\documentclass[twocolumn]{autart}
\usepackage[bookmarks=false]{hyperref}
\usepackage{graphicx}
\usepackage{subcaption}
\usepackage{amsmath}
\usepackage{amssymb}
\usepackage{mathtools}
\usepackage{comment}
\usepackage{dsfont}
\usepackage{graphicx}
\usepackage{xcolor}
\newenvironment{remark}{
  \par\smallskip
  \noindent\textbf{Remark.}\ \normalfont
}{
  \par\smallskip
}
\newtheorem{theorem}{Theorem}
\newtheorem{lemma}{Lemma}
\newtheorem{proposition}{Proposition}
\theoremstyle{remark}
\newtheorem{assumption}{Assumption}
\newtheorem{problem}{Problem}
\begin{document}
\newcommand{\R}{\mathbb{R}}
\newcommand{\I}{\mathcal{I}}
\newcommand{\J}{\mathcal{J}}
\newcommand{\0}{\mathbf{0}}
\newcommand{\mat}{\mathrm{mat}}
\newcommand{\tr}{\mathrm{tr}}
\newcommand{\col}{\mathrm{col}}
\newcommand{\blkdiag}{\operatorname{blkdiag}}
\newcommand{\BP}{\noindent{\bf Proof. }}
\newcommand{\EP}{\hspace*{\fill} $\blacksquare$\smallskip\noindent}
\newcommand{\BPof}[1]{\noindent\textbf{Proof of #1. }}
\setlength{\abovedisplayskip}{10pt plus 2pt minus 2pt}
\setlength{\belowdisplayskip}{10pt plus 2pt minus 2pt}
\setlength{\abovedisplayshortskip}{2pt plus 2pt}
\setlength{\belowdisplayshortskip}{3pt plus 2pt minus 2pt}
\overfullrule=5pt

\begin{frontmatter}

\title{Distributed Stability Certification and Control\\ from Local Data\thanksref{footnoteinfo}}

\thanks[footnoteinfo]{The work of authors is supported by NWO grant OCENW.KLEIN.257.}

\author[Address]{Surya Malladi}\ead{v.s.p.malladi@rug.nl},
\author[Address]{Nima Monshizadeh}\ead{n.monshizadeh@rug.nl}

\address[Address]{ENTEG, University of Groningen}  
          
\begin{keyword}                           
Data-driven control, Distributed computation, 
Data-based stability certificates, Linear-Quadratic Regulator
\end{keyword}

\begin{abstract}
Most data-driven analysis and control methods rely on centralized access to system measurements. In contrast, we consider a setting in which the measurements are distributed across multiple agents and raw data are not shared. Each agent has access only to locally held samples, possibly as little as a single measurement, and agents exchange only locally computed signals. Consequently, no individual agent possesses sufficient information to identify the entire system or synthesize a controller independently. To address this limitation,
we develop distributed dynamical algorithms that enable the agents to collectively compute global system certificates from local data. Two problems are addressed. First, for stable linear time-invariant (LTI) systems, the agents compute a Lyapunov certificate by solving the Lyapunov equation in a fully distributed manner. Second, for general LTI systems, they compute the stabilizing solution of the algebraic Riccati equation and hence the optimal linear-quadratic regulator (LQR). An initially proposed scheme guarantees practical convergence, while a subsequent augmented PI-type algorithm achieves exact convergence to the desired solution. We further establish robustness of the resulting LQR controller to uncertainty and measurement noise. The approach is illustrated through distributed Lyapunov certification of a quadruple-tank process and distributed LQR design for helicopter dynamics.
\end{abstract}
\end{frontmatter}

\section{Introduction}
Data-driven control has emerged as a powerful alternative to classical model-based approaches, offering the ability to synthesize controllers directly from collected data. In contrast to conventional methods, which rely on first identifying a parametric model of the system and subsequently designing a controller, data-driven methods seek to design control policies directly from measured input-state/output behavior. {This paradigm is particularly attractive for complex or poorly modeled systems, where accurate system identification is difficult, costly, or infeasible.}

Within the large body of literature on data-driven methods, 
we mention 
virtual reference feedback tuning \cite{campi2002virtual}, data-enabled predictive control (DeePC) \cite{coulson2019data}, policy gradient methods \cite{hu2023toward,zhao2023data}, and data-based semidefinite programs \cite{de2019formulas,van2020data,markovsky2021behavioral}.  Such methods provide a streamlined framework for controller synthesis, and have been extended to cover stochastic, {time-varying,} nonlinear, and robust settings \cite{bianchin2023online,nortmann2023direct,dai2023data,berberich2020data}. However, a common and often implicit assumption in nearly all these methods is that the data is centrally available stored in one place and entirely accessible by the designer. While centralization simplifies analysis and control design, its limitations become apparent in certain modern scenarios, as outlined below.

\textit{Motivation and Challenges.} 
Modern engineered systems generate large volumes of data that are collected and stored across multiple locations rather than in a single repository. This non-centrality arises because measurements may stem from different experimental runs, be owned by distinct agents, or be distributed across subsystems of a large-scale network. Operational, organizational, and regulatory constraints often prevent pooling such data, particularly in the absence of a trusted coordinating entity, as exemplified by emerging data markets \cite{zheng2022fl,baghcheband2024machine}. Similar restrictions appear in privacy-preserving paradigms such as federated learning, where raw measurements remain local and only processed information is exchanged. Even when technically feasible, centralization can be impractical due to the substantial storage and memory demands associated with aggregating large datasets at a single location. Moreover, centralized repositories pose privacy and security risks, as measurements may reveal sensitive structural and behavioral information that can be exploited to launch data-driven attacks \cite{li2019optimal,gao2023data} or to identify models enabling model-based attacks \cite{teixeira2015secure,pasqualetti2013attack}.

For control systems, this lack of centralized access poses a fundamental difficulty. Existing data-driven analysis and controller synthesis methods implicitly assume that all available measurements can be accessed and processed centrally. When measurements are distributed across agents and raw data are not shared, no single entity has sufficient information to identify the system or compute a data-driven controller independently. These considerations motivate control synthesis methods that operate without centralized data access. In this work, we develop distributed solutions to data-driven control problems relying only on locally available information.
In particular, we consider a setting in which measurements are held across multiple agents and raw data are not shared. Each agent has access only to a subset of data samples, a single sample in the extreme case, and communicates with neighboring agents through locally computed variables. Due to the lack of sufficient information for any individual agent, the objective is therefore to perform analysis and control tasks collectively from locally held data via distributed interaction. 

\textit{Related work.}   
A few interesting directions in distributed data-driven control share surface similarities with our problem but rely on assumptions that differ fundamentally from the setting addressed here.
First, methods based on data-driven LMI conditions (e.g., \cite{eising2022informativity})
assume that complete input–state trajectories are centrally available, and distribution enters only through structural constraints on the controller rather than through decentralized access to data.
Second, distributed MPC and System-Level Synthesis approaches (e.g., \cite{alonso2022data}) require each subsystem to possess long local trajectories satisfying persistence-of-excitation conditions and typically involve sharing trajectory data or derived local responses during optimization, which presupposes significantly richer local information than is available in our setting. Similarly, the design of distributed controllers within a differential games framework is studied in \cite{cappello2021distributed}, where each subsystem is assumed to have access to its own complete state trajectories as well as those of its neighboring subsystems.
Third, recent distributed data-driven optimal control schemes (e.g., \cite{celi2023distributed}) also use full-length experimental trajectories per agent and employ iterative projection where intermediate high-dimensional variables are exchanged, again assuming data availability and communication patterns incompatible with sample-wise fragmented data. In contrast, the setting studied in this paper assumes that each agent holds only minimal local information, exemplified by a single input–state sample and that raw data cannot be shared, requiring a fundamentally different approach to the aforementioned data-driven control works.

Our analysis is also related to blended dynamics theory, previously used in model-based distributed control, e.g., decentralized synchronization \cite{lee2020tool} and distributed stabilization \cite{kim2023decentralized}. Here we extend its use to a data-driven setting in which the system model is unknown and only locally held measurements are available.

\textit{Contributions.} {We consider a linear time-invariant  system for which the available measurements are distributed across multiple agents. Each agent has access only to limited locally held samples and raw measurements are not shared.
The contributions of this paper are threefold:

First, we provide a distributed data-driven method in which each agent locally computes a low-rank share of the unknown system matrix using only local data and neighbor communication.
    
Second, the agents compute a quadratic Lyapunov function serving as a stability certificate for the system. We propose two distributed dynamical algorithms: the first guarantees practical convergence, while the second incorporates a PI-type augmentation that achieves exact convergence to the Lyapunov solution. Both algorithms are based on a distributed dynamical formulation of the Lyapunov equation, in which each agent’s local share enters its update dynamics.
    
Third, the agents collectively compute the linear-quadratic regulator (LQR) for the system. We develop two distributed dynamical algorithms for this purpose: one achieves practical convergence, while a PI-augmented variant attains exact convergence. The algorithms are based on a distributed (nonlinear) differential Riccati equations, in which each agent’s local share enters the update dynamics and agents exchange only locally constructed signals with their neighbors. We further establish robustness of the resulting controller to model uncertainty and measurement noise.

Together, the proposed methods enable stability certification and optimal LQR synthesis from locally held samples without centralized data access.

{The remainder of the paper is organized as follows. Section \ref{sec: Problem statement} formulates the problems of interest. Section \ref{sec: Data distribution scheme and partial system identification} introduces the data distribution scheme and the distributed computation of low-rank shares of the system matrix. Section \ref{sec: Dist sol to LE} presents distributed algorithms for solving the Lyapunov equation from locally held measurements and neighboring communication, while Section \ref{sec: Dist sol to ARE} develops the corresponding algorithms for computing the LQR controller. Robustness to uncertainty and measurement noise is analyzed in Section \ref{sec: Robustness}. Section \ref{sec: Numerical simulation} provides illustrative case studies, including distributed Lyapunov certification for a quadruple-tank process and distributed LQR design for helicopter dynamics. Concluding remarks are given in Section \ref{sec:concl}. Auxiliary results and some technical proofs are deferred to the Appendix.}

\section{Notations and preliminaries}
The $N$-dimensional vector of all ones is denoted by 
$\mathds{1}_N$. The minimum and maximum singular values of a matrix $A$ are denoted by $\sigma_{\rm min}(A)$ and $\sigma_{\rm max}(A)$, respectively. The $2-$norm of $A$ is denoted by $||A||$. 
The notation $\mathrm{vec}(\cdot)$ is used to denote the vectorization of a matrix. Its inverse operation i.e., matricization to a $n \times m$ matrix of a vector is denoted by $\mat_{n,m}(\cdot)$, with the indices dropped for square matrices. We use $``*$'' to denote the symmetric completion of a block matrix. 
We denote the Kronecker product of two matrices $A$ and $B$ by $A \otimes B$. For matrices $A,B,C$ of appropriate dimensions, the following property holds
$
\mathrm{vec}(ABC)=(C^T \otimes A)\mathrm{vec}(B).
$
The trace of a matrix denoted by $\tr(\cdot)$ satisfies the following property:
\begin{equation}\label{eqn: Cyclic property of trace}
    \tr(ABC)=\tr(BCA)=\tr(CAB).
\end{equation}
The $H_2$ norm of a linear time-invariant system associated with the input-state-output matrices $(A, B, C)$
is given by \cite[Chapter 4]{khalil1996robust}
\begin{equation}\label{eqn: H2 norm}
    ||G||_2=(\tr(B^TW_oB))^{1/2}=(\tr(CW_cC^T))^{1/2},
\end{equation}
where $G(s):=C(sI-A)^{-1}B$, and $W_o$ and $W_c$ 
are, respectively, the observability and controllability Gramians that can be obtained as the solutions to the Lyapunov equations
\[
 A^T W_o + W_o A + C^T C = 0, \; A W_c + W_c A^T + B B^T = 0.
\]
\section{Problem statement}\label{sec: Problem statement}
We investigate two notable data-driven analysis and control problems. The first problem considers a stable linear system and finds a Lyapunov certificate in a distributed fashion. In the second problem, the system is potentially unstable, and optimal stabilizing controllers are distributedly designed.
We emphasize the key feature of this work that neither storage of the data nor computation over the data requires a central location or a central processor. 

To formalize the first problem, consider a continuous-time linear time-invariant (LTI) system with no input
\begin{equation}\label{eqn: LTIH}
    \dot{x}=Ax,
\end{equation}
where the state matrix $A\in \R^{n \times n}$ is unknown. 
The data samples $\{x(t_i), r(t_i)\}$ consist of 
measured state data and rate of change of state data at time $t_i$, with $i \in \I:=\{1,2,\ldots,N\}$.
As mentioned, we work under a {\em fragmented data} regime where data is not centrally available, but spread across $\hat N$ local databases $\mathcal{D}_j$, each accessible to a computing agent $j\in \mathcal{J}:=\{1, \ldots, \hat N\}$.  Without loss of generality, we discuss the result in the maximally fragmented scenario where each agent has only access to a single data sample, i.e. $\mathcal{D}_i=
\{x(t_i), r(t_i)\}$  and $\I=\mathcal{J}$.

We aim to solve the Lyapunov equation for system \eqref{eqn: LTIH}:
\begin{equation}\label{eqn: LE}
    A^TP+PA+Q=0,
\end{equation}
for a given $Q>0$.
\begin{problem}\label{prb: Lyapunov function for a stable system}
Let $A$ be Hurwitz. Given the matrix $Q$ and the data $\{x(t_i),r(t_i)\}$ available to each agent $i\in \I$,
design a distributed data-driven algorithm to find the Lyapunov certificate $P$ in  \eqref{eqn: LE}.
\end{problem}

Next, consider a linear system with input 
\begin{equation}\label{eqn: LTI}
    \dot{x}=Ax+Bu,
\end{equation}
where $B \in \R^{n \times m}$ is the input matrix, and $(A, B)$ is stabilizable.
We aim to design an optimal controller that is able to stabilize \eqref{eqn: LTI}. In particular, we are interested in linear quadratic regulators, which are obtained using the 
solutions of the following algebraic Riccati equation (ARE) 
\begin{equation}\label{eqn: ARE}
    A^TP+PA+Q-PBR^{-1}B^TP=0,
\end{equation}
where $Q, R>0$.
Let $P^*$ denote the unique positive definite solution to the ARE. It is well-known that the controller gain given by \begin{equation}\label{eqn: Optimal state feedback}
    K^*=-R^{-1}B^TP^*,
\end{equation}
stabilizes the system and minimizes the expected LQR cost 
\[
J(K)= \mathbb{E} \left[\int_0^{\infty} x^T(t)(Q+K^TRK)x(t)dt\right],
\]
where the expectation is taken with respect to  the initial state $x_0 \in \R^n$, assumed to be zero-mean random vector satisfying $\mathbb{E}[x_0x^T_0]=I_n$.
We assume that the input matrix $B$ is known, whereas the system matrix $A$ is unknown. We discuss a partial relaxation of this assumption in Subsection \ref{subsec: Uncertainty}.

\begin{problem}\label{prb: LQR for LTI}
    Given the matrices $B, Q,R$ and the data $\{x(t_i),r(t_i), u(t_i)\}$ available to each agent $i\in \I$,
design a distributed data-driven algorithm to compute the optimal control gain $K^*$ in \eqref{eqn: Optimal state feedback}.
\end{problem}

As for the data, we impose the following assumption
\begin{assumption}\label{asm: Full row rank of X_0}   
It holds that
 $\mathrm{rank}\,X_0=n,$
 where
 \begin{align}\label{eqn: State data notation}
    \begin{split}
        X_0 &:= \begin{bmatrix}
            x(t_1) & x(t_2) & \cdots & x(t_N)
        \end{bmatrix}.
    \end{split}
\end{align}
\end{assumption}
\section{Data-based splitting scheme}\label{sec: Data distribution scheme and partial system identification}
As the first step to tackle the formulated problems, we devise a scheme which splits the unknown matrix $A$ into $N$ (low-rank) shares $A_i$, such that  $A=\sum_{i=1}^N A_i$, and each $A_i$ is available to agent $i$. \\
Assuming noiseless measurements, for system \eqref{eqn: LTIH}, we have
\begin{equation}\label{eqn: Subspace relations for LTIH}
    r(t_i)=Ax(t_i),
\end{equation}
and for \eqref{eqn: LTI} we have
\begin{equation}\label{eqn: Subspace relations}
    r(t_i)=Ax(t_i)+Bu(t_i),
\end{equation}
for each $i\in \I$.
For convenience, we continue working with the latter equation \eqref{eqn: Subspace relations} in this section, noting that \eqref{eqn: Subspace relations for LTIH} corresponds to a special case where $u(t_i)=0$, $\forall i$.
Defining 
\begin{equation}\label{eqn: Def r_hat}
    \hat{r}(t_i):=r(t_i)-Bu(t_i)
\end{equation} 
for each $i \in \I,$
observe that the system matrix $A$ satisfies
\begin{subequations}\label{eqn: Equations for estimation of A}
    \begin{equation}\label{eqn: Equation of A}
        A= \sum_{i=1}^N \hat{r}(t_i) y^T_i,    
        \end{equation}
        \text{ for any set of vectors $\{y_i\}_{i \in \I}$ such that }
    \begin{equation}\label{eqn: Right inverse of data matrix}
        I_n= \sum_{i=1}^N x(t_i)y^T_i
    \end{equation}
\end{subequations}
Note that Assumption \ref{asm: Full row rank of X_0} guarantees the existence of such $y_i$, with $i\in \I$.

The constraint \eqref{eqn: Right inverse of data matrix} can be treated as a distributed resource allocation problem, and can be solved using a distributed algorithm. 
We have provided such an algorithm in Appendix \ref{sec: Appendix A}, where the vectors/matrices in \eqref{eqn: Distributed right inverse problem statement}, can be tailored to \eqref{eqn: Right inverse of data matrix} by choosing $v(i)=x(t_i), w(i)=y_i,$ and $M=I_n$. As a result, each agent $i\in \I$ can compute a vector $y_i$ such that the resulting set of computed vectors satisfy \eqref{eqn: Right inverse of data matrix}. 
The distributed computation of the vectors $\{y_i\}_{i\in \I}$ enables each agent $i$ to compute the term 
\begin{equation}\label{eqn: Distributed equation of A}
    A_i:=\hat{r}(t_i)y^T_i,
\end{equation} 
which precisely coincides with one of the $N$ terms appearing in the summation \eqref{eqn: Equation of A}. As a result, $A=\sum_{i=1}^N A_i$, where $A_i$ given by \eqref{eqn: Distributed equation of A} is a data-dependent matrix available to agent $i$. 
\section{Distributed algorithms to find a Lyapunov certificate }\label{sec: Dist sol to LE}
Following the splitting scheme performed in Section \ref{sec: Data distribution scheme and partial system identification}, Problem \ref{prb: Lyapunov function for a stable system} reduces to 
designing a distributed algorithm to compute the unique solution $P^*$ to the Lyapunov equation 
\begin{equation}\label{eqn: Distributed LE}
    \left(\sum_{i=1}^N A^T_i\right) P + P \left(\sum_{i=1}^NA_i\right) + Q = 0,
\end{equation}
where the data-dependent matrix $A_i$ in \eqref{eqn: Distributed equation of A} is available to agent $i$, for each $i\in \I$. 
The Lyapunov equation \eqref{eqn: Distributed LE} can be solved centrally using the associated differential Lyapunov equation (DLE), i.e.:
\begin{equation}\label{eqn: DLE}
        \dot{P}=A^T P + P A + Q.
\end{equation}
It is known that solutions of the DLE globally exponentially converge to the unique equilibrium $P^*$ of the DLE  which satisfies the associated Lyapunov equation \eqref{eqn: LE}.

Next, we provide a distributed algorithm that solves Problem \ref{prb: Lyapunov function for a stable system}, with \textit{practical} convergence guarantees. 
To this end, we use the following result from the theory of blended dynamics.
\begin{lemma}\cite{jiang2021trends}\label{lem: Blended dynamics}
    Consider a networked system, whose dynamics are given by 
    \begin{equation}\label{eqn: MA gen}
        \dot{x}_i = f_i(t,x_i)+\gamma \sum_{j \in \mathcal{N}_i} (x_j-x_i),~~ i \in \I,
    \end{equation}
    where $\mathcal{N}_i$ is the set of in-neighbours of agent $i$.
    The averaged dynamics or blended dynamics are given by 
    \begin{equation}\label{eqn: Blended gen}
        \dot{s}=\frac{1}{N}\sum_{i=1}^{N}f_i(t,s).
    \end{equation} 
    Assume that the equilibrium $s^*$ is uniformly asymptotically stable for the blended dynamics \eqref{eqn: Blended gen}, and let $\mathcal{D}_b$ be an open subset of the domain of attraction of $s^*$. Define 
    \begin{equation*}\label{eqn: Domain of attraction gen}
        \mathcal{D}_x:=\{\mathds{1}_N \otimes s + w: s \in \mathcal{D}_b, w \in \R^{nN}, (\mathds{1}^T_N \otimes I_n)w=0\}.
    \end{equation*}
    Then, for any compact set $\mathcal{K} \subset \mathcal{D}_x \subset \R^{nN}$ and for any $\epsilon > 0$, there exists $\gamma^*>0$ such that, for each $\gamma>\gamma^*$ and $\col(x_1(t_0),\ldots,x_N(t_0))) \in \mathcal{K}$, the trajectories of \eqref{eqn: MA gen} satisfy 
    $\limsup_{t \to \infty} ||x_i(t)-x_j(t)|| \leq \epsilon,$ and 
         $\limsup_{t \to \infty} ||x_i(t)-s^*||\leq \epsilon,$ for all $i,j \in \mathcal{N}$.
\end{lemma}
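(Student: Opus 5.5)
The plan is a singular-perturbation argument that separates a fast disagreement coordinate from a slow average. The graph is assumed strongly connected and balanced (otherwise the plain average in \eqref{eqn: Blended gen} would have to be replaced by a weighted one built from the left Perron vector of the Laplacian). Stack $x=\col(x_1,\ldots,x_N)$ and let $L$ be the Laplacian, so the network \eqref{eqn: MA gen} reads $\dot x = F(t,x) - \gamma (L\otimes I_n)x$ with $F=\col(f_1,\ldots,f_N)$. Introduce the average $s=\frac1N(\mathds{1}_N^T\otimes I_n)x$ and the disagreement $w=x-\mathds{1}_N\otimes s$, which satisfies $(\mathds{1}^T_N\otimes I_n)w=0$; this is exactly the parametrization used to define $\mathcal{D}_x$. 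Balancedness gives $\mathds{1}^T_N L=0$, so
\[
\dot s=\tfrac1N\textstyle\sum_i f_i(t,s+w_i),\qquad \dot w=-\gamma(L\otimes I_n)w+\Pi F(t,\mathds{1}_N\otimes s+w),
\]
where $\Pi=(I_N-\frac1N\mathds{1}_N\mathds{1}_N^T)\otimes I_n$. On the subspace $\{(\mathds{1}^T\otimes I)w=0\}$, $L\otimes I_n$ has all eigenvalues with real part at least $\lambda_2>0$. Hence $V_w=\|w\|^2$ (or a weighted version with the Lyapunov matrix of the restricted $-L$) satisfies $\dot V_w\le -2\gamma c\,V_w+2\|w\|\,\|\Pi F\|$.

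The key steps, in order, are as follows.

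\emph{Step 1: Lyapunov function for the blended dynamics.} Uniform asymptotic stability of $s^*$ for \eqref{eqn: Blended gen}, with $\mathcal{D}_b$ inside its basin, yields by a converse Lyapunov theorem a function $V_s(t,s)$ on $\mathcal{D}_b$ with class-$\mathcal{K}$ bounds, $\dot V_s\le -\alpha(\|s-s^*\|)$ along the blended flow, and locally bounded gradient. Here $f_i$ is assumed locally Lipschitz in $x$ uniformly in $t$.

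\emph{Step 2: compact sets and a bound on $F$.} Fix the compact $\mathcal{K}$. Choose a sublevel set $\Omega_c=\{V_s\le c\}\subset\mathcal{D}_b$ containing the $s$-projections of $\mathcal{K}$, and a ball $\|w\|\le \rho$ containing their $w$-projections. On $\Omega_{c}\times\{\|w\|\le\rho+1\}$, $F$ is bounded by some $M$ and Lipschitz with constant $\ell$.

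\emph{Step 3: perturbed slow dynamics.} Along solutions, $\dot V_s\le-\alpha(\|s-s^*\|)+\ell'\|w\|$, because $\frac1N\sum_i f_i(t,s+w_i)$ differs from the blended vector field by at most $\ell\|w\|$.

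\emph{Step 4: fast decay of $w$.} From the $w$-inequality, $\|w(t)\|\le e^{-\gamma c t}\rho+M/(\gamma c)$ for as long as the state stays in the set of Step 2. The transient term decays in time $O(1/\gamma)$, during which $s$ moves by only $O(M/\gamma)$. So for $\gamma$ large, $s$ stays in $\Omega_{c+1}$, and afterwards $\|w\|\le M/(\gamma c)$.

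\emph{Step 5: conclusion.} With $\|w\|\le \delta:=M/(\gamma c)$, Step 3 gives $\dot V_s<0$ outside the set where $\alpha(\|s-s^*\|)\le \ell'\delta$. Therefore $\limsup\|s-s^*\|\le\alpha^{-1}(\ell'\delta)$, and since $\|w_i\|\le\delta$, both $\limsup\|x_i-s^*\|$ and $\limsup\|x_i-x_j\|$ are $O(\delta)+\alpha^{-1}(\ell'\delta)$. Choosing $\gamma^*$ so that this is at most $\epsilon$ finishes the argument.

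The main obstacle is the invariance and bootstrapping in Step 4. The bounds $M,\ell$ hold only on the compact region, so I would argue by contradiction: suppose a first exit time exists, and then show that for $\gamma$ large enough neither $V_s$ reaches $c+1$ nor $\|w\|$ reaches $\rho+1$ before then. This uses the uniform bounds over $t$ and the fact that the initial transient of $w$ has length $O(\log(\rho\gamma)/\gamma)$. Care is needed that $V_s$ is uniform in $t$ for non-autonomous $f_i$, which the uniform asymptotic stability hypothesis supplies.
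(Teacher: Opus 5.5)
The paper gives no proof of this lemma; it imports it without proof from the blended-dynamics reference it cites. Your argument is essentially the standard proof from that literature and is correct: the average/disagreement split, a converse Lyapunov function for the blended dynamics, high-gain decay of $w$ on the subspace $(\mathds{1}_N^T\otimes I_n)w=0$, a first-exit bootstrap, and ultimate boundedness. You are also right that a connectivity assumption (undirected or balanced) and local Lipschitz regularity of the $f_i$, uniform in $t$, have to be added to the hypotheses as stated.

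A few details to tighten, none of which changes the conclusion:
\begin{itemize}
\item The converse Lyapunov function should be defined on the whole domain of attraction and be proper there, i.e.\ bounded above and below in terms of a proper indicator function, uniformly in $t$. Class-$\mathcal{K}$ bounds in $\|s-s^*\|$ alone do not give this. Properness is what guarantees that some compact sublevel set $\Omega_{c+1}$ contains the $s$-projection of $\mathcal{K}$; note that $\Omega_c\subset\mathcal{D}_b$ is neither needed nor generally achievable.
\item The constants $M$ and $\ell$ should be taken on $\Omega_{c+1}$, not on $\Omega_c$.
\item In Step~5, with $\alpha_1,\alpha_2$ the lower and upper comparison functions of $V_s$ near $s^*$, the ultimate bound is $\alpha_1^{-1}\circ\alpha_2\circ\alpha^{-1}(\ell'\delta)$ rather than $\alpha^{-1}(\ell'\delta)$.
\end{itemize}
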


We aim to design a dynamical algorithm in the form of \eqref{eqn: MA gen} whose  averaged dynamics \eqref{eqn: Blended gen} coincide with the vectorized form of the DLE in \eqref{eqn: DLE}. The agents are assumed to communicate over an undirected, connected graph. 
The following result addresses Problem \ref{prb: Lyapunov function for a stable system}. 

\begin{theorem}\label{thm: Dist alg for prac conv LE}
Let each agent $i\in \I$ have access to the data-dependent rank one matrix $A_i$ in \eqref{eqn: Distributed equation of A}, and employ the distributed dynamics:
\begin{equation}\label{eqn: Distributed DLE with coupling}
    \dot{P}_i=NA^T_iP_i + NP_iA_i + Q + \gamma \sum_{j \in \mathcal{N}_i} (P_j-P_i),
\end{equation}
with any arbitrary initial condition $P_i(0),~ i \in \I$.
Then, for any $\epsilon>0$, there exists a coupling gain $\gamma^*>0$ such that for every $\gamma > \gamma^*$, the solutions of the distributed dynamical algorithm \eqref{eqn: Distributed DLE with coupling} satisfy $\lim_{t \to \infty} ||P_i(t)-P^*|| \leq \epsilon$, where $P^*$ is the unique solution to \eqref{eqn: LE}. 
\end{theorem}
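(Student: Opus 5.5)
The plan is to vectorize \eqref{eqn: Distributed DLE with coupling} and recognize it as an instance of \eqref{eqn: MA gen}, so that Lemma~\ref{lem: Blended dynamics} applies. Setting $p_i:=\mathrm{vec}(P_i)\in\R^{n^2}$ and using $\mathrm{vec}(ABC)=(C^T\otimes A)\mathrm{vec}(B)$, each agent's dynamics read
\[
\dot p_i = f_i(p_i) + \gamma\sum_{j\in\mathcal{N}_i}(p_j-p_i),\qquad f_i(p):=N\big(I_n\otimes A_i^T + A_i^T\otimes I_n\big)p+\mathrm{vec}(Q).
\]
This is time-invariant and affine, and it has exactly the diffusive coupling structure of \eqref{eqn: MA gen} over the undirected connected graph.

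Next I will compute the blended dynamics \eqref{eqn: Blended gen}. Averaging gives
\[
\dot s=\frac1N\sum_{i=1}^N f_i(s)=\Big(I_n\otimes \textstyle\sum_i A_i^T+\sum_i A_i^T\otimes I_n\Big)s+\mathrm{vec}(Q).
\]
Since $\sum_i A_i=A$ by the splitting of Section~\ref{sec: Data distribution scheme and partial system identification}, this is $\dot s=(I_n\otimes A^T+A^T\otimes I_n)s+\mathrm{vec}(Q)$, which is exactly the vectorized DLE \eqref{eqn: DLE}. The factor $N$ in \eqref{eqn: Distributed DLE with coupling} is there precisely to cancel the $1/N$ from averaging.

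I then need uniform global asymptotic stability of the equilibrium $s^*=\mathrm{vec}(P^*)$. The matrix $\mathcal{A}:=I_n\otimes A^T+A^T\otimes I_n$ is the Kronecker sum of $A^T$ with itself, so its eigenvalues are $\lambda_k+\lambda_l$ for eigenvalues $\lambda_k,\lambda_l$ of $A$. Because $A$ is Hurwitz, all of these have negative real part, so $\mathcal{A}$ is Hurwitz and invertible. Hence $s^*=-\mathcal{A}^{-1}\mathrm{vec}(Q)$ is the unique equilibrium, and it coincides with $\mathrm{vec}(P^*)$ for the unique solution of \eqref{eqn: LE}. The system is LTI, so $s^*$ is globally exponentially, and in particular uniformly asymptotically, stable. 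The domain of attraction is all of $\R^{n^2}$, so I take $\mathcal{D}_b=\R^{n^2}$. Then $\mathcal{D}_x=\R^{n^2N}$, because any vector decomposes as $\mathds{1}_N\otimes s+w$ with $s$ the average and $w$ of zero mean.

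Finally I apply Lemma~\ref{lem: Blended dynamics} with the compact set $\mathcal{K}$ chosen as any ball containing the given initial conditions $\col(p_1(0),\ldots,p_N(0))$. This yields $\gamma^*$ such that for all $\gamma>\gamma^*$ we have $\limsup_{t\to\infty}\|p_i(t)-s^*\|\le\epsilon$. Since $\|P_i-P^*\|\le\|P_i-P^*\|_F=\|p_i-s^*\|$, the same bound holds in the matrix $2$-norm, which gives the claim (reading ``$\lim$'' as $\limsup$).

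The main subtlety is that the lemma's $\gamma^*$ depends on the compact set $\mathcal{K}$. A literal reading of ``any arbitrary initial condition'' with a single $\gamma^*$ would therefore need a semiglobal-to-global upgrade. I would address this by exploiting linearity. Decompose $p=\mathds{1}\otimes\bar p+w$ and write the error dynamics. The disagreement $w$ is driven by $-\gamma(L\otimes I)w$ plus terms linear in $(w,\bar p-s^*)$ and a constant bias $\mathcal{O}(\max_i\|NA_i-A\|\,\|s^*\|)$. A composite quadratic Lyapunov function then shows that for $\gamma$ large the homogeneous part is exponentially stable independently of the initial condition. The remaining steady-state offset is $\mathcal{O}(1/\gamma)$ and is also independent of the initial condition, so one $\gamma^*$ works for all initial data.
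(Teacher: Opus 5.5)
Your proposal is correct and follows the paper's proof: both vectorize \eqref{eqn: Distributed DLE with coupling}, observe that the factor $N$ makes the blended dynamics coincide with the DLE \eqref{eqn: DLE}, use that its equilibrium $\mathrm{vec}(P^*)$ is globally exponentially stable, and then invoke Lemma~\ref{lem: Blended dynamics} with $\mathcal{D}_b$ equal to the whole space. Your treatment is more careful than the paper's, which takes $\mathcal{K}=\mathcal{D}_x$ to be the entire, non-compact state space; you instead take a compact ball around the given initial condition, and your affine-structure argument (Hurwitz closed-loop matrix for large $\gamma$, equilibrium offset $\mathcal{O}(1/\gamma)$ independent of the initial data) yields a single $\gamma^*$ valid for all initial conditions and shows that the limit in the statement actually exists.
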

\BP
The averaged dynamics of \eqref{eqn: Distributed DLE with coupling} is equal to 
\begin{equation*}
    \dot{S}=\left(\sum_{i=1}^NA^T_i\right)S+S\left(\sum_{i=1}^N A_i\right)+Q,
\end{equation*}
which coincides with the differential Lyapunov equation \eqref{eqn: DLE}. Therefore, their respective vectorized versions coincide with each other. The practical convergence follows directly from Lemma \ref{lem: Blended dynamics} with the definitions $s^*=\mathrm{vec}(P^*)$, $\mathcal{D}_b=\R^{N}$, and  $\mathcal{K}=\mathcal{D}_x=\R^{nN}$. The choices of $\mathcal{D}_b$  and $\mathcal{D}_x$ are aided by the fact that $\R^n$ is the region of attraction of \eqref{eqn: DLE}.
\EP

Theorem \ref{thm: Dist alg for prac conv LE} provides a practical solution to Problem \ref{prb: Lyapunov function for a stable system}, and the convergence error $\epsilon$ can  be brought down to predefined level, by increasing the coupling gain $\gamma$.
Nevertheless, the dynamics \eqref{eqn: Distributed DLE with coupling} would never return a zero error.   
To see this, note that any synchronized solution $P_i=P_j$, $\forall i,j \in \mathcal{I}$, is inconsistent with the equilibrium of \eqref{eqn: Distributed DLE with coupling}. 
In particular, the 
Lyapunov equation 
\begin{equation}\label{eqn: Averaged DLE dynamics with synchronization}
   0= NA_i^T\bar P+ N\bar PA_i+Q
\end{equation}
does not hold for any $\bar P>0$, bearing in mind that $A_i$ has rank $1$ and thus is not Hurwitz.
Motivated by this fact, and inspired by the results from \cite{kim2023decentralized}, we augment the algorithm by an additional PI-type dynamics, which leads to 
\begin{subequations}\label{eqn: Distributed DLE with PI coupling}
        \begin{align}
        \nonumber\dot{P}_i&=NA^T_iP_i + NP_iA_i + Q + \gamma \sum_{j \in \mathcal{N}_i} (P_j-P_i) \\[-2mm]&\qquad+ \gamma \sum_{j \in \mathcal{N}_i} (Y_j-Y_i), \label{eqn: P_dot in DLE PI alg}\\
        \dot{Y}_i&=-\gamma \sum_{j \in \mathcal{N}_i} (P_j-P_i). \label{eqn: Y_dot in DLE PI alg}
        \end{align} 
    \end{subequations}
Next, we have the following result.
\begin{theorem}\label{thm: Asymptotic convergence to solution of LE}
    Let each agent $i \in \I$ have access to the data-dependent rank one matrix $A_i$ in \eqref{eqn: Distributed equation of A}, and employ the distributed dynamics \eqref{eqn: Distributed DLE with PI coupling}, with any arbitrary initial condition $(P_i(0),Y_i(0))~ i \in \I$. 
    Then, there exists a coupling gain $\gamma^*>0$ such that for any $\gamma > \gamma^*$, the solutions of the distributed dynamical algorithm \eqref{eqn: Distributed DLE with PI coupling} satisfy $\lim_{t \to \infty} P_i(t)= P^*$, where $P^*$ is the unique solution to the Lyapunov equality in \eqref{eqn: LE}.
\end{theorem}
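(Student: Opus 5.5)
The dynamics \eqref{eqn: Distributed DLE with PI coupling} are linear, so I would write them in stacked vectorized form and show that the system matrix is Hurwitz on the relevant subspace for large $\gamma$. Let $p_i=\mathrm{vec}(P_i)$, $y_i=\mathrm{vec}(Y_i)$, $q=\mathrm{vec}(Q)$, and $F_i:=N(I_n\otimes A_i^T + A_i^T\otimes I_n)$, so that $\mathrm{vec}(NA_i^TP_i+NP_iA_i)=F_ip_i$. Stacking with $p=\col(p_1,\ldots,p_N)$, $y=\col(y_1,\ldots,y_N)$, $F=\blkdiag(F_i)$, and $L$ the graph Laplacian, we get
\[
\dot p = Fp + \mathds{1}_N\otimes q - \gamma(L\otimes I)(p+y),\qquad \dot y=\gamma (L\otimes I)p .
\]

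\textbf{Step 1 (equilibrium).} The quantity $(\mathds{1}^T_N\otimes I)y$ is invariant, so I would restrict to the affine set where it equals its initial value. On this set I would verify that there is an equilibrium with $p_i=\mathrm{vec}(P^*)$ for all $i$. With $p=\mathds{1}\otimes p^*$, the $\dot y$ equation vanishes. The $\dot p$ equation then requires $(L\otimes I)y=Fp+\mathds{1}\otimes q$. Summing the right-hand side over agents gives $\sum_iF_ip^*+Nq=N\,\mathrm{vec}(A^TP^*+P^*A+Q)=0$, so it lies in $\mathrm{range}(L\otimes I)$ and a suitable $y^*$ exists; it is unique modulo $\mathds{1}\otimes(\cdot)$, which is pinned down by the invariant.

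\textbf{Step 2 (error coordinates).} Define the errors $\tilde p=p-\mathds{1}\otimes p^*$ and $\tilde y=y-y^*$, where $\tilde y$ lies in the disagreement subspace. Take an orthogonal basis $[\mathds{1}/\sqrt N,\;R]$ and decompose $\tilde p=\mathds{1}\otimes \bar p/\sqrt N + (R\otimes I)\hat p$ and $\tilde y=(R\otimes I)\hat y$. The resulting linear system is
\[
\begin{bmatrix}\dot{\bar p}\\ \dot{\hat p}\\ \dot{\hat y}\end{bmatrix}
=\left(\begin{bmatrix}\bar F & * & 0\\ * & * & 0\\ 0&0&0\end{bmatrix}
+\gamma\begin{bmatrix}0&0&0\\0&-\Lambda&-\Lambda\\0&\Lambda&0\end{bmatrix}\right)\begin{bmatrix}\bar p\\ \hat p\\ \hat y\end{bmatrix},
\]
with $\Lambda=R^TLR\otimes I>0$ and $\bar F=\frac1N\sum_iF_i=I\otimes A^T+A^T\otimes I$. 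The matrix $\bar F$ is Hurwitz because $A$ is Hurwitz.

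\textbf{Step 3 (singular perturbation).} Set $\epsilon=1/\gamma$. The fast subsystem $\begin{bmatrix}-\Lambda&-\Lambda\\ \Lambda&0\end{bmatrix}$ is Hurwitz: its eigenvalues satisfy $\mu^2+\lambda\mu+\lambda=0$ with $\lambda>0$, which forces $\mathrm{Re}\,\mu<0$. Moreover, the fast subsystem is invertible. Eliminating $(\hat p,\hat y)$ via its quasi-steady state gives the reduced slow system, and the main obstacle is to identify it precisely. From $\dot{\hat y}=\gamma\Lambda\hat p+O(1)\cdot 0$ we get $\hat p\to 0$, and the $\hat y$ equation then pins $\hat y$ so that it absorbs the $O(1)$ terms. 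The reduced slow dynamics are therefore $\dot{\bar p}=\bar F\bar p$, which is Hurwitz.

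\textbf{Step 4 (conclusion).} By the standard linear singular-perturbation result (Tikhonov / Kokotović: if both the reduced system and the boundary-layer matrix are Hurwitz, the full matrix is Hurwitz for $\epsilon$ small), there exists $\gamma^*$ such that the whole matrix is Hurwitz for all $\gamma>\gamma^*$. Hence $\tilde p\to0$, i.e. $P_i(t)\to P^*$, from any initial condition. If needed, I would confirm the Hurwitz property with the composite Lyapunov function $V=\bar p^T X\bar p+\epsilon\,z^TZz$ built from the two Lyapunov equations, where $z$ is the fast variable shifted by its quasi-steady state.
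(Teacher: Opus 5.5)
Your proof is correct. It uses the same coordinates as the paper: your $\bar p,\hat p,\hat y$ and the invariant average of $y$ are the paper's $\xi_1,\xi_2,\xi_3,\xi_4$ up to scaling. The stability step, however, is genuinely different.

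\textbf{The paper's route.} It writes down an explicit Lyapunov function $V=\xi_1^T\bar P\xi_1+\xi_2^T\xi_2+\xi_3^T\xi_3$, with $\bar P={P^*}^{-1}\otimes{P^*}^{-1}$ inherited from the centralized DLE (Lemma~\ref{lem: GES of DLE}). The identity weights on $(\xi_2,\xi_3)$ make the $\pm\gamma A_{23}$ cross terms cancel. Then $\dot V$ is a quadratic form in $(\xi_1,\xi_2)$ alone, negative definite once $2\gamma A_{23}$ dominates, and LaSalle takes care of $\xi_3$.

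\textbf{Your route.} You take $\epsilon=1/\gamma$ as a singular-perturbation parameter. The PI consensus loop $J:=\begin{bmatrix}-\Lambda&-\Lambda\\ \Lambda&0\end{bmatrix}$ is a Hurwitz boundary layer, and the centralized DLE is the reduced model. To make this airtight, replace the heuristic about ``$O(1)\cdot 0$'' by the actual computation. Let $C$ be the $(2,1)$ block of your matrix. Setting the fast dynamics to zero gives exactly $\hat p=0$ and $\hat y=\gamma^{-1}\Lambda^{-1}C\bar p$, so the Schur complement equals $\bar F$ for every $\gamma$, not just in the limit. Also note that after multiplying the fast equations by $\epsilon$ the blocks depend on $\epsilon$: the slow-to-fast coupling is $O(\epsilon)$. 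So you are using the Kokotovi\'c result in its eigenvalue-continuity form. That is legitimate because the zero eigenvalue of $\blkdiag(0,J)$ is semisimple, and its left and right eigenspaces are both the $\bar p$-coordinates.

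\textbf{What each buys.} Your argument gives a clean time-scale interpretation and shows directly that the full error matrix is Hurwitz, with no invariance argument. The paper's argument gives an explicit certificate and an explicit sufficient condition on $\gamma$ (the remark following the theorem). More importantly, its block-diagonal Lyapunov structure is reused with sublevel sets for the nonlinear Riccati dynamics in Theorem~\ref{thm: Asymptotic convergence to solution of ARE}. There a linearization or eigenvalue argument would only be local, and the initialization $P_i(0)=\0$ is far from $P^*$.

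\textbf{Two minor slips.} The characteristic polynomial of $\begin{bmatrix}-\lambda&-\lambda\\ \lambda&0\end{bmatrix}$ is $\mu^2+\lambda\mu+\lambda^2$, which leads to the same conclusion. The equilibrium condition in Step 1 should read $\gamma(L\otimes I)y^*=F(\mathds{1}_N\otimes p^*)+\mathds{1}_N\otimes q$.
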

 We require two technical results before proceeding with the proof of Theorem \ref{thm: Asymptotic convergence to solution of LE}. The first result provides a Lyapunov function certifying global exponential convergence of the DLE \eqref{eqn: DLE} to $P^*$, which will be used later.
 
 \begin{lemma}\label{lem: GES of DLE}
    Let $A$ be a Hurwitz matrix, $Q>0$, and consider  the Lyapunov differential equation  \eqref{eqn: DLE}. 
The Lyapunov function 
    \begin{equation}\label{eqn: Lyapunov function for DLE}
        V(\tilde{P}):=\mathrm{tr}({P^*}^{-1/2}\tilde{P}{P^*}^{-1}\tilde{P}{P^*}^{-1/2})
    \end{equation}
     satisfies 
     \begin{equation}\label{eqn: Dissipation rate for DLE}
         \dot{V} = -2 \mathrm{tr}({P^*}^{-1/2}\tilde{P}{P^*}^{-1}Q{P^*}^{-1}\tilde{P}{P^*}^{-1/2})
     \end{equation}
      along the trajectories of \eqref{eqn: DLE}, where $\tilde{P}:=P-P^*$.
\end{lemma}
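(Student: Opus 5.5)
The plan is a direct computation of $\dot V$ along \eqref{eqn: DLE}, written in the error coordinates $\tilde P = P - P^*$.

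\textbf{Error dynamics.} Since $P^*$ solves \eqref{eqn: LE}, subtracting $0 = A^TP^* + P^*A + Q$ from \eqref{eqn: DLE} gives
\[
\dot{\tilde P} = A^T\tilde P + \tilde P A .
\]
$\tilde P$ is symmetric whenever $P(0)$ is, which is the setting of the DLE. Write $V = \tr(\tilde P {P^*}^{-1}\tilde P {P^*}^{-1})$; this uses the cyclic property \eqref{eqn: Cyclic property of trace} to move the two factors ${P^*}^{-1/2}$ together. Differentiating and using cyclicity once more, the two terms coincide:
\[
\dot V = 2\,\tr\bigl({P^*}^{-1}\tilde P {P^*}^{-1}\dot{\tilde P}\bigr).
\]

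\textbf{Substitution.} Insert the error dynamics and set $M := {P^*}^{-1}\tilde P {P^*}^{-1}$, which is symmetric. Then
\[
\dot V = 2\,\tr(M A^T\tilde P) + 2\,\tr(M\tilde P A).
\]
By transposition and cyclicity, $\tr(MA^T\tilde P) = \tr(\tilde P A M) = \tr(AM\tilde P)$. Hence
\[
\dot V = 2\,\tr\bigl((AM + MA^T)\tilde P\bigr).
\]

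\textbf{Key step.} The goal is to eliminate $A$ using the Lyapunov equation. Insert $P^*{P^*}^{-1}$ next to $A$:
\[
AM + MA^T = {P^*}^{-1}\bigl(P^*A\bigr){P^*}^{-1}\tilde P{P^*}^{-1} + {P^*}^{-1}\tilde P{P^*}^{-1}\bigl(A^TP^*\bigr){P^*}^{-1}.
\]
This splitting alone does not combine, because $P^*A$ and $A^TP^*$ sit on different sides of $\tilde P$. Instead, work directly with the trace. The quantity to handle is
\[
\tr\bigl(M\tilde P A + MA^T\tilde P\bigr) = \tr\bigl(\tilde P A M + \tilde P M A^T\bigr).
\]
Write the first term as $\tr\bigl(\tilde P{P^*}^{-1}(P^*A){P^*}^{-1}\tilde P{P^*}^{-1}\bigr)$. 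Write the second, by transposing, as $\tr\bigl({P^*}^{-1}\tilde P{P^*}^{-1}(A^TP^*){P^*}^{-1}\tilde P\bigr)$, and then rotate it cyclically into $\tr\bigl(\tilde P{P^*}^{-1}(A^TP^*){P^*}^{-1}\tilde P{P^*}^{-1}\bigr)$. Both terms now have the common form $\tr\bigl(\tilde P{P^*}^{-1}X{P^*}^{-1}\tilde P{P^*}^{-1}\bigr)$. Their sum therefore has $X = P^*A + A^TP^* = -Q$. This gives
\[
\dot V = -2\,\tr\bigl(\tilde P{P^*}^{-1}Q{P^*}^{-1}\tilde P{P^*}^{-1}\bigr).
\]
A final cyclic rotation puts this in the symmetric form \eqref{eqn: Dissipation rate for DLE}.

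\textbf{Main obstacle and remaining claims.} The main obstacle is this bookkeeping of transposes and cyclic shifts so that $P^*A$ and $A^TP^*$ land in the same slot. I would verify it by writing $Z := {P^*}^{-1/2}\tilde P{P^*}^{-1/2}$ and $\hat A := {P^*}^{1/2}A{P^*}^{-1/2}$. In these variables $V = \tr(Z^2)$ and $\dot Z = \hat A^TZ + Z\hat A$. Then
\[
\dot V = 2\,\tr\bigl(Z^2(\hat A + \hat A^T)\bigr), \qquad \hat A + \hat A^T = -{P^*}^{-1/2}Q{P^*}^{-1/2},
\]
which makes the identity immediate. Positivity of $V$ for $\tilde P\neq 0$ follows from $V = \tr(Z^2) = \|Z\|_F^2$. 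Negativity of $\dot V$ follows since $\dot V = -2\,\tr\bigl(Z\,{P^*}^{-1/2}Q{P^*}^{-1/2}Z\bigr)$ with $Q>0$; this is the sense in which $V$ certifies convergence to $P^*$. The same bounds give exponential decay: $\dot V \le -2\lambda_{\min}\bigl({P^*}^{-1/2}Q{P^*}^{-1/2}\bigr)V$.
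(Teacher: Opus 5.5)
Your proof is correct and follows essentially the paper's route. Cyclic trace rotations bring $A$ and $A^T$ into the same slot between factors of ${P^*}^{-1}$, and the Lyapunov equation for $P^*$ then eliminates $A$; the paper uses it in the equivalent inverted form ${P^*}^{-1}A^T+A{P^*}^{-1}+{P^*}^{-1}Q{P^*}^{-1}=0$ rather than inserting $P^*{P^*}^{-1}$. Your $Z$-coordinate check is the same change of variables the paper uses for the Riccati case in Lemma~\ref{lem: ES of DRE}. One cosmetic point: the step you call ``transposing'' is just a cyclic rotation, since a literal transpose would produce $A$ rather than $A^T$.
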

\BP
    Since $P^*$ is the equilibrium of \eqref{eqn: DLE}, it follows that 
    \begin{equation}\label{eqn: LE inverted}
        {P^*}^{-1}A^T+A{P^*}^{-1}+{P^*}^{-1}Q{P^*}^{-1}=0.
    \end{equation}
    Using the transformation $\tilde{P}:=P-P^*,$ the DLE \eqref{eqn: DLE} can be rewritten as
    \begin{equation}\label{eqn: Shifted DLE}
        \dot{\tilde{P}}=A^T\tilde{P}+\tilde{P}A.
    \end{equation}
    Consider the Lyapunov function candidate $V(\tilde{P})$ as given in \eqref{eqn: Lyapunov function for DLE}. Then,
    \begin{align*}
        \dot{V}(\tilde{P})&=2 \mathrm{tr}((P^*)^{-1/2}\tilde{P}(P^*)^{-1} (A^T\tilde{P}+\tilde{P}A)(P^*)^{-1/2})\\
        &\overset{\eqref{eqn: Cyclic property of trace}}{=}2 \mathrm{tr} ({P^*}^{-1}\tilde{P}{P^*}^{-1}A^T \tilde{P})+ 2 \mathrm{tr} ({P^*}^{-1}\tilde{P}A{P^*}^{-1}\tilde{P} )\\
        &= 2 \mathrm{tr} ({P^*}^{-1}\tilde{P}({P^*}^{-1}A^T+A{P^*}^{-1})\tilde{P})\\
        &\overset{\eqref{eqn: LE inverted}}{=}-2 \mathrm{tr} ({P^*}^{-1}\tilde{P}{P^*}^{-1}Q{P^*}^{-1}\tilde{P})\\
        &\overset{\eqref{eqn: Cyclic property of trace}}{=} -2 \mathrm{tr}({P^*}^{-1/2}\tilde{P}{P^*}^{-1}Q{P^*}^{-1}\tilde{P}{P^*}^{-1/2}). \rlap{\qquad\;\;\,$\blacksquare$} 
    \end{align*}

Note that exponential convergence follows due to the linearity of the dynamics \eqref{eqn: DLE} and that $\dot V<0$, $\forall P\ne P^*$.

As for the second technical result, we vectorize \eqref{eqn: Distributed DLE with PI coupling} and perform algebraic manipulations to express it in an {equivalent} structured form that proves useful in the proof of Theorem~\ref{thm: Asymptotic convergence to solution of LE}. By ``equivalent,"  we mean the two dynamical systems admit a one-to-one correspondence between their solutions: every solution ${(P_i, Y_i)}_{i \in \mathcal{I}}$ to \eqref{eqn: Distributed DLE with PI coupling} can be mapped uniquely to a solution $(\xi_1, \ldots, \xi_4)$ of \eqref{eqn: Compact dynamics DLE}, and this mapping is invertible.
 
\begin{lemma}\label{lem: DLE Vectorization and simplification}
     The distributed dynamics \eqref{eqn: Distributed DLE with PI coupling} is equivalent to the linear dynamics
    \begin{equation}\label{eqn: Compact dynamics DLE}
    \begin{bmatrix}
        \dot{\xi}_1 \\ \dot{\xi}_2 \\ \dot{\xi}_3 \\ 
        \dot{\xi}_4
    \end{bmatrix}=\begin{bmatrix}
        \bar{A} & A_{12} & \mathbf{0} & \mathbf{0} \\
        A_{21} & A_{22}-\gamma A_{23} & -\gamma A_{23} & \mathbf{0}\\
        \mathbf{0} & \gamma A_{23} & \mathbf{0} & \mathbf{0}\\
        \mathbf{0} & \mathbf{0} & \mathbf{0} & \mathbf{0}
    \end{bmatrix}\begin{bmatrix}
        \xi_1 \\ \xi_2 \\ \xi_3 \\ \xi_4
    \end{bmatrix},
\end{equation}
where $\bar{A}:=(I \otimes A^T + A^T \otimes I)$, 
$\xi_1 \in \R^{n^2}$, $\xi_2 \in \R^{n^2(N-1)}$, $\xi_3 \in \R^{n^2}$, $\xi_4 \in \R^{n^2(N-1)}$, and $A_{23}>0, A_{12}$, $A_{21}$, $A_{22}$ are matrices with appropriate dimensions.
\end{lemma}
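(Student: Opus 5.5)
Vectorizing and then changing coordinates with an orthonormal basis adapted to the consensus subspace should give \eqref{eqn: Compact dynamics DLE}. Set $p_i:=\mathrm{vec}(P_i)$, $y_i:=\mathrm{vec}(Y_i)$, $q:=\mathrm{vec}(Q)$, and stack $p:=\col(p_1,\ldots,p_N)$, $y:=\col(y_1,\ldots,y_N)$. Using $\mathrm{vec}(ABC)=(C^T\otimes A)\mathrm{vec}(B)$, the dynamics \eqref{eqn: Distributed DLE with PI coupling} become
\begin{align*}
\dot p &= \mathcal{A}p + \mathds{1}_N\otimes q - \gamma(L\otimes I_{n^2})(p+y),\\
\dot y &= \gamma(L\otimes I_{n^2})p,
\end{align*}
where $\mathcal{A}:=\blkdiag\big(N(I\otimes A_i^T + A_i^T\otimes I)\big)_{i\in\I}$ and $L$ is the Laplacian of the undirected connected graph.

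Next take an orthogonal matrix $U=[\tfrac{1}{\sqrt N}\mathds{1}_N \;\; U_2]$, with $U_2\in\R^{N\times(N-1)}$. Then $U^TLU=\blkdiag(0,\Lambda)$ and $\Lambda=U_2^TLU_2>0$ because the graph is connected. Apply $T:=U^T\otimes I_{n^2}$ to $p$ and to $y$. The average component of $p$ evolves by $\frac{1}{\sqrt N}(\mathds{1}^T\otimes I)\mathcal{A}(\cdots)$, and $\frac1N\sum_i N(I\otimes A_i^T+A_i^T\otimes I)=\bar A$ since $\sum_iA_i=A$. Its drift therefore equals $\bar A$ acting on the average component plus a coupling $A_{12}$ to the disagreement components. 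The $U_2$-block of $p$ gets $A_{21}$, $A_{22}$ from $\mathcal{A}$ and $-\gamma(\Lambda\otimes I)$ from the Laplacian, so $A_{23}:=\Lambda\otimes I_{n^2}>0$. The average component of $y$ has zero derivative, since $\mathds{1}^TL=0$. It also does not enter $\dot p$, since $L\mathds{1}=0$. This block will become $\xi_4$.

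It remains to remove the affine terms and match the zero pattern. The constant $\mathds{1}_N\otimes q$ projects onto the average direction only, contributing $\sqrt N q$ there. I would shift by an equilibrium. Set $\xi_1:=\frac{1}{\sqrt N}(\mathds{1}^T\otimes I)p-\sqrt N\,\mathrm{vec}(P^*)$. Using $\bar A\,\mathrm{vec}(P^*)+q=0$, this makes $\dot\xi_1=\bar A\xi_1+A_{12}\xi_2$ with no constant. The shift, however, leaves the term $\sqrt N A_{21}\mathrm{vec}(P^*)$ in the $\xi_2$ equation. I would absorb it into the $y$-disagreement variable: choose $\xi_3:=(U_2^T\otimes I)y - \bar y$ with $\gamma A_{23}\bar y=\sqrt N A_{21}\mathrm{vec}(P^*)$, which is solvable since $A_{23}$ is invertible. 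This makes $\xi_3$ enter the $\xi_2$ equation as $-\gamma A_{23}\xi_3$ with no residual. Its own derivative is $\gamma A_{23}\xi_2$, which matches the third row.

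There is a dimension bookkeeping mismatch, and I expect this to be the main obstacle. The zero-derivative average of $y$ has dimension $n^2$, while $\xi_3$, the $y$-disagreement, has dimension $n^2(N-1)$; yet the statement lists $\xi_3\in\R^{n^2}$ and $\xi_4\in\R^{n^2(N-1)}$. I would resolve it by simply relabeling the blocks, putting the $y$-disagreement as the third block and the $y$-average as $\xi_4$, and checking that the displayed zero pattern is exactly as stated. Equivalence holds because $T$ is invertible and the shifts are constant, so the map between $\{(P_i,Y_i)\}$ and $(\xi_1,\ldots,\xi_4)$ is a bijection on solutions.
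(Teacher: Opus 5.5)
Your proposal is correct and follows the paper's proof essentially step by step. You vectorize, split $\mathbf{p},\mathbf{y}$ into consensus and disagreement components using an orthonormal complement of $\mathds{1}_N$, and shift the average of $\mathbf{p}$ and the disagreement of $\mathbf{y}$ by the equilibrium, which gives $A_{23}=\Lambda>0$. The paper instead uses $\frac{1}{N}\omega^T$ for the average and an eigenvector basis of $L$, which is a purely cosmetic difference from your $\frac{1}{\sqrt N}$-normalized basis. Your dimension observation is also right: in the paper's own construction $\xi_3=\tilde{\mathbf{y}}-\tilde{\mathbf{y}}^*\in\R^{n^2(N-1)}$ and $\xi_4=\bar{\mathbf{y}}\in\R^{n^2}$, so the dimensions listed for $\xi_3$ and $\xi_4$ in the statement are simply swapped.
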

\BP
    See Appendix \ref{apx: Dist alg for LE}.
\EP 

\BPof{Theorem \ref{thm: Asymptotic convergence to solution of LE}}
As we will be working with the vectorized dynamics \eqref{eqn: Compact dynamics DLE} of the algorithm \eqref{eqn: Distributed DLE with PI coupling}, we first provide the Lyapunov function \eqref{eqn: Lyapunov function for DLE} and its dissipation rate \eqref{eqn: Dissipation rate for DLE} in terms of the vector $\tilde{p}:=\mathrm{vec}(\tilde{P})$.
To this end, note that
the vectorized form of \eqref{eqn: Shifted DLE} is given by 
\begin{equation}\label{eqn: Shifted VDLE}
    \dot{\tilde{p}}=\underbrace{(I \otimes A^T + A^T \otimes I)}_{=\bar{A}}\tilde{p},
\end{equation}
where $\tilde{p}:=\mathrm{vec}(\tilde{P})$. Using properties of trace and Kronecker product, the Lyapunov function \eqref{eqn: Lyapunov function for DLE} and its rate of change \eqref{eqn: Dissipation rate for DLE} can be written in terms of $\tilde{p}$ as
\begin{equation}\label{eqn: Lyapunov function for VDLE}
    V(\tilde{p})=\tilde{p}^T \underbrace{({P^*}^{-1} \otimes {P^*}^{-1})}_{=:\bar P}\tilde{p},\\[-2mm]
\end{equation}
and 
\begin{equation}\label{eqn: Dissipation rate for VDLE}
    \dot{V}(\tilde{p})= - 2\tilde{p}^T \underbrace{({P^*}^{-1} \otimes {P^*}^{-1}Q{P^*}^{-1})}_{=:\frac{1}{2}\bar{Q}} \tilde{p}.
\end{equation}
From \eqref{eqn: Shifted VDLE}, \eqref{eqn: Lyapunov function for VDLE}, and \eqref{eqn: Dissipation rate for VDLE}, we have
\begin{equation}\label{eqn: Lyapunov equation for VDLE}
    \bar{A}^T\bar{P}+\bar{P}\bar{A}+\bar{Q}=0.
\end{equation}
Next, we show that the origin of \eqref{eqn: Compact dynamics DLE} is asymptotically stable. Since $\dot{\xi}_4=0$ and it is not coupled with the rest of the dynamics, we omit $\xi_4$ in the following analysis. Consider the Lyapunov function candidate $V(\xi_1,\xi_2,\xi_3):=\xi^T_1\bar{P}\xi_1 + \xi^T_2\xi_2 + \xi^T_3\xi_3$, where $\bar{P}$ is defined in \eqref{eqn: Lyapunov function for VDLE}. We have
\begin{align*}
    \dot{V}&\,=\xi^T_1(\bar{P}\bar{A}+\bar{A}^T\bar{P})\xi_1+2\xi^T_1\bar{P}A_{12}\xi_2+2\xi^T_2A_{21}\xi_1\\&\,+2\xi^T_2A^T_{22}\xi_2-2\gamma \xi^T_2A_{23}\xi_2-2\gamma \xi^T_2A_{23}\xi_3+2\gamma \xi^T_3A_{23}\xi_2\\ &\overset{\eqref{eqn: Lyapunov equation for VDLE}}{=}-\xi^T_1\bar{Q}\xi_1-2\gamma \xi^T_2A_{23}\xi_2 + 2\xi^T_2A_{22}\xi_2\\&\qquad +2\xi^T_1(\bar{P}A_{12}+A^T_{21})\xi_2 \\&\,=-\begin{bmatrix}
        \xi_1\\\xi_2   
    \end{bmatrix}^T\begin{bmatrix}
        \bar{Q} & \bar{P}A_{12} + A^T_{21} \\ * & 2\gamma A_{23}-(A_{22}+A^T_{22})
    \end{bmatrix}\begin{bmatrix}
            \xi_1 \\ \xi_2
        \end{bmatrix}.
\end{align*}
Noting that $A_{23}>0$ and $\bar{Q}>0$, there exists a large enough $\gamma>0$ such that $\dot{V}<0$. By using La Salle's invariance principle, we conclude that $\xi_1, \xi_2$ and $\xi_3$ asymptotically converge to zero. 
Finally, from the proof of Lemma \ref{lem: DLE Vectorization and simplification}, it follows that  
$\xi_1=0$ and $\xi_2=0$ correspond to $P_i=P^*$ for all $i\in \I$, which completes the proof.
\EP

Note that there are two additional terms in the distributed algorithm \eqref{eqn: Distributed DLE with PI coupling}, compared to \eqref{eqn: Distributed DLE with coupling}. 
The integral law \eqref{eqn: Y_dot in DLE PI alg} ensures that there are no disagreements among the $P_i$ variables at steady-state. The added term in \eqref{eqn: P_dot in DLE PI alg} modifies the equation \eqref{eqn: Averaged DLE dynamics with synchronization} to 
\[
0=NA_i^TP^*+ NP^*A_i+\gamma\sum_{j \in \mathcal{N}_i}(Y^*_j-Y^*_i)+Q,\\[-2mm]
\]
which renders $P^*$ a feasible equilibrium, unlike the previous case. As a result, at the equilibrium of \eqref{eqn: Distributed DLE with PI coupling}, we have $P_i=P_j$, $\forall i,j\in \I$ and averaging the dynamics for all $i$ recovers the Lyapunov equation \eqref{eqn: Distributed LE}.
\begin{remark}\label{prop: Closed form gamma DLE}
    Theorem \ref{thm: Asymptotic convergence to solution of LE} is an existence result, i.e., it states that there exists a large enough $\gamma$ such that the algorithm \eqref{eqn: Distributed DLE with PI coupling} converges to the unique solution $P^*$ of
    the Lyapunov equation \eqref{eqn: LE}. 
    In particular, from the proof of Theorem \ref{thm: Asymptotic convergence to solution of LE}, any $\gamma$ satisfying the inequality 
    \begin{equation*}
        2\gamma A_{23}> A_{22}+A^T_{22} + (A^T_{12}\bar{P}+A_{21})\bar{Q}^{-1}(\bar{P}A_{12}+A^T_{21})  
    \end{equation*}
    can be chosen.  
    Noting that $A_{23}>0$, such a selection is possible if bounds on the quantities involved are available; otherwise, an iterative procedure should be used.   
\end{remark}

\section{Distributed algorithm to solve LQR problem of LTI system}\label{sec: Dist sol to ARE}
By leveraging the splitting scheme carried out in Section \ref{sec: Data distribution scheme and partial system identification}, Problem \ref{prb: LQR for LTI} reduces to 
a distributed computation of the unique stabilizing solution $P^*$ to the algebraic Riccati equation 
\begin{equation}\label{eqn: Distributed ARE}
    \left(\sum_{i=1}^N A^T_i \right)P+P\left(\sum_{i=1}^N A_i \right) + Q - P D P = 0,
\end{equation}
where $D:=BR^{-1}B^T$, and $A_i$ is the data-dependent matrix in \eqref{eqn: Distributed equation of A} available to the $i$th agent.
Once $P^*$ is computed, the optimal control policy is obtained by \eqref{eqn: Optimal state feedback}. 

The stabilizing solution to the algebraic Riccati equation \eqref{eqn: Distributed ARE} can be computed using the associated differential Riccati equation
\begin{equation}\label{eqn: DRE}
    \dot{P}=A^TP+PA+Q-PDP,
\end{equation}
where $A=\sum_{i=1}^N A_i$.
Similar to Section \ref{sec: Dist sol to LE}, we resort to blended dynamics of the form \eqref{eqn: MA gen} and aim to design a distributed algorithm such that the averaged dynamics coincides with the differential Riccati equation \eqref{eqn: DRE}.
This idea together with the property of exponential convergence of the solutions of the DRE \eqref{eqn: DRE} to the stabilizing solution, enables the design of such an algorithm as formally stated next.
\begin{theorem}\label{thm: Dist alg for prac conv ARE}
Let each agent $i \in \I$ have access to the data-dependent rank one matrix $A_i$ in \eqref{eqn: Equation of A}, and employ the distributed  dynamics: 
\begin{equation}\label{eqn: Distributed DRE with coupling}
    \dot{P}_i=NA^T_iP_i+NP_iA_i+Q-P_iDP_i+\gamma\sum_{j \in \mathcal{N}_i}(P_j-P_i),
\end{equation}
with $P_i(0)=\mathbf{0}, \forall ~ i\in\I$. Then, for any $\epsilon>0$, there exists a coupling gain $\gamma^*>0$ such that for any $\gamma>\gamma^*$, the solutions of the distributed dynamical algorithm \eqref{eqn: Distributed DRE with coupling} satisfy $\limsup_{t \to \infty} ||P_i(t)-P^*||\leq \epsilon$, where $P^*$ is the unique stabilizing solution to \eqref{eqn: ARE}.
\end{theorem}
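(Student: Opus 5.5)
The plan mirrors the proof of Theorem \ref{thm: Dist alg for prac conv LE}: identify the blended dynamics of \eqref{eqn: Distributed DRE with coupling} and invoke Lemma \ref{lem: Blended dynamics}. Vectorizing, each agent's drift is $f_i(p_i)=\mathrm{vec}(NA_i^TP_i+NP_iA_i+Q-P_iDP_i)$. Averaging over $i$ and using $\sum_i A_i=A$, the blended dynamics is
\[
\dot S=\Big(\sum_{i=1}^N A_i^T\Big)S+S\Big(\sum_{i=1}^N A_i\Big)+Q-SDS,
\]
which is exactly the DRE \eqref{eqn: DRE}. The quadratic term $P_iDP_i$ is common to all agents, so it survives averaging unchanged. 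The equilibrium to target is therefore $s^*=\mathrm{vec}(P^*)$, with $P^*$ the stabilizing solution of \eqref{eqn: ARE}.

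The main obstacle is that, unlike the linear DLE, the DRE is not globally asymptotically stable. Its equilibrium set contains every symmetric solution of the ARE, and solutions from some indefinite initial conditions may escape in finite time. So we cannot take $\mathcal{D}_b$ to be the whole space. Instead, $\mathcal{D}_b$ must be an open subset of the domain of attraction of $P^*$ containing the initial condition, and we must check that $P^*$ is uniformly asymptotically stable. The second point is standard: the linearization of \eqref{eqn: DRE} at $P^*$ is $\dot{\tilde P}=(A-DP^*)^T\tilde P+\tilde P(A-DP^*)$, and $A-DP^*$ is Hurwitz because $P^*$ is stabilizing, so $P^*$ is locally exponentially stable. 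For the domain of attraction, I would invoke the known result that, for stabilizable $(A,B)$ and $Q>0$ (hence detectable $(Q^{1/2},A)$), every solution of the forward DRE starting from any $P(0)\geq 0$ exists for all time and converges to $P^*$. One way to see this is the time-reversal of the standard backward Riccati convergence; alternatively one uses monotonicity from $P(0)=0$, the upper bound given by the cost of a stabilizing gain, and uniqueness of the positive semidefinite ARE solution. The set $\{P=P^T: P>-\delta I\}$ for small $\delta>0$ is open, and I expect it still lies in the domain of attraction by continuity and the comparison theorem for Riccati equations. If that extension is hard to justify, I would take $\mathcal{D}_b$ to be any open neighborhood of $\{P\ge 0\}$ that lies in the domain of attraction; the domain of attraction is open, so such a neighborhood exists around any compact piece of it.

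The initial condition $P_i(0)=\mathbf{0}$ for all $i$ corresponds to $\mathds{1}_N\otimes s_0+w$ with $s_0=0\in\mathcal{D}_b$ and $w=0$, so it lies in $\mathcal{D}_x$. Take $\mathcal{K}$ to be the singleton $\{0\}\subset\R^{n^2N}$, which is compact. Lemma \ref{lem: Blended dynamics} then yields, for each $\epsilon>0$, a $\gamma^*$ such that $\limsup_{t\to\infty}\|p_i(t)-\mathrm{vec}(P^*)\|\le\epsilon$ for all $\gamma>\gamma^*$. Since the Frobenius norm bounds the $2$-norm, this gives the claimed bound $\limsup_{t\to\infty}\|P_i(t)-P^*\|\le\epsilon$. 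The lemma, as it is used, also guarantees that the networked solutions exist on $[0,\infty)$ for large $\gamma$, so finite escape of the quadratic agent dynamics is not an issue. Note that symmetry is preserved along solutions, since each $f_i$ maps symmetric matrices to symmetric matrices, so working within symmetric matrices is legitimate.
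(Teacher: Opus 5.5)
Your proposal is correct and follows the paper's proof. The paper likewise identifies the blended dynamics of \eqref{eqn: Distributed DRE with coupling} as the DRE \eqref{eqn: DRE}, invokes convergence of DRE solutions from the positive semidefinite cone (in particular from $P(0)=\mathbf{0}$) to $P^*$, and applies Lemma \ref{lem: Blended dynamics} with $s^*=\mathrm{vec}(P^*)$ and $\mathcal{K}=\{\mathbf{0}\}$. Your choice of $\mathcal{D}_b$ as an open neighborhood of $\mathbf{0}$ inside the (open) domain of attraction is slightly more careful than the paper's choice $\mathcal{D}_b=\{Y\geq 0\}$, which is not open as the lemma requires.
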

\BP
The averaged dynamics of \eqref{eqn: Distributed DRE with coupling} is equal to 
\begin{equation*}
    \dot{S}=\left(\sum_{i=1}^NA^T_i\right)S+S\left(\sum_{i=1}^N A_i\right)+Q-SDS,
\end{equation*}
which coincides with the differential Riccati equation \eqref{eqn: DRE}. Therefore, their respective vectorized versions coincide with each other. Note that the solutions of the DRE \eqref{eqn: DRE} initiated in the positive semidefinite cone exponentially converge to the stabilizing solution $P^*$ \cite[Theorem 3]{callier1994convergence},\cite{callier1981criterion}. Hence, practical convergence follows directly from Lemma \ref{lem: Blended dynamics} with $s^*=\mathrm{vec}(P^*)$, $\mathcal{D}_b=\{Y|Y\geq \0\} \subset \R^{N}$, and  $\mathcal{K}=\mathcal{D}_x=\{\mathbf{0}\}$.
\EP

Due to arguments similar to those presented after Theorem \ref{thm: Dist alg for prac conv LE}, it is impossible to achieve convergence to the exact stabilizing solution using the dynamics \eqref{eqn: Distributed DRE with coupling}.
We again introduce additional PI-type dynamics to \eqref{eqn: Distributed DRE with coupling}, in order to guarantee asymptotic convergence to the desired solution. 
We emphasize that establishing exact convergence in the case of the DRE requires significantly more careful treatment and derivation due to the inherent nonlinearity of the Riccati dynamics. Unlike the linear DLE case, global convergence does not hold for the DRE, which necessitates the analysis of suitably constructed Lyapunov level sets.
Moreover, since the true solution $P^*$ is unknown, it may not be possible to initialize the dynamics in a sufficiently small neighborhood of $P^*$, thus making linearization-based approaches  impractical. 
The resulting algorithm and its accompanying formal guarantees are presented in the subsequent theorem, which addresses Problem~\ref{prb: LQR for LTI}. We note that the optimal controller $K^*$ can be then computed a posteriori via \eqref{eqn: Optimal state feedback}.

\begin{theorem}\label{thm: Asymptotic convergence to solution of ARE}
Let each agent $i\in\I$ have access to data-dependent rank one matrix $A_i$ in \eqref{eqn: Distributed equation of A}, and employ the distributed dynamics:  
\begin{subequations}\label{eqn: Distributed DRE with PI coupling}
    \begin{align}
        \nonumber \dot{P}_i=&NA^T_iP_i+NP_iA_i+Q-P_iDP_i\\&\quad+ \gamma\sum_{j \in \mathcal{N}_i}(P_j-P_i)+ \gamma\sum_{j \in \mathcal{N}_i}(Y_j-Y_i), \\
        \dot{Y}_i=&-\gamma\sum_{j \in \mathcal{N}_i}(P_j-P_i),
    \end{align}
\end{subequations}
with $P_i(0)=\0, ~Y_i(0)=\0, \forall ~i\in\I$. Then, there exists a coupling gain $\gamma^*>0$ such that for any $\gamma> \gamma^*$, the solutions of the  distributed dynamical algorithm \eqref{eqn: Distributed DRE with PI coupling} satisfy $\lim_{t \to \infty} P_i(t)= P^*$, where $P^*$ is the unique stabilizing solution to \eqref{eqn: ARE}.
\end{theorem}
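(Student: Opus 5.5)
We mirror the proof of Theorem \ref{thm: Asymptotic convergence to solution of LE}, now handling the quadratic term $-P_iDP_i$. First, we change coordinates as in Lemma \ref{lem: DLE Vectorization and simplification}. Vectorize $P_i$ and $Y_i$, and use an orthonormal basis $[\frac{1}{\sqrt N}\mathds{1}_N,\ U]$ adapted to the Laplacian $L$ of the undirected connected graph. Take $\xi_1$ as the average of $\mathrm{vec}(P_i)-\mathrm{vec}(P^*)$ and $\xi_2$ as the disagreement of the $P_i$ (the $U^T$ component). Take $\xi_3$ as the disagreement component of $Y$, shifted by an equilibrium value $Y^*$. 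Finally, take $\xi_4$ as the average of $Y$, which is constant and decoupled since $\mathds{1}^TL=0$.

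The equilibrium value $Y^*$ must satisfy
\[
NA_i^TP^*+NP^*A_i+Q-P^*DP^*+\gamma\sum_{j\in\mathcal{N}_i}(Y^*_j-Y^*_i)=0
\]
for all $i$. It exists because these residuals sum to zero by \eqref{eqn: Distributed ARE}, so they lie in the range of $L\otimes I$.

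In these coordinates the error dynamics take the form of \eqref{eqn: Compact dynamics DLE} plus quadratic terms. The linear part is $\bar A_c:=I\otimes A_c^T+A_c^T\otimes I$ with $A_c:=A-DP^*$ Hurwitz. This replaces $\bar A$ in the $\xi_1$ block, and $A_{23}>0$ is again the restricted Laplacian. The quadratic terms come from $-\tilde P_iD\tilde P_i$ with $\tilde P_i=P_i-P^*$. They appear in both the $\xi_1$ and $\xi_2$ equations and are of order $\|\xi_1\|^2+\|\xi_2\|^2$.

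Next we build a Lyapunov function
\[
W=\xi_1^T\bar P\xi_1+\xi_2^T\xi_2+\xi_3^T\xi_3,
\]
where $\bar P>0$ solves $\bar A_c^T\bar P+\bar P\bar A_c+\bar Q=0$. One could take $\bar P={P_c}^{-1}\otimes {P_c}^{-1}$ in the spirit of Lemma \ref{lem: GES of DLE}, but any solution suffices. The cross terms in $\xi_3$ cancel exactly as before.

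The derivative then satisfies
\[
\dot W\le -\begin{bmatrix}\xi_1\\\xi_2\end{bmatrix}^T M_\gamma\begin{bmatrix}\xi_1\\\xi_2\end{bmatrix}+c\,(\|\xi_1\|+\|\xi_2\|)^3,
\]
where $M_\gamma$ is the same block matrix as in the linear proof. By the Schur-complement argument of Remark \ref{prop: Closed form gamma DLE}, $M_\gamma\ge \mu I$ for $\gamma$ large.

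The main obstacle is that the cubic term makes this local, while the initial condition $P_i(0)=Y_i(0)=\0$ is fixed and possibly far from $P^*$. Moreover, $\xi_3(0)$ depends on $Y^*$, which scales like $1/\gamma$, so it is small for large $\gamma$. The plan is to use a two-time-scale/level-set argument.

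(i) The initial $\xi_2(0)=0$ and $\xi_3(0)=O(1/\gamma)$, since $Y^*=O(1/\gamma)$, so the disagreement states start small.

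(ii) By Theorem \ref{thm: Dist alg for prac conv ARE}-type reasoning, or directly from the $\xi_1$ dynamics, which equal the DRE \eqref{eqn: DRE} perturbed by terms in $\xi_2,\xi_3$, the average tracks a DRE solution from $P=\0$. That solution converges to $P^*$ by \cite{callier1994convergence}, staying in a compact set independent of $\gamma$. Meanwhile $\|\xi_2\|$ stays $O(1/\gamma)$, because the $-\gamma A_{23}$ damping dominates the bounded forcing.

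(iii) Hence after a finite time $T$, independent of $\gamma$ for large $\gamma$, the state enters a sublevel set $\{W\le c_0\}$. On this set the cubic term is dominated by $\mu$, and $\dot W\le -\frac{\mu}{2}(\|\xi_1\|^2+\|\xi_2\|^2)$.

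Making (ii) rigorous will be the hardest part. I would first try a comparison/Gronwall bound on $\xi_2,\xi_3$ over $[0,T]$ that uses the fast decay of the disagreement modes. This bound must be uniform in $\gamma$, and the constants must be chosen so that $c_0$ does not depend on $\gamma$.

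Finally, positive invariance of $\{W\le c_0\}$ gives boundedness. LaSalle's invariance principle, applied as in Theorem \ref{thm: Asymptotic convergence to solution of LE}, gives $\xi_1,\xi_2\to0$, and then $\dot\xi_3=\gamma A_{23}\xi_2$ forces $\xi_3\to0$ on the invariant set. Since $\xi_1=\xi_2=0$ means $P_i=P^*$ for all $i$, we obtain $\lim_{t\to\infty}P_i(t)=P^*$.
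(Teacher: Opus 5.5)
Your two-phase route (finite-horizon tracking of the centralized DRE from $P=0$, then a local Lyapunov/LaSalle argument) is genuinely different from the paper's and can be made to work. However, step (ii) is not yet an argument, and the reason you give for it fails as stated.

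The forcing in the $\xi_2$ equation contains $-\gamma A_{23}\xi_3$, and $\xi_3$ is itself driven by $\gamma A_{23}\xi_2$. So this is not an a priori bounded input. An $O(1/\gamma)$ bound on $\xi_2$ alone only gives $\dot\xi_3=O(1)$, so $\gamma A_{23}\xi_3$ could grow to size $O(\gamma T)$ on $[0,T]$. Your size claims are in fact true; note that $\xi_3(0)=\frac{1}{\gamma}A_{23}^{-1}A_{21}\xi_1(0)$ sits exactly on the quasi-steady state of the fast subsystem. But proving them requires treating $(\xi_2,\xi_3)$ jointly.

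The cancellation you already use for $W$ does this cheaply. Work in a tube around the DRE trajectory, where $\|\xi_1\|\le R$ and $\|A_{22}\|+\|g_{21}(\xi_1)\|+\|g_{22}(\xi_2)\|\le k_R$ (notation of Lemma \ref{lem: Simplified dynamics DRE}). There,
\[
\tfrac{d}{dt}\bigl(\|\xi_2\|^2+\|\xi_3\|^2\bigr)\le -2\bigl(\gamma\lambda_{\min}(A_{23})-k_R\bigr)\|\xi_2\|^2+2\|A_{21}\|R\,\|\xi_2\|,
\]
so $\|\xi_2(t)\|^2+\|\xi_3(t)\|^2\le\|\xi_3(0)\|^2+C_Rt/\gamma$. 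This bound suffices. The perturbation $(A_{12}-g_{12}(\xi_2))\xi_2$ of the $\xi_1$-DRE is then $O(\sqrt{T/\gamma})$. Gronwall gives a tracking error $O(e^{LT}\sqrt{T/\gamma})$, and a continuation argument keeps the state in the tube.

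The constants must be fixed in this order. First choose $\gamma_0,\mu,c_0$, independent of $\gamma$; this is legitimate because $M_\gamma$ is Loewner-increasing in $\gamma$ and the cubic remainder has $\gamma$-free constants. Next choose $\delta$ and $T$ from the convergence of the centralized DRE. Only then choose $\gamma$.

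One small slip at the end: on the set $\{\xi_1=\xi_2=0\}$ it is the $\xi_2$ equation, $0=\dot\xi_2=-\gamma A_{23}\xi_3$, that forces $\xi_3=0$. The $\xi_3$ equation does not.

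The paper needs no transient phase. Its key ingredient is the specific weight $P_R={P^*}^{-1}\otimes{P^*}^{-1}$ from Lemma \ref{lem: ES of DRE}. Its dissipation inequality holds on the whole shifted cone $\Psi=\{\mathrm{mat}(\xi_1)\ge -P^*\}$, not only near the origin. Hence the quadratic term $-\phi^r(\xi_1)\xi_1$ is absorbed exactly. The remaining nonlinearities enter only through the affine gains $h_{12},h_{22}$, which are bounded on a sublevel set and dominated by $\gamma$.

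The fixed initial point $\xi_1(0)=-\mathrm{vec}(P^*)$ lies on the boundary of $\Psi$ with $\xi_1(0)^TP_R\xi_1(0)=n$, so taking $\alpha>n/b^2$ captures it. Invariance of $\Psi\cap\Omega_{\alpha b^2}$ follows from invariance of the PSD cone under the DRE perturbed by $G(\xi_2)$ with $\|\xi_2\|\le b$. The $\beta$ cross term makes $\dot V$ negative definite in $\xi_3$ too, so LaSalle is not needed.

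The paper thus gets a single Lyapunov argument with explicit conditions on $(\alpha,\beta,\gamma)$ and no $e^{LT}$ factors. The price is the nonstandard cone-wide Lyapunov function and the cone-invariance step. Your argument uses only a local certificate for the linearization plus the known convergence of the DRE from $P=0$. It is essentially a generic semiglobal singular-perturbation argument that would transfer to other blended dynamics with a convergent nominal trajectory. However, your $\gamma^*$ depends on the transient time $T$ and on Gronwall constants, so it is much more conservative and far less explicit.
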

 
Note that the origin is picked as a suitable and accessible initialization point for all agents.
In order to prove Theorem \ref{thm: Asymptotic convergence to solution of ARE}, we need a few technical results. 
The first result concerns exponential convergence of the trajectories of DRE \eqref{eqn: DRE}, initialized in the positive semidefinite cone,  
to the unique stabilizing solution.
While this property follows from explicit formulas available for $P(t)$ \cite[Theorem 3]{callier1994convergence},\cite{callier1981criterion}, we instead require a Lyapunov-based argument, which will be used in the proof of Theorem \ref{thm: Asymptotic convergence to solution of ARE}. This result is formally stated next.
\footnote{To the best of our knowledge, no nonlinear Lyapunov analysis of the DRE has been reported in the literature, making this technical lemma of independent interest.}

\begin{lemma}\label{lem: ES of DRE}
    Assume that the pair $(A,B)$ is stabilizable and $Q, R>0$. The solutions of the differential Riccati equation 
    \eqref{eqn: DRE}
    starting at $P(0)\geq 0$ exponentially converge to the stabilizing solution $P^*>0$ of the associated ARE in \eqref{eqn: ARE}. In particular, the positive definite Lyapunov function $V(\tilde{P})$ defined as 
    \begin{equation}\label{eqn: Lyapunov function for DRE}
V(\tilde{P}):=\mathrm{tr}\left(P^{*^{-1/2}}\tilde{P}P^{*^{-1}}\tilde{P}P^{*^{-1/2}}\right) 
    \end{equation}
    satisfies 
    \begin{equation}\label{eqn: Dissipation rate for DRE}
         \dot{V} \leq -2\mathrm{tr}\left(P^{*^{-1/2}}\tilde{P}P^{*^{-1}}QP^{*^{-1}}\tilde{P}P^{*^{-1/2}}\right) \leq -2{\rho}^{-1} V,
    \end{equation}
    with $\tilde{P}:=P-P^*$ and $\rho:=||{P^*}^{1/2}Q^{-1}{P^*}^{1/2}||$, for any solution of \eqref{eqn: DRE} initialized at $P(0)\geq 0$.
\end{lemma}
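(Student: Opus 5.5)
The plan is to repeat the computation of Lemma~\ref{lem: GES of DLE}, with $A$ replaced by the optimal closed-loop matrix. The Riccati nonlinearity then appears as an extra term, and I expect showing that this term has a sign to be the main difficulty.

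\textbf{Step 1 (error dynamics).} Let $A_c:=A-DP^*$. This matrix is Hurwitz because $P^*$ is the stabilizing solution. Subtracting the ARE \eqref{eqn: ARE} from \eqref{eqn: DRE} and writing $P=P^*+\tilde P$ gives
\[
\dot{\tilde P}=A_c^T\tilde P+\tilde PA_c-\tilde PD\tilde P .
\]
The ARE in closed-loop form reads $A_c^TP^*+P^*A_c+Q+P^*DP^*=0$. Pre- and post-multiplying by ${P^*}^{-1}$ yields the analogue of \eqref{eqn: LE inverted}:
\[
{P^*}^{-1}A_c^T+A_c{P^*}^{-1}=-{P^*}^{-1}Q{P^*}^{-1}-D .
\]

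\textbf{Step 2 (derivative of $V$).} Differentiate \eqref{eqn: Lyapunov function for DRE} and use the cyclic property \eqref{eqn: Cyclic property of trace} exactly as in the proof of Lemma~\ref{lem: GES of DLE}. The linear part $A_c^T\tilde P+\tilde PA_c$ contributes
\[
-2\,\mathrm{tr}({P^*}^{-1}\tilde P{P^*}^{-1}Q{P^*}^{-1}\tilde P)-2\,\mathrm{tr}({P^*}^{-1}\tilde PD\tilde P).
\]
The quadratic part $-\tilde P D\tilde P$ contributes $-2\,\mathrm{tr}({P^*}^{-1}\tilde P{P^*}^{-1}\tilde PD\tilde P)$. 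Using $\tilde P=P-P^*$, the two $D$-dependent terms combine into
\[
-2T,\qquad T:=\mathrm{tr}\bigl({P^*}^{-1}\tilde P{P^*}^{-1}P\,D\,\tilde P\bigr),
\]
where $P$ is the current, unshifted state. The first inequality in \eqref{eqn: Dissipation rate for DRE} is therefore equivalent to $T\ge 0$ along solutions.

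\textbf{Step 3 (main obstacle: $T\ge0$).} This needs two ingredients.

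First, solutions starting at $P(0)\ge0$ stay in the positive semidefinite cone, and on that cone they exist for all time. This is the standard monotonicity property of the DRE with $Q>0$, and I will cite it.

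Second, a trace inequality must show that $T$ is nonnegative whenever $P\ge0$. Write $D=BR^{-1}B^T$ and $G:=R^{-1/2}B^T\tilde P$. I would first try to rewrite $T$ (or its symmetrization $\tfrac12(T+T^\top\text{-version})$, which is allowed since $T$ is real) in the form $\mathrm{tr}(G\,S\,G^T)$ with $S\ge0$ built from ${P^*}^{-1}$ and $P$. Natural candidates are
\[
S={P^*}^{-1}P{P^*}^{-1}\quad\text{or}\quad S={P^*}^{-1/2}\bigl({P^*}^{-1/2}P{P^*}^{-1/2}\bigr){P^*}^{-1/2}.
\]
The difficulty is that $T$ is not obviously symmetric in its factors.

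If this direct route fails, my fallback is to regroup the cubic term differently. One option is to use the identity ${P^*}^{-1}\tilde P{P^*}^{-1}=P^{*-1}P{P^*}^{-1}-{P^*}^{-1}$. The other is to absorb a nonnegative part into $-2\,\mathrm{tr}({P^*}^{-1}\tilde PD\tilde P)\le0$, so that the remainder becomes a product of positive semidefinite matrices, whose trace is nonnegative.

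\textbf{Step 4 (exponential rate).} Set $M:={P^*}^{-1/2}\tilde P{P^*}^{-1/2}$, so that $V=\mathrm{tr}(M^2)$. The remaining dissipation term equals $\mathrm{tr}(M\,{P^*}^{-1/2}Q{P^*}^{-1/2}\,M)$, which is bounded below by $\lambda_{\min}({P^*}^{-1/2}Q{P^*}^{-1/2})\,\mathrm{tr}(M^2)$. Since
\[
\lambda_{\min}\bigl({P^*}^{-1/2}Q{P^*}^{-1/2}\bigr)=\|{P^*}^{1/2}Q^{-1}{P^*}^{1/2}\|^{-1}=\rho^{-1},
\]
we obtain $\dot V\le-2\rho^{-1}V$. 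The comparison lemma then gives $V(t)\le e^{-2t/\rho}V(0)$. Because $V$ is equivalent to $\|\tilde P\|_F^2$ up to constants involving $\|{P^*}^{-1}\|$, this yields exponential convergence of $P(t)$ to $P^*$.
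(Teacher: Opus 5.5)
Your Steps 1, 2 and 4 are correct and follow the paper's proof. The paper works in the coordinates $Z={P^*}^{-1/2}\tilde P{P^*}^{-1/2}$, but the computation is the same. The gap is Step 3. The first inequality in \eqref{eqn: Dissipation rate for DRE} is the actual content of the lemma. You reduce it to $T\ge 0$ and then only list candidate matrices $S$ and fallback regroupings, without verifying any of them, so as written $T\ge0$ is unproved. The difficulty you ran into comes from how you combined the two $D$-terms. Factoring out the common left factor ${P^*}^{-1}\tilde P$ produced the awkward $\mathrm{tr}\bigl({P^*}^{-1}\tilde P{P^*}^{-1}PD\tilde P\bigr)$.

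The fix takes one line, and your first fallback identity already contains it. Factor out the common right factor $\tilde PD\tilde P$ instead:
\begin{align*}
&-2\,\mathrm{tr}\bigl({P^*}^{-1}\tilde PD\tilde P\bigr)-2\,\mathrm{tr}\bigl({P^*}^{-1}\tilde P{P^*}^{-1}\tilde PD\tilde P\bigr)\\
&\quad=-2\,\mathrm{tr}\bigl(({P^*}^{-1}+{P^*}^{-1}\tilde P{P^*}^{-1})\,\tilde PD\tilde P\bigr)\\
&\quad=-2\,\mathrm{tr}\bigl({P^*}^{-1}P{P^*}^{-1}\,\tilde PD\tilde P\bigr).
\end{align*}
Equivalently, ${P^*}^{-1}\tilde P={P^*}^{-1}P-I$ commutes with ${P^*}^{-1}P$, so your $T$ equals this same expression.

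Hence $T=\mathrm{tr}(GSG^T)$ with $S={P^*}^{-1}P{P^*}^{-1}$, which is your first candidate. Your second candidate is literally the same matrix. By invariance of the positive semidefinite cone, $P(t)\ge0$, so $S\ge 0$. Also $\tilde PD\tilde P\ge 0$. Therefore $T=\mathrm{tr}\bigl(S^{1/2}\tilde PD\tilde PS^{1/2}\bigr)\ge0$.

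This is exactly the paper's step. In $Z$ coordinates it reads $\mathrm{tr}\bigl((I+Z)Z\tilde DZ\bigr)=\mathrm{tr}\bigl((I+Z)^{1/2}Z\tilde DZ(I+Z)^{1/2}\bigr)\ge0$, where $I+Z={P^*}^{-1/2}P{P^*}^{-1/2}\ge 0$ and $\tilde D={P^*}^{1/2}D{P^*}^{1/2}$. With this line inserted, your argument is complete and coincides with the paper's.
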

\BP
    To begin with, we note 
the invariance property of the positive semidefinite cone with respect to the DRE \eqref{eqn: DRE}. In particular, from \cite[Theorem 4.1.6]{abou2012matrix} it follows that
\begin{equation}\label{inv-psd}
P(0)\geq 0 \Rightarrow P(t)\geq 0, \quad \text{for all } t \geq 0.
\end{equation}
Next, we rewrite the DRE \eqref{eqn: DRE} using the coordinate transformation $\tilde{P}:=P-P^*$ as
\begin{equation}\label{eqn: Shifted DRE}
    \dot{\tilde{P}}=A^T_{cl}\tilde{P} + \tilde{P} A_{cl} - \tilde{P}D\tilde{P},
\end{equation}
where
 $   D:=BR^{-1}B^T, \quad A_{cl}:=A-DP^*.$
Writing the invariance property \eqref{inv-psd} in the new coordinates yields
\begin{equation}\label{eqn: Invariance of shifted PSD cone}
    \tilde{P}(0) \geq -P^* \implies \tilde{P}(t) \geq -P^*,~ \forall t>0.
\end{equation}
In order to simplify the analysis, we define a second transformation $Z:={P^*}^{-1/2}\tilde{P}{P^*}^{-1/2}$, and write the dynamics in $Z$ as 
\begin{equation}\label{eqn: Z dyn}
    \dot{Z}=\tilde{A}^T_{cl}Z+Z\tilde{A}_{cl}-Z\tilde{D}Z,
\end{equation}
where $\tilde{A}_{cl}:={P^*}^{1/2} A_{cl} {P^*}^{-1/2}$, and $\tilde{D}:={P^*}^{1/2} D {P^*}^{1/2}$. \\
Noting that $P^*$ is the unique stabilizing solution to the associated algebraic Riccati equation \eqref{eqn: ARE}, we have
\begin{align*}
    A^TP^*+P^*A+Q-P^*DP^*&=0,\\
    A^T_{cl}P^*+P^*A_{cl}+Q+P^*DP^*&=0.
\end{align*}
Hence, by multiplying the second equality from the left and right by 
${P^*}^{-1/2}$, we obtain
\begin{equation}\label{eqn: Zero state relation DRE}
    -\tilde{A}^T_{cl} - \tilde{A}_{cl} -\tilde{D}= {P^*}^{-1/2}Q{P^*}^{-1/2} :=\tilde{Q}.
\end{equation}
Consider the Lyapunov candidate function $V(\tilde{P})$ defined in \eqref{eqn: Lyapunov function for DLE}. In $Z$ coordinates, $V=\tr(Z^2)$ and 
\begin{align*}
    \dot{V}&=2 \mathrm{tr}(Z\dot{Z})\\
           &\overset{\eqref{eqn: Z dyn}}{=}2 \mathrm{tr}(Z\tilde{A_{cl}}^TZ+Z^2\tilde{A}_{cl}-Z^2\tilde{D}Z)\\
           &\overset{\eqref{eqn: Cyclic property of trace}}{=}2 \mathrm{tr}\left(Z(\tilde{A}^T_{cl}+\tilde{A}_{cl}-Z\tilde{D})Z\right)\\
           &\overset{\eqref{eqn: Zero state relation DRE}}{=}-2\mathrm{tr}\left(Z(\tilde{D}+Z\tilde{D}+\tilde{Q})Z\right)\\
           &=-2\mathrm{tr}(Z\tilde{Q}Z) -2\mathrm{tr}((I+Z)(Z\tilde{D}Z))\\
           &\overset{\eqref{eqn: Cyclic property of trace}}{=}-2\mathrm{tr}(Z\tilde{Q}Z) -2\mathrm{tr}((I+Z)^{1/2}Z\tilde{D}Z(I+Z)^{1/2}),
\end{align*}
where $(I+Z)^{1/2}$ is well-defined, since $Z(t)+I={P^*}^{-1/2}(\tilde{P}(t)+P^*){P^*}^{-1/2} \geq 0$ as a consequence of \eqref{eqn: Invariance of shifted PSD cone}. Hence, we conclude that
\begin{align}\label{eqn: Quadratic convergence of RDE}
\nonumber
    \dot{V}&\leq -2\mathrm{tr}(Z\tilde{Q}Z) \leq -2 \sigma_{\rm min}(\tilde{Q}) V\\
    &=-2 \sigma_{\rm min}(\tilde{Q})
    \mathrm{tr}({P^*}^{-1/2}\tilde{P}{P^*}^{-1}\tilde{P}{P^*}^{-1/2})
\end{align}
for all $\tilde{P} \geq -P^*$.
Bearing in mind the invariance property \eqref{eqn: Invariance of shifted PSD cone}, exponential convergence of $P$ to $P^*$ follows from \eqref{eqn: Quadratic convergence of RDE}. In addition, \eqref{eqn: Quadratic convergence of RDE} implies \eqref{eqn: Dissipation rate for DRE}, noting that $\sigma_{\rm min}(\tilde{Q})=1/||{\tilde {Q}^{-1}}||=\rho^{-1}$.
\EP

Next, we require the vectorized version of Lemma \ref{lem: ES of DRE}, i.e., the Lyapunov function \eqref{eqn: Lyapunov function for DRE} and its dissipation rate \eqref{eqn: Dissipation rate for DRE} in terms of the vector  $\tilde{p}:=\mathrm{vec}(\tilde{P})$.
Define the linear map $\phi^r_D:\R^{n^2}\to\R^{n^2 \times n^2}$ as 
\begin{equation}\label{eqn:phi-r}
\phi^r_D(v)=I_n \otimes (\mathrm{mat}(v)D).
\end{equation}
Since we deal with the same matrix $D$ throughout this work, we drop the suffix $D$ for brevity, and denote the function by $\phi^r(.)$. We also define
\begin{equation}\label{eqn: Abar_cl}
        \bar{A}_{cl}:=I \otimes A^T_{cl} + A^T_{cl} \otimes I.
\end{equation}
Then the vectorized dynamics of $\tilde{P}$, i.e., vectorized form of \eqref{eqn: Shifted DRE} is given by 
\begin{equation}\label{eqn: Shifted VDRE}
    \dot{\tilde{p}}=(I \otimes A^T_{cl} + A^T_{cl} \otimes I)\tilde{p} -\phi^r(\tilde{p})\tilde{p}=: (\bar{A}_{cl}-\phi^r(\tilde{p}))\tilde{p}.
\end{equation}
Using properties of trace and Kronecker product, the Lyapunov function \eqref{eqn: Lyapunov function for DRE} and its rate of change \eqref{eqn: Dissipation rate for DRE} are written in terms of $\tilde{p}$ as
\begin{equation}\label{eqn: Lyapunov function for VDRE}
    V_R(\tilde{p})=\tilde{p}^T \underbrace{(P^{*^{-1}} \otimes P^{*^{-1}})}_{=:P_R}\tilde{p},
\end{equation}
and 
\begin{equation}\label{eqn: Dissipation rate for VDRE}
    \dot{V}_R(\tilde{p})\leq -2 \tilde{p}^T \underbrace{(P^{*^{-1}} \otimes P^{*^{-1}}QP^{*^{-1}})}_{=:\frac{1}{2}Q_R} \tilde{p},
\end{equation}
for all $\tilde{p}$ such that $\mathrm{mat}(\tilde{p})\geq -P^*$.
The equations \eqref{eqn: Lyapunov function for VDRE} and \eqref{eqn: Dissipation rate for VDRE} imply that for all $x \in \{x|\mathrm{mat}(x)\geq -P^*\}$, we have
\begin{equation}\label{eqn: Lyapunov type inequality for DRE}
    (\bar{A}_{cl}-\phi^r(x))^TP_R+P_R (\bar{A}_{cl}-\phi^r(x)) \leq -Q_R.
\end{equation}

Finally, we perform some algebraic manipulations similar to those in Section \ref{sec: Dist sol to LE} and obtain equivalent dynamics to \eqref{eqn: Distributed DRE with PI coupling} in a suitable form, whose structure is instrumental in the proof of Theorem \ref{thm: Asymptotic convergence to solution of ARE}.

\begin{lemma}\label{lem: Simplified dynamics DRE}
Let $\bar A_{cl}$ be given by \eqref{eqn: Abar_cl} and $\phi^r(.)$ by \eqref{eqn:phi-r}.
The dynamics \eqref{eqn: Distributed DRE with PI coupling} is equivalent to the nonlinear dynamics
\begin{align}\label{eqn: Compact dynamics DRE}
    \begin{split}
        &\dot{\xi}=A_{\xi}\xi-g(\xi)\xi,\\
        &\dot{\xi}_4=0,
    \end{split}
\end{align}
        where $\xi:=\col(\xi_1,\xi_2,\xi_3)$,
        \begin{equation*}A_{\xi}=\begin{bmatrix}
        \bar{A}_{cl} & A_{12} & \mathbf{0} \\
        A_{21} & A_{22}-\gamma A_{23} & -\gamma A_{23}\\
        \mathbf{0} & \gamma A_{23} & \mathbf{0}
    \end{bmatrix},
    \end{equation*} 
    \begin{equation*}
    g(\xi)=\begin{bmatrix}
        -\phi^r(\xi_1) & -g_{12}(\xi_2) & \mathbf{0} \\
        \mathbf{0} & -g_{21}(\xi_1)-g_{22}(\xi_2) & \mathbf{0}\\
        \mathbf{0} & \mathbf{0} & \mathbf{0}
    \end{bmatrix},
    \end{equation*}
for some matrices $ A_{23} > 0, A_{12}, A_{21}, A_{22}$, and linear functions $g_{12}, g_{21}, g_{22}$.
Moreover, $(P_i(0), Y_i(0))=(\0,\0),~\forall i \in \I$ is mapped to 
$\xi_1(0)=-\mathrm{vec}(P^*)$, $\xi_2(0)=0$, $\xi_3(0)=-\frac{1}{\gamma}A^{-1}_{23}A_{21}\mathrm{vec}(P^*)$, and $\xi_4(0)=0$.
\end{lemma}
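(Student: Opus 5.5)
The plan is to vectorize \eqref{eqn: Distributed DRE with PI coupling}, shift to a suitable equilibrium, and then apply an orthogonal change of coordinates that separates the network average from the disagreement directions. This is the same route as in Lemma \ref{lem: DLE Vectorization and simplification}, now carrying the quadratic term along.

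\textbf{Step 1 (vectorize).} Let $p_i:=\mathrm{vec}(P_i)$, $y_i:=\mathrm{vec}(Y_i)$, and let $p,y$ denote the stacked vectors. Let $L$ be the Laplacian of the undirected connected graph and $\bar A_i:=I\otimes A_i^T+A_i^T\otimes I$. Then \eqref{eqn: Distributed DRE with PI coupling} reads
\[
\dot p=\blkdiag(N\bar A_i)p+\mathds{1}_N\otimes\mathrm{vec}(Q)-\col(\mathrm{vec}(P_iDP_i))-\gamma(L\otimes I)(p+y),\qquad \dot y=\gamma(L\otimes I)p .
\]
Both sides of the $P_i$-equation are symmetric whenever all $P_j$ are symmetric, and the initial data are symmetric. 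Hence all iterates stay symmetric, and I will use $P_i=P_i^T$ freely.

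\textbf{Step 2 (equilibrium and shift).} Take $p^*:=\mathds{1}_N\otimes\mathrm{vec}(P^*)$. I then need $y^*$ with
\[
\gamma(L\otimes I)y^*=\blkdiag(N\bar A_i)p^*+\mathds{1}_N\otimes\mathrm{vec}(Q-P^*DP^*).
\]
Applying $\mathds{1}_N^T\otimes I$ to the right-hand side gives the vectorized ARE \eqref{eqn: Distributed ARE}, which is zero since $\sum_iA_i=A$. So the right-hand side lies in $\mathrm{range}(L\otimes I)$, and such a $y^*$ exists.

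Set $\tilde P_i:=P_i-P^*$. The quadratic term then splits as
\[
P_iDP_i-P^*DP^*=\tilde P_iDP^*+P^*D\tilde P_i+\tilde P_iD\tilde P_i .
\]

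\textbf{Step 3 (coordinates).} Pick an orthogonal $U=[\tfrac{1}{\sqrt N}\mathds{1}_N,\;U_2]$ with $U_2^TLU_2=\Lambda>0$, which is possible by connectedness. Define
\[
\xi_1:=\tfrac1N(\mathds{1}^T\otimes I)\tilde p,\quad \xi_2:=(U_2^T\otimes I)\tilde p,\quad \xi_3:=(U_2^T\otimes I)\tilde y,\quad \xi_4:=\tfrac1N(\mathds{1}^T\otimes I)\tilde y .
\]
This map is invertible. Since $\mathds{1}^TL=0$, we get $\dot\xi_4=0$. Since $\tilde p=\mathds{1}\otimes\xi_1+(U_2\otimes I)\xi_2$, we get $\dot\xi_3=\gamma(\Lambda\otimes I)\xi_2$. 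The coupling term in the $\xi_2$-equation becomes $-\gamma(\Lambda\otimes I)(\xi_2+\xi_3)$, which gives $A_{23}:=\Lambda\otimes I>0$.

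For the linear part of the $\xi_1$-equation, averaging $N\bar A_i$ over $i$ yields $\bar A:=I\otimes A^T+A^T\otimes I$, acting on $\xi_1$. The linear $D$-terms contribute $-(I\otimes P^*D+P^*D\otimes I)\xi_1$, and together these give exactly $\bar A_{cl}$. The remaining contributions define $A_{12}$, $A_{21}$, $A_{22}$.

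\textbf{Step 4 (quadratic bookkeeping — main obstacle).} Write $\tilde P_i=\Xi_1+\sum_k(U_2)_{ik}\Xi_{2,k}$, where $\Xi_1=\mathrm{mat}(\xi_1)$ and $\Xi_{2,k}$ are the blocks of $\xi_2$.

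In the average $\frac1N\sum_i\tilde P_iD\tilde P_i$, the cross terms in $\Xi_1,\Xi_2$ vanish because the columns of $U_2$ are orthogonal to $\mathds{1}$. What remains is $\Xi_1D\Xi_1$, whose vectorization is $\phi^r(\xi_1)\xi_1$, plus a term quadratic in $\xi_2$. I will write the latter as $g_{12}(\xi_2)\xi_2$, with $g_{12}$ linear, using
\[
\mathrm{vec}(XDY)=(I\otimes XD)\mathrm{vec}(Y).
\]

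In the projection $(U_2^T\otimes I)$, the $\Xi_1D\Xi_1$ term is identical across agents and is annihilated. The cross terms are bilinear in $(\xi_1,\xi_2)$ and become $g_{21}(\xi_1)\xi_2$, where I use symmetry to write $\mathrm{vec}(\Xi_2D\Xi_1)=(\Xi_1D\otimes I)\mathrm{vec}(\Xi_2)$. The $\xi_2$–$\xi_2$ terms become $g_{22}(\xi_2)\xi_2$. This is where care is needed to match exactly the block pattern of $g(\xi)$.

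\textbf{Step 5 (initial condition).} With $P_i(0)=Y_i(0)=\0$ we have $\tilde p(0)=-p^*$, so $\xi_1(0)=-\mathrm{vec}(P^*)$, $\xi_2(0)=0$, and $\tilde y(0)=-y^*$.

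Projecting the equilibrium equation of Step 2 with $U_2^T\otimes I$ makes the constant terms $Q$ and $P^*DP^*$ vanish. What remains is $\gamma A_{23}(U_2^T\otimes I)y^*=A_{21}\mathrm{vec}(P^*)$, because the $D$-linear part of $A_{21}$ is agent-independent and is likewise annihilated. Hence
\[
\xi_3(0)=-\tfrac1\gamma A_{23}^{-1}A_{21}\mathrm{vec}(P^*).
\]
The value of $\xi_4(0)$ is fixed by the choice of $y^*$, which is free up to a multiple of $\mathds{1}\otimes\cdot$; choosing $y^*$ with zero average gives $\xi_4(0)=0$.
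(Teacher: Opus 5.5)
Your proposal is correct and follows essentially the paper's route. You vectorize, pass to average/disagreement coordinates via an orthonormal complement $U_2$ of $\mathds{1}_N$, shift by the equilibrium built from the stabilizing $P^*$, and use $U_2^T\mathds{1}_N=0$ and $U_2^TU_2=I$ to sort the quadratic terms into $-\phi^r(\xi_1)\xi_1-g_{12}(\xi_2)\xi_2$ in $\dot\xi_1$ and $-(g_{21}(\xi_1)+g_{22}(\xi_2))\xi_2$ in $\dot\xi_2$.

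The differences are cosmetic. You shift before projecting and let $\xi_4$ be the average of the shifted $Y$, which is why you need the zero-mean choice of $y^*$; the paper shifts after projecting and keeps $\xi_4=\bar{\mathbf{y}}$ unshifted. You also invoke symmetry of $\Xi_1$, whereas the paper's $\phi^{\ell}(v)=(D\,\mathrm{mat}(v))^T\otimes I_n$ makes that unnecessary.
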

\BP
    See Appendix \ref{apx: Dist alg for ARE}
\EP  

\BPof{Theorem \ref{thm: Asymptotic convergence to solution of ARE}}
    We prove the claimed convergence properties on the equivalent representation \eqref{eqn: Compact dynamics DRE}. Since $\dot{\xi}_4=0$ and $\xi_4$ does not appear elsewhere, we consider only the state variables $\xi_1, \xi_2, \xi_3$ in the rest of the proof. Consider the Lyapunov function candidate
    \begin{equation}
        V(\xi):=\xi^T\begin{bmatrix}
            P_R & \0 & \0\\
            \0 & \alpha+\beta & \beta\\
            \0 & \beta & \alpha+2\beta
        \end{bmatrix}\xi,    
    \end{equation}
    for some $\alpha>0,\beta>0$ to be chosen later, where $P_R$ is defined in \eqref{eqn: Lyapunov function for VDRE}. It is easy to see that $V$ is positive definite. 
    To establish the result, we show that there exists a subset $S$ of the state space such that
    \begin{enumerate}
        \item[(P1)] $\dot{V}< 0$, $\forall \xi \in S \backslash \{0\}$,
        \item[(P2)] $S$ is invariant with respect to \eqref{eqn: Compact dynamics DRE}, and
        \item[(P3)] $\xi(0):=\big(-\mathrm{vec}(P^*),0,-\frac{1}{\gamma}A^{-1}_{23}A_{21}\mathrm{vec}(P^*)\big) \in S$.
    \end{enumerate}
Establishing (P1): We have, 
\begin{align*}
    \dot{V}=&\xi^T_1(\bar{A}_{cl}-\phi^r(\xi_1))^TP_R\xi_1+\xi^T_1P_R (\bar{A}_{cl}-\phi^r(\xi_1))\xi_1\\+&\xi^T_1P_R(A_{12}-g_{12}(\xi_2))\xi_2 + \xi^T_2(A^T_{12}-g^T_{12}(\xi_2))P_R\xi_1 \\
    +&2 (\alpha+\beta) \xi^T_2A_{21} \xi_1 -2(\alpha+\beta)\gamma\xi^T_2A_{23}\xi_3 \\+& 2 (\alpha+\beta) \xi^T_2(A_{22}-\gamma A_{23}-g_{21}(\xi_1)-g_{22}(\xi_2)) \xi_2 \\
    +&2 (\alpha+2\beta)\gamma \xi^T_3A_{23}\xi_2 + 2\beta \gamma \xi^T_2 A_{23} \xi_2+2\beta \xi^T_3 A_{21}\xi_1\\
    +&2\beta \xi^T_3(A_{22}-\gamma A_{23}-g_{21}(\xi_1)-g_{22}(\xi_2))\xi_2\\-&2\beta \gamma \xi^T_3 A_{23}\xi_3.
    \end{align*}
    Using the inequality \eqref{eqn: Lyapunov type inequality for DRE}, we have
    \begin{align*}
    \dot{V}&\leq -\xi^T_1Q_R\xi_1 - 2 \alpha \gamma \xi^T_2A_{23}\xi_2 -2\beta \gamma \xi^T_3A_{23}\xi_3 \\&+ 2\beta \xi^T_3A_{21}\xi_1 + 2\beta \xi^T_2(A^T_{22}-g^T_{21}(\xi_1)-g^T_{22}(\xi_2))\xi_3 \\&+  2(\alpha+\beta) \xi^T_2 \underbrace{(A_{22}-g_{21}(\xi_1)-g_{22}(\xi_2))}_{=:h_{22}(\xi_1,\xi_2)}\xi_2 \\&+2\xi^T_1(\overbrace{P_RA_{12}-P_Rg_{12}(\xi_2)}^{=:h_{12}(\xi_2)} +(\alpha+\beta)A^T_{21})\xi_2,
\end{align*}
for all $
\xi \in \Psi:=\{(\xi_1,\xi_2,\xi_3)|\; \mat(\xi_1) \geq -P^*\}.
$
This can be written compactly as
$\dot{V} \leq -\xi^T H(\xi_1,\xi_2) \xi$, for all  $\xi \in \Psi$, where
\[H(\xi_1,\xi_2):=\begin{bmatrix}   \begin{smallmatrix}
        Q_R & -h_{12}-(\alpha+\beta) A^T_{21} & -\beta A^T_{21}\\[1mm] 
        * & - (\alpha+\beta) (h_{22}+h^T_{22})+2 \alpha \gamma A_{23}&  -\beta h^T_{22}\\[1mm]
        * & * & 2\beta \gamma A_{23}
        \end{smallmatrix}
    \end{bmatrix}.\]
Bearing in mind that $Q_R>0$, using Schur's complement, we have $H(\xi_1,\xi_2) > 0$ if and only if 
\begin{align*}\label{eqn: Schur's complement}
    & \begin{bmatrix}
        2 \alpha \gamma A_{23} - (\alpha+\beta) (h_{22}+h^T_{22}) &  -\beta h^T_{22}\\
        -\beta h_{22} & 2\beta \gamma A_{23}
    \end{bmatrix}-\\&\begin{bmatrix}
        (h^T_{12}+(\alpha+\beta)A_{21})Q^{-1}_R(\star) & *\\ \beta A_{21}Q^{-1}_R(h_{12}+(\alpha+\beta)A^T_{21}) & \beta^2 A_{21}Q^{-1}_R A^T_{21}
    \end{bmatrix}>0,
\end{align*}
with the abridged notation $TQ_R^{-1}(\star):=TQ_R^{-1}T^T$ valid for any matrix $T$.
We intersect $\Psi$ with the Lyapunov sublevel sets $\Omega_c:=\{\xi|\; V(\xi)\leq c\}$, and define $S_c:=\Psi \cap \Omega_c$. 
For any given $\alpha,\beta, c>0$, 
there exists a $\gamma_{\alpha,\beta,c}>0$ such that $H(\xi_1,\xi_2) > 0$,  for all $\gamma>\gamma_{\alpha,\beta,c}$ and $\xi \in S_c$.
This follows from the fact that
$h_{12}$ and $h_{22}$ are affine functions and thus remain bounded on any bounded set. Moreover, we have used the fact that $\gamma$ appears only in the first matrix in the above inequality, while the second matrix is independent of $\gamma$. 
Consequently, $\dot{V} < 0$ for all $\xi \in S_c$, provided that $\gamma>\gamma_{\alpha,\beta,c}$. This establishes the first property (P1) with $S\equiv S_c$.  

Establishing (P2): Next, we proceed with establishing the invariance of $S_c$ by restricting the value of $c$. We note that since $\xi=0$ belongs to the interior of $\Psi$, the set $S_c$ becomes invariant for arbitrarily small values of $c$. However, such values  potentially exclude $\xi(0)$ from the guaranteed region of attraction, thereby violating (P3).
To circumvent this, we establish invariance of the set $S_c$ 
without restricting $c$ to be arbitrarily small. 
As a first step towards this goal, we note that the dynamics of $\xi_1$ given in \eqref{eqn: Compact dynamics DRE}, namely $\dot{\xi}_1=(\bar{A}_{cl}-\phi^r(\xi_1))\xi_1 + (A_{12}-g_{12}(\xi_2))\xi_2$, can be viewed as the dynamics \eqref{eqn: Shifted VDRE} perturbed by the term $(A_{12}-g_{12}(\xi_2))\xi_2$. Switching to matrix form, the dynamics of $\Xi_1:=\mat(\xi_1)$ given by
\begin{equation}\label{eqn: Xi1 dyn}
    \dot{\Xi}_1=A^T_{cl}\Xi_1+\Xi_1A_{cl}-\Xi_1D\Xi_1+G(\xi_2),
\end{equation}
can be interpreted as a perturbed version of \eqref{eqn: Shifted DRE} with the perturbation given by $G(\xi_2):=\mat((A_{12}-g_{12}(\xi_2))\xi_2)$. Observe that the cone $\{\tilde{P}|\tilde{P} \geq -P^*\}$ is invariant with respect to \eqref{eqn: Shifted DRE} (\cite[Theorem 4.1.6]{abou2012matrix}).
If $||G(\xi_2)|| \leq \sigma_{\rm min}(Q)$, then the cone $\{\Xi_1|\Xi_1 \geq -P^*\}$ remains invariant with respect to \eqref{eqn: Xi1 dyn}, following \cite[Theorem 4.1.6]{abou2012matrix}. It is easy to see from its definition that the matrix-valued function $G(\xi_2)$ is Lipschitz continuous in $\xi_2$. Hence, there exists a scalar $b>0$ such that $||\xi_2||\leq b \implies ||G(\xi_2)||\leq \sigma_{\rm min}(Q)$. Consequently, if $||\xi_2||\leq b,~ \forall t \geq 0$, then, the set $\{\Xi_1|\Xi_1\geq -P^*\}$ is invariant with respect to \eqref{eqn: Xi1 dyn}. Switching back to the vector form, we have that the set $\Psi \cap \Omega_{\alpha b^2}$ is invariant with respect to the dynamics \eqref{eqn: Compact dynamics DRE}, as $V(\xi_1,\xi_2,\xi_3)\leq \alpha b^2 \implies ||\xi_2||\leq b$. Note that in the latter implication, we used the fact that 
\[
\alpha ||\xi_2||^2 \leq \xi^T \Big(\blkdiag (P_R, \alpha, \alpha+\beta)\Big) \xi\leq V(\xi). 
\]
This completes establishing the second property (P2), with $S\equiv S_c$ and $c=\alpha b^2$.

Establishing (P3):
It remains to enforce that the given initial condition $\xi(0)$ belongs to the set $\Psi \cap \Omega_{\alpha b^2}$ by suitably choosing  $\alpha$, $\beta$ and $\gamma$.  
Clearly, $\xi(0)\in \Psi$, 
since $\mat(\xi_1(0))=-P^*$. 
Moreover $\xi(0)\in \Omega_{\alpha b^2}$ if and only if $V(\xi(0)) \leq \alpha b^2$, i.e.,
\[{p^*}^TP_R\,p^*+\left(\frac{\alpha+2\beta}{\gamma^2}\right) {p^*}^TA^T_{21}A^{-2}_{23}A_{21}p^* 
\leq \alpha b^2,\]
where $p^*:=\mathrm{vec}(P^*)$.
It is easy to see that the above inequality holds for any triple $(\alpha,\beta,\gamma)$ satisfying 
\begin{equation}\label{eqn: Lower bound on gamma}
\gamma^2>\frac{(\alpha+2\beta){p^*}^TA^T_{21}A^{-2}_{23}A_{21}p^*}{\alpha b^2-{p^*}^TP_Rp^*} >0. 
\end{equation}
This suggests selecting $\alpha> ({p^*}^TP_Rp^*)/b^2$. Once $\alpha$ is chosen, the above condition is satisfied for any
$\gamma > \bar \gamma_{\alpha,\beta}$, with $\bar \gamma_{\alpha,\beta}$ denoting the square root of the ratio in \eqref{eqn: Lower bound on gamma}. Combining this with the conditions resulting from properties (P1)-(P2), i.e. $\gamma >\gamma_{\alpha,\beta,c}$ with $c=\alpha b^2$, the proof is complete by setting $\gamma^*=\max(\bar \gamma_{\alpha,\beta},\, \gamma_{\alpha,\beta,c})$.
\EP

\section{Robustness}\label{sec: Robustness}
\subsection{Partially known input matrix}\label{subsec: Uncertainty}
Recall that the results in Section \ref{sec: Dist sol to ARE}  provide solutions to Problem \ref{prb: LQR for LTI}, under the assumption that the matrix $B$ is known.  
Here, we partially relax this assumption by allowing for some uncertainty in the input matrix $B$. This weakened assumption is formally stated next. 
\begin{assumption}\label{asm: Public knowledge of B with uncertainty}
    The true input matrix is given by $B=B_0+\Delta_B$, with $||\Delta_B||_2 \leq \varepsilon$, where $B_0$ and $\varepsilon$ are known but $\Delta_B$ is unknown.
\end{assumption}

The first bottleneck to extend the results to this case originates from the fact that the data-based splitting scheme in Section \ref{sec: Data distribution scheme and partial system identification}, and specifically the vector $\hat r(t_i)$ in \eqref{eqn: Def r_hat}, depend on the matrix $B$.
To address this challenge, we slightly modify the splitting scheme and strengthen Assumption \ref{asm: Full row rank of X_0} as follows.
\vspace{-5mm}
\begin{assumption}\label{asm: Full row rank of X,U} 
It holds that $\mathrm{rank}\begin{bmatrix}
    X_0\\U_0
\end{bmatrix}=n+m$, where
\vspace{-2mm}
\begin{align*}
    \begin{split}
        X_0 &:= \begin{bmatrix}
            x(t_1) & x(t_2) & \cdots & x(t_N)
        \end{bmatrix},\\
        U_0 &:= \begin{bmatrix}
            u(t_1) & u(t_2) & \cdots & u(t_N)
        \end{bmatrix}.
    \end{split}
\end{align*}
\end{assumption}

\vspace{-3mm}
This assumption routinely appears in data-driven control; see e.g. \cite{de2019formulas,dorfler2023certainty}.
Let $r(t_i)$ be defined as in \eqref{eqn: Subspace relations}. 
Then the system matrix satisfies 
\begin{subequations}\label{eqn: Equations for estimation of A ext}
    \begin{equation}\label{eqn: Equation of A ext}
            A= \sum_{i=1}^N r(t_i) y^T_i,    
        \end{equation}
        \text{ for any set of vectors $\{y_i\}_{i \in \I}$ such that }
        \begin{equation}\label{eqn: Right inverse of data matrix ext}
            I_{n \times n}= \sum_{i=1}^N x(t_i)y^T_i,
    \end{equation}
    \begin{equation}\label{eqn: Additional constraint on data matrix}        \0_{m \times n}=\sum_{i=1}^N u(t_i)y^T_i.
    \end{equation}
\end{subequations}
Note that Assumption \ref{asm: Full row rank of X,U} guarantees the existence of such $y_i$, $\forall i\in \I$.
This is analogous to \eqref{eqn: Equations for estimation of A}, with the newly added constraint \eqref{eqn: Additional constraint on data matrix}. In fact, this newly added constraint allows us to rewrite \eqref{eqn: Equation of A} as \eqref{eqn: Equation of A ext}, which does not directly depend on the matrix $B$.
As such, the distributed algorithm in Appendix \ref{sec: Appendix A} can be employed with $v(i):=\begin{bmatrix}
    x(t_i)\\u(t_i)
\end{bmatrix}, w(i):=y_i$, and $M:=\begin{bmatrix}
    I_{n \times n} \\ \0_{m \times n}
\end{bmatrix}$. Consequently, each agent $i\in \mathcal{I}$ can compute the term 
\begin{equation}\label{eqn: Distributed equation of A uncertain B}
    A_i:=r(t_i)y^T_i 
\end{equation} 
in terms of data,  and $A=\sum_{i=1}^N A_i$.

Due to the uncertainty in $B$, rather than computing the  stabilizing solution $P^*$ to \eqref{eqn: Distributed ARE}, we work with the nominal ARE
\begin{equation}\label{eqn: Distributed ARE with nominal B}
\left(\sum_{i=1}^N A^T_i \right)X+X\left(\sum_{i=1}^N A_i \right) + Q - X B_0R^{-1}B^T_0 X = 0,
\end{equation}
where the data-based matrix $A_i$ is given by \eqref{eqn: Distributed equation of A uncertain B}. 
Then, we can leverage the algorithm in Theorem \ref{thm: Asymptotic convergence to solution of ARE} with $B$ substituted by $B_0$ to find the unique stabilizing solution of \eqref{eqn: Distributed ARE with nominal B}, denoted by $P_0$, from the distributed data samples. 
The corresponding state feedback is given by $K_0:=-R^{-1}B^T_0P_0$. Next, we provide rigorous suboptimal guarantees for the controller $K_0$, inspired by the certainty-equivalence approach \cite{dorfler2023certainty}. 
\begin{proposition}\label{prop: Suboptimal uncertainty}
    Let Assumption \ref{asm: Public knowledge of B with uncertainty} hold. Denote the unique positive definite solution to \eqref{eqn: Distributed ARE with nominal B} by $P_0$, and the corresponding state feedback by $K_0:=-R^{-1}B^T_0P_0$. 
    If 
    \begin{equation}\label{eqn: Suboptimality - bound on uncertainty}
        \varepsilon < \frac{\sqrt{\sigma_{\rm min}(Q) \sigma_{\rm min}(R)}}{2 \rm{tr}(P_0)},
    \end{equation}
    then, $K_0$ is a stabilizing controller for the true system, with the corresponding LQR cost $J(K_0) \leq \frac{1}{\eta} \rm{tr}(P_0)$, where 
    \begin{equation}\label{eqn: Definition eta}
       \eta=1-2\varepsilon \frac{\tr(P_0)}{\sqrt{\sigma_{\rm min}(Q)\sigma_{\rm min}(R)}} .
    \end{equation}
\end{proposition}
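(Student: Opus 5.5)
Since $P_0$ is the stabilizing solution of \eqref{eqn: Distributed ARE with nominal B} and $\sum_i A_i=A$, we have the closed-loop identity for the nominal pair $(A,B_0)$:
\[
(A+B_0K_0)^TP_0+P_0(A+B_0K_0)+Q+K_0^TRK_0=0 .
\]
The plan is to use $P_0$ as a Lyapunov certificate for the true closed loop $A+BK_0=A+B_0K_0+\Delta_BK_0$. Substituting gives
\[
(A+BK_0)^TP_0+P_0(A+BK_0)+Q+K_0^TRK_0=K_0^T\Delta_B^TP_0+P_0\Delta_BK_0=:E .
\]
The task is then to show that the right-hand side is dominated by a fraction of $Q+K_0^TRK_0$:
\[
E\le(1-\eta)(Q+K_0^TRK_0).
\]
With this bound, $(A+BK_0)^TP_0+P_0(A+BK_0)+\eta(Q+K_0^TRK_0)\le0$. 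Since $\eta>0$ by \eqref{eqn: Suboptimality - bound on uncertainty} and $Q>0$, this yields stability of $A+BK_0$. The cost bound follows from comparison with the Lyapunov equation for the true cost matrix $P_K$:
\[
(A+BK_0)^TP_K+P_K(A+BK_0)+Q+K_0^TRK_0=0 .
\]
This gives $\eta P_K\le P_0$, and hence $J(K_0)=\tr(P_K)\le\tr(P_0)/\eta$, using $\mathbb{E}[x_0x_0^T]=I$.

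The main step is bounding the cross term. For any $x$ I would use Young's inequality with a free parameter $\mu>0$:
\[
2x^TK_0^T\Delta_B^TP_0x\le \mu\, x^TK_0^TRK_0x+\mu^{-1}\|R^{-1/2}\|^2\|\Delta_B\|^2\|P_0\|^2\|x\|^2 .
\]
Then I bound $\|x\|^2\le x^TQx/\sigma_{\rm min}(Q)$, $\|\Delta_B\|\le\varepsilon$, and $\|P_0\|\le\tr(P_0)$ (valid since $P_0>0$). This gives
\[
E\le\mu K_0^TRK_0+\frac{\varepsilon^2\tr(P_0)^2}{\mu\,\sigma_{\rm min}(R)\sigma_{\rm min}(Q)}\,Q .
\]
Balancing the two coefficients by choosing $\mu=\varepsilon\tr(P_0)/\sqrt{\sigma_{\rm min}(Q)\sigma_{\rm min}(R)}$ makes both equal to $\mu$. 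So $E\le\mu(Q+K_0^TRK_0)$ with $\mu=(1-\eta)/2$, which is stronger than needed.

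Getting the constant to match the statement's $2\varepsilon\tr(P_0)/\sqrt{\cdot}$ precisely is the step to check. Choosing $\mu=1-\eta$ gives the coefficient of $Q$ as $(1-\eta)/4\le1-\eta$, so either way the inequality holds with margin, and the stated $\eta$ is conservative but valid. The condition \eqref{eqn: Suboptimality - bound on uncertainty} is exactly $\eta>0$, so the argument closes.
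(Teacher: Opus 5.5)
Your proof is correct, but it works on the opposite side of the duality from the paper's.

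The paper never uses $P_0$ as a Lyapunov function for $A+BK_0$. Instead it introduces the controllability Gramian $W_0$ of the nominal loop, defined by $(A+B_0K_0)W_0+W_0(A+B_0K_0)^T+I=0$. It then uses the $H_2$ trace identity $\tr(P_0)=\tr\big((Q+K_0^TRK_0)W_0\big)$ to get $\|W_0\|\le\tr(P_0)/\sigma_{\rm min}(Q)$ and $\|K_0W_0\|\le\tr(P_0)/\sqrt{\sigma_{\rm min}(Q)\sigma_{\rm min}(R)}$. With these it bounds the perturbation as $\Delta_BK_0W_0+W_0K_0^T\Delta_B^T\le(1-\eta)I$, i.e. against the initial-state covariance. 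It concludes $W\le W_0/\eta$ and hence $J(K_0)=\tr\big((Q+K_0^TRK_0)W\big)\le\tr(P_0)/\eta$.

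You instead perturb the closed-loop Lyapunov equation for $P_0$ itself. You absorb the cross term $P_0\Delta_BK_0+K_0^T\Delta_B^TP_0$ into $Q+K_0^TRK_0$ via Young's inequality. This avoids the Gramian and the trace identity entirely. It also yields a matrix bound $P_K\le P_0/\eta$, so the cost bound holds for any initial-state covariance, not only $I$.

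Your route is also less conservative than the statement requires. Your own computation gives $E\le\frac{1-\eta}{2}(Q+K_0^TRK_0)$. That already implies $J(K_0)\le 2\tr(P_0)/(1+\eta)\le\tr(P_0)/\eta$. If you keep $\|P_0\|$ instead of relaxing it to $\tr(P_0)$, the same argument gives stability whenever $\varepsilon\|P_0\|<\sqrt{\sigma_{\rm min}(Q)\sigma_{\rm min}(R)}$.

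The paper's Gramian argument follows the certainty-equivalence template it cites and is reused for the noisy-data case. Your argument adapts equally well there, with the perturbation $\Delta_A^TP_0+P_0\Delta_A$.

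One small point: your choice of $\mu$ requires $\varepsilon>0$. The case $\varepsilon=0$ is trivial, since then $\Delta_B=0$ and $E=0$, so it should simply be set aside before invoking Young's inequality.
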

\BP 
 Consider the closed-loop system obtained by applying the state feedback $K_0$ to the system in \eqref{eqn: LTI}, appended with an output  
\begin{align*}
    \begin{split}
        \dot{x}(t)&=(A+BK_0)x(t)+x_0\delta(t),\\
        y(t)&=(Q+K^T_0RK_0)^{1/2}x(t),
    \end{split}
\end{align*}
where $\delta(\cdot)$ denotes the Dirac delta function. Assuming that $A+BK_0$ is Hurwitz, the expected quadratic cost
\begin{equation*}
    \mathbb{E}\left[\int_0^{\infty} x^T(t) \left(Q+K^T_0RK_0\right)x(t) dt \right]
\end{equation*}
is equal to the squared $H_2$ norm of the transfer function from the impulse input $x_0 \delta(t)$ to the output $y(t)$. Therefore,
\begin{equation}\label{eqn: J(K_0)}
    J(K_0)=\tr(QW+K^T_0RK_0W),
\end{equation}
where $W>0$ is the unique solution to the Lyapunov equation $(A+BK_0)W+W(A+BK_0)^T+I=0$ (see \eqref{eqn: H2 norm}). 
Next, we bound the terms on the right hand side of \eqref{eqn: J(K_0)} to establish the claim of the proposition.
We first rewrite \eqref{eqn: Distributed ARE with nominal B} using $P_0$ and $K_0$ as
\begin{equation*}
    (A+B_0K_0)^TP_0+P_0(A+B_0K_0)+Q+K^T_0RK_0=0.
\end{equation*}
Then, $P_0$ coincides with the observability Gramian of the auxiliary system  
  \begin{align*}
      \begin{split}
          \dot{x}_{\rm aug}(t)&=(A+B_0K_0)x_{\rm aug}(t) + v(t),\\
          y_{\rm aug}(t)&=(Q+K^T_0RK_0)^{1/2}x_{\rm aug}(t),
      \end{split}
  \end{align*}
  and using the trace equality in \eqref{eqn: H2 norm}, we have
\begin{equation}\label{eqn: Cost equivalence ARE and Gramian}
    \tr(P_0)=\tr(QW_0+K^T_0RK_0W_0),
\end{equation}
where $W_0>0$ satisfies
\begin{equation}\label{eqn: Gramian for nominal system}
    (A+B_0K_0)W_0 + W_0(A+B_0K_0)^T + I = 0. 
\end{equation}
Consequently, we have 
\begin{align}\label{eqn: Bounds on part of sum}
    \begin{split}
        0 &\leq \rm{tr}(QW_0) \leq \rm{tr}(P_0),\\
        0 &\leq \rm{tr}(K^T_0RK_0W_0) \leq \rm{tr}(P_0).
    \end{split}
\end{align}
In addition, we have
\begin{multline*}
    \rm{tr}(QW_0) = \rm{tr}(W^{1/2}_0QW^{1/2}_0) \geq \sigma_{\rm min}(Q)\rm{tr}(W_0) \\\geq \sigma_{\rm min}(Q)||W_0||_2.
\end{multline*}
Therefore, 
\begin{equation}\label{eqn: Bound on W0}
    ||W_0||_2 \leq \frac{\rm{tr}(Q W_0)}{\sigma_{\rm min}(Q)} \leq \frac{\rm{tr}(P_0)}{\sigma_{\rm min}(Q)}.
\end{equation}
Moreover,
\begin{align}\label{eqn: Intermediate bound on KW}
    ||W_0K^T_0RK_0W_0||_2 &\geq \sigma_{\rm min}(R)||K_0W_0||^2_2,
\end{align}
    and 
    \begin{equation}\label{eqn: Bound on WKR}
        \begin{aligned}
            ||W_0K^T_0RK_0W_0||_2 &\leq ||W_0||_2||W^{\frac{1}{2}}_0K^T_0RK_0W^{\frac{1}{2}}_0||_2 \\ &\leq ||W_0||_2 \rm{tr}(W^{\frac{1}{2}}_0K^T_0RK_0W^{\frac{1}{2}}_0) \\&= ||W_0||_2 \rm{tr}(K^T_0RK_0W_0) \\&\overset{\eqref{eqn: Bound on W0},\eqref{eqn: Bounds on part of sum}}{\leq} \frac{\rm{tr}^2(P_0)}{\sigma_{\rm min}(Q)}\,.
        \end{aligned}
    \end{equation}
    From \eqref{eqn: Intermediate bound on KW} and \eqref{eqn: Bound on WKR}, it follows that
    \begin{equation}\label{eqn: Bound on K0W0}
        ||K_0W_0||_2 \leq \frac{\rm{tr}(P_0)}{\sqrt{\sigma_{\rm min}(Q) \sigma_{\rm min}(R)}}.
    \end{equation} 
    Hence,
    \begin{multline}\label{eqn: Bound on norm of  add term}
            ||\Delta_B K_0 W_0 + W_0 K^T_0 \Delta^T_B||_2 \leq 2 ||\Delta_B||_2 ||K_0W_0||_2 \\\overset{\eqref{eqn: Bound on K0W0}}{\leq} \frac{2 \varepsilon \rm{tr}(P_0)}{\sqrt{\sigma_{\rm min}(Q) \sigma_{\rm min}(R)}}.
        \end{multline}
    By defining $\eta$ as in \eqref{eqn: Definition eta}, from \eqref{eqn: Bound on norm of  add term} we have
    \begin{equation}\label{eqn: Bound on add term 2}
        \Delta_B K_0 W_0 + W_0 K^T_0 \Delta^T_B \leq (1-\eta) I.
    \end{equation}
    Adding \eqref{eqn: Bound on add term 2} and \eqref{eqn: Gramian for nominal system}, and noting $B=B_0+\Delta_B$,  gives 
    \begin{equation*}
        (A+BK_0)W_0 + W_0(A+BK_0)^T + I \leq (1-\eta)I,
    \end{equation*}
    which leads to the inequality 
    \begin{equation}\label{eqn: Final inequality}
        (A+BK_0)(\frac{1}{\eta} W_0) + (\frac{1}{\eta} W_0)(A+BK_0)^T + I < 0.
    \end{equation}
    By \eqref{eqn: Suboptimality - bound on uncertainty}, we have $\eta>0$, and thus $K_0$ is a stabilizing suboptimal state feedback for the true system.
    Noting that 
    $W\leq \frac{1}{\eta} W_0$, from \eqref{eqn: J(K_0)} we have
    \begin{align*}
        J(K_0) &= \rm{tr}(Q W+K^T_0RK_0 W)
        \\&\leq \rm{tr}\left(Q(\frac{1}{\eta} W_0)+K^T_0RK_0(\frac{1}{\eta} W_0)\right) \overset{\eqref{eqn: Cost equivalence ARE and Gramian}}{=} \frac{1}{\eta} \rm{tr}(P_0).
    \end{align*}
\EP
\vspace{-3mm}
\subsection{Noisy data}\label{subsec: Noisy data}
In this subsection, we retain the assumption that the true input matrix $B$ is known, but consider the scenario where the rate of change of state measurements are affected by noise, i.e., \eqref{eqn: Subspace relations} modifies to $r(t_i)=Ax(t_i)+Bu(t_i)+d(t_i)$, where $d(t_i),~i \in \I$, accounts for the presence of noise. Recalling the definition of $\hat{r}(t_i)$ from \eqref{eqn: Def r_hat}, the system matrix satisfies the modified relation \eqref{eqn: Equations for estimation of A}, namely,
\begin{equation}\label{eqn: Equation of A noisy rate data}
    A = \sum_{i=1}^N \left(\hat{r}(t_i)-d(t_i)\right)y^T_i,
\end{equation}
for any set of vectors $\{y_i\}_{i \in \I}$,
satisfying \eqref{eqn: Right inverse of data matrix}. Since the noise sequence $\{d(t_i)\}_{i \in \I}$ is unknown, \eqref{eqn: Equation of A noisy rate data} can be rewritten as $A=A_0 + \Delta_A$ where $A_0:=\sum_{i=1}^N A_i$, $A_i$ is given by \eqref{eqn: Distributed equation of A}, and $\Delta_A:=\sum_{i=1}^N d(t_i)y^T_i$.  As explained in Section \ref{sec: Data distribution scheme and partial system identification}, under Assumption \ref{asm: Full row rank of X_0}, each agent $i$ can compute the share $A_i$ using the algorithm detailed in Appendix \ref{sec: Appendix A}. 
On the other hand, the matrix $\Delta_A$ is unknown to all agents.  
Let 
\begin{equation}
        \Delta_d:=\begin{bmatrix}
            d(t_1) & \cdots & d(t_N)
        \end{bmatrix}, 
        Y^T_0:=\begin{bmatrix}
            y(1) & \cdots & y(N)
        \end{bmatrix}.   
\end{equation}
Then, we have $\Delta_A=\Delta_dY_0$.
By momentarily neglecting $\Delta_A$, \`a la certainty equivalence approach, the algorithm in Theorem \ref{thm: Asymptotic convergence to solution of ARE} provides us with the stabilizing solution $\bar{P}_0$ to the nominal ARE
\begin{equation}\label{eqn: ARE nominal noise}
     A^T_0 P + P A_0 + Q - P B R^{-1} B^T P = 0,
\end{equation}
which is the original ARE \eqref{eqn: ARE} with the matrix $A$ replaced by $A_0$.     
Next, we provide suboptimal guarantees for the corresponding controller gain by assuming that the noise quantities are bounded.

\begin{proposition}\label{prop: Suboptimal noisy}
    Let Assumption \ref{asm: Full row rank of X_0} hold. Let $\bar{P}_0$ be the unique positive definite solution to the nominal ARE \eqref{eqn: ARE nominal noise} and $\bar{K}_0:=-R^{-1}B^T\bar{P}_0$ be
    the corresponding state feedback gain. Assume that the noise admits the energy bound $||\Delta_d||_2 \leq \tau$, for some $\tau \geq 0$.
    If 
    \begin{equation}\label{eqn: Suboptimality - bound on noise}
        \tau < \frac{\sigma_{\rm min}(Q) \sigma_{\rm min}(X_0)}{2\rm{tr}(\bar{P}_0)},
    \end{equation}
    then, $\bar{K}_0$ is a stabilizing controller for the true system, with the corresponding LQR cost $J(\bar{K}_0) \leq \zeta \rm{tr}(\bar{P}_0)$, where 
    \begin{equation}\label{eqn: Definition zeta}
        \zeta:= \frac{\sigma_{\rm min}(Q) \sigma_{\rm min}(X_0)}{\sigma_{\rm min}(Q)\sigma_{\rm min}(X_0) - 2 \tau \rm{tr}(\bar{P}_0)}. 
    \end{equation}
\end{proposition}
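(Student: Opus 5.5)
We follow the same Gramian-perturbation argument as in the proof of Proposition \ref{prop: Suboptimal uncertainty}, with the perturbation now entering through $A$ rather than $B$.

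\textbf{Step 1 (cost as an $H_2$ norm).} Assuming $A+B\bar K_0$ is Hurwitz, we express the cost as
\[
J(\bar K_0)=\tr\big((Q+\bar K_0^TR\bar K_0)W\big),
\]
where $W$ solves $(A+B\bar K_0)W+W(A+B\bar K_0)^T+I=0$.

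\textbf{Step 2 (nominal identities).} The nominal ARE \eqref{eqn: ARE nominal noise} can be rewritten as
\[
(A_0+B\bar K_0)^T\bar P_0+\bar P_0(A_0+B\bar K_0)+Q+\bar K_0^TR\bar K_0=0.
\]
Let $W_0>0$ solve $(A_0+B\bar K_0)W_0+W_0(A_0+B\bar K_0)^T+I=0$. The trace identity \eqref{eqn: H2 norm} then gives
\[
\tr(\bar P_0)=\tr\big((Q+\bar K_0^TR\bar K_0)W_0\big).
\]
Exactly as in \eqref{eqn: Bound on W0}, this yields
\[
\|W_0\|_2\le \frac{\tr(\bar P_0)}{\sigma_{\rm min}(Q)}.
\]

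\textbf{Step 3 (bound the perturbation).} Since $A+B\bar K_0=(A_0+B\bar K_0)+\Delta_A$, adding the perturbation to the nominal Gramian equation gives
\[
(A+B\bar K_0)W_0+W_0(A+B\bar K_0)^T+I=\Delta_AW_0+W_0\Delta_A^T.
\]
The task is to bound $\|\Delta_A\|_2=\|\Delta_dY_0\|_2\le\tau\|Y_0\|_2$, and this is the step we expect to be the main obstacle. The constraint \eqref{eqn: Right inverse of data matrix} only says $X_0Y_0=I$, so $Y_0$ is some right inverse of $X_0$. Only the minimum-norm right inverse $Y_0=X_0^T(X_0X_0^T)^{-1}$ has $\|Y_0\|_2=1/\sigma_{\rm min}(X_0)$. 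A general right inverse satisfies only $\|Y_0\|_2\ge 1/\sigma_{\rm min}(X_0)$, which points the wrong way.

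So we will either
\begin{itemize}
\item argue that the distributed algorithm of Appendix \ref{sec: Appendix A}, being a resource-allocation or least-norm scheme, returns the minimum-norm solution (for example, when initialized at zero its iterates stay in the row space of the data); or
\item state explicitly that $\{y_i\}$ is chosen as the minimum-norm right inverse.
\end{itemize}
In either case we obtain
\[
\|\Delta_A\|_2\le \frac{\tau}{\sigma_{\rm min}(X_0)}.
\]

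\textbf{Step 4 (rescale and conclude).} Combining Steps 2 and 3,
\[
\|\Delta_AW_0+W_0\Delta_A^T\|_2\le \frac{2\tau\,\tr(\bar P_0)}{\sigma_{\rm min}(Q)\sigma_{\rm min}(X_0)}=:1-\zeta^{-1}.
\]
Condition \eqref{eqn: Suboptimality - bound on noise} is exactly $1-\zeta^{-1}<1$, i.e. $\zeta^{-1}>0$, and one checks that $\zeta^{-1}$ matches \eqref{eqn: Definition zeta}. It follows that
\[
(A+B\bar K_0)W_0+W_0(A+B\bar K_0)^T+\zeta^{-1}I\le 0.
\]
Multiplying by $\zeta$ gives
\[
(A+B\bar K_0)(\zeta W_0)+(\zeta W_0)(A+B\bar K_0)^T+I\le 0.
\]

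For stability, we use that $W_0>0$. Then for any left eigenvector $v$ of $A+B\bar K_0$ with eigenvalue $\lambda$ we get
\[
2\,\mathrm{Re}(\lambda)\,v^*W_0v\le -\zeta^{-1}\|v\|^2<0,
\]
so $A+B\bar K_0$ is Hurwitz. This also justifies the assumption made in Step 1.

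For the cost, subtracting the Lyapunov equation for $W$ from the inequality above gives
\[
(A+B\bar K_0)(\zeta W_0-W)+(\zeta W_0-W)(A+B\bar K_0)^T\le 0.
\]
Since $A+B\bar K_0$ is Hurwitz, this implies $W\le\zeta W_0$. Using Step 1 and the trace identity from Step 2,
\[
J(\bar K_0)\le\zeta\,\tr\big((Q+\bar K_0^TR\bar K_0)W_0\big)=\zeta\,\tr(\bar P_0).
\]

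Note that this argument does not need the $R$- and $K$-dependent bound \eqref{eqn: Bound on K0W0}, because $\Delta_A$ multiplies $W_0$ directly.
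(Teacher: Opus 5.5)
Your proof is correct and follows the paper's argument step for step: the nominal Gramian bound $\|W_0\|_2\le\tr(\bar P_0)/\sigma_{\rm min}(Q)$, the bound $\|\Delta_A\|_2\le\tau/\sigma_{\rm min}(X_0)$, the rescaling by $\zeta$, and the comparison $W\le\zeta W_0$ carried over from Proposition~\ref{prop: Suboptimal uncertainty}. Your caveat is well taken, since the paper justifies $\|\Delta_dY_0\|_2\le\tau/\sigma_{\rm min}(X_0)$ only by $X_0Y_0=I$, which on its own yields only $\|Y_0\|_2\ge 1/\sigma_{\rm min}(X_0)$; your first remedy is the right one, but the reason is the equilibrium condition of the Appendix~A algorithm rather than invariance of its iterates (these need not stay in the row space, because $\lambda_{jk,i}$ varies with $i$ during transients): at any equilibrium $L\lambda_{jk}=0$ forces $\lambda_{jk}=c_{jk}\mathds{1}_N$, so $Y_0=k_w^{-1}X_0^TC$ with $C=[c_{jk}]$, and then $X_0Y_0=I$ gives $Y_0=X_0^T(X_0X_0^T)^{-1}$, the minimum-norm right inverse.
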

\BP
We can mimic the same arguments that led to \eqref{eqn: Cost equivalence ARE and Gramian} in the proof of Proposition \ref{prop: Suboptimal uncertainty} to arrive at the following equality 
\begin{equation}\label{eqn: Cost equivalence ARE and Gramian noisy}
    \mathrm{tr}(\bar{P}_0) = \mathrm{tr}(QW_0+\bar{K}^T_0R\bar{K}_0W_0),
\end{equation}
where $W_0>0$ satisfies 
\begin{equation}\label{eqn: Gramian for nominal system noisy}
    (A_0+B\bar{K}_0)\bar{W}_0 + \bar{W}_0(A_0+B\bar{K}_0)^T + I = 0. 
\end{equation}
 By \eqref{eqn: Cost equivalence ARE and Gramian noisy}, and following a  similar reasoning that led to \eqref{eqn: Bound on W0}, we obtain 
    \begin{equation}\label{eqn: Bound on W_hat}
        ||\bar{W}_0||_2 \leq \frac{\rm{tr}(\bar{P}_0)}{\sigma_{\rm min}(Q)}.
    \end{equation}
In addition, a bound on the error matrix $\Delta_A:=\Delta_dY_0$ is computed as
\begin{equation}\label{eqn: Bound on Del A using SNR}
    ||\Delta_A||_2 =||\Delta_dY_0||_2 \overset{\eqref{eqn: Right inverse of data matrix}}{\leq} \frac{\tau}{\sigma_{\rm min}(X_0)}.
\end{equation}
Combining \eqref{eqn: Bound on W_hat} and \eqref{eqn: Bound on Del A using SNR}, we have 
\begin{equation*}
    ||\Delta_A \bar{W}_0 + \bar{W}_0 \Delta^T_A||_2 \leq \frac{2 \rm{tr}(\bar{P}_0) \tau}{\sigma_{\rm min}(Q) \sigma_{\rm min}(X_0)}.
\end{equation*}
Therefore, we have 
 $  \Delta_A \bar{W}_0 + \bar{W}_0 \Delta^T_A \leq (1-\zeta^{-1})I.$
The rest of the proof is analogous to the treatment following \eqref{eqn: Bound on add term 2} in the proof of Proposition \ref{prop: Suboptimal uncertainty}.
\EP

\section{Case studies}\label{sec: Numerical simulation}
In the following subsections, we apply the proposed algorithms to two practical systems to validate the theoretical results and assess their performance under representative operating conditions.\footnote{Due to space constraints, the numerical values of large data and system matrices are provided at \url{https://github.com/Surya-Malladi/Sim_data_DDDC}.}

\subsection{Distributed data-driven computation of Lyapunov function: Quadruple tank}\label{subsec: Quad tank}
We illustrate the results of Section \ref{sec: Dist sol to LE}, by numerically computing a Lyapunov function for a stable system. We consider the 4-state linearized quadruple tank model from \cite{johansson2002quadruple} with zero input, and set $Q=I$. The data generated from the system is spread across $N=4$ agents, with one sample available to each agent. The shares computed by each agent, following the scheme in Section \ref{sec: Data distribution scheme and partial system identification}, are used to implement the algorithm \eqref{eqn: Distributed DLE with PI coupling}.
The evolution of the relative error is shown in Figure \ref{fig:Quad_tank},
for the first agent estimate $P_1$; the remaining agents exhibit nearly identical trajectories.
The figure indicates that the distributed dynamics in \eqref{eqn: Distributed DLE with PI coupling} 
is able to asymptomatically compute $P^*$.  
\begin{figure}[ht]
    \centering
    \includegraphics[width=0.8\linewidth]{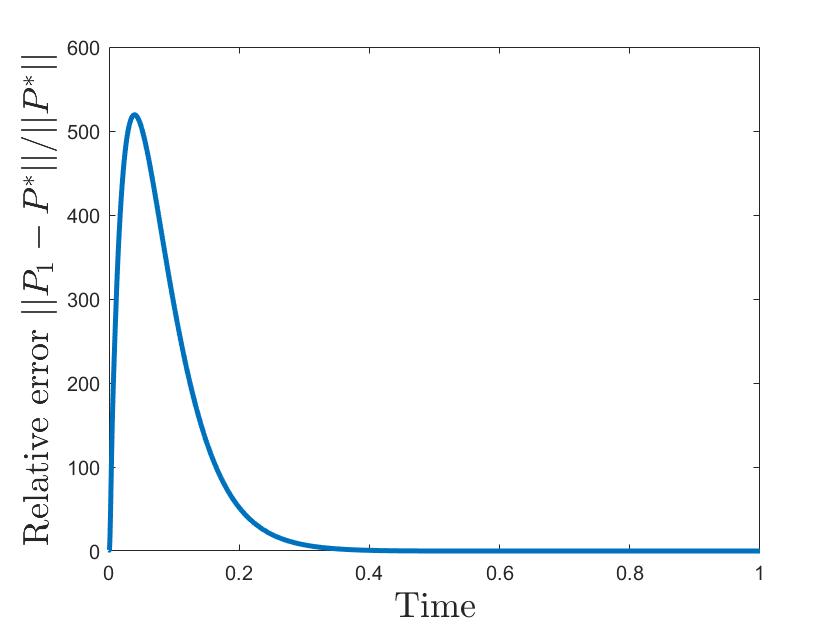}
    \caption{Quadruple tank: Asymptotic convergence to the solution of the Lyapunov equation for $\gamma=10^3$.}
    \label{fig:Quad_tank}
\end{figure}

\subsection{Distributed data-driven LQR: Helicopter hover dynamics}\label{subsec: Helicopter}
We illustrate the proposed algorithm in Section \ref{sec: Dist sol to ARE} by applying it to a linearized model of helicopter hover dynamics. The numerical values of $A$ and $B$ are obtained from \cite{padfield2008helicopter}. In addition, we set $Q=I$ and $R=I$.

First, noiseless data is generated from the system excited by randomly chosen input signals, and the data is fragmented across $N=16$ agents. The agents compute their shares of $A_i$ by running the distributed algorithm in Section \ref{sec: Appendix A}.
Figure \ref{fig:helicopter_prac_conv} shows practical convergence of the algorithm \eqref{eqn: Distributed DRE with coupling} for several choices of $\gamma$.
As can be seen, a higher $\gamma$ results in a more accurate solution to the Riccati equation.

Next, Figure \ref{fig:helicopter_asym_conv} shows the advantage of the algorithm \eqref{eqn: Distributed DRE with PI coupling}, which enables exact asymptotic computation of the true solution $P^*$ of the Riccati equation. Three arbitrarily selected datasets are shown, with the first agent’s estimate $P_1$ as representative behavior.   

\begin{figure}[ht]
    \centering
    \includegraphics[width=0.8\linewidth]{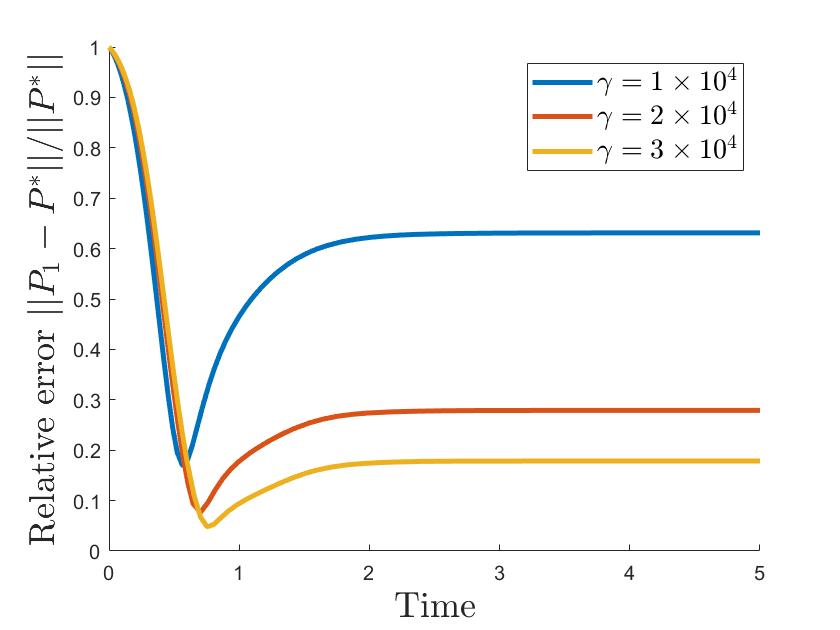}
    \caption{Helicopter dynamics: Practical convergence to the solution of the Riccati equation for different values of $\gamma$}
    \label{fig:helicopter_prac_conv}
\end{figure}
\begin{figure}[ht]
    \centering
    \includegraphics[width=0.8\linewidth]{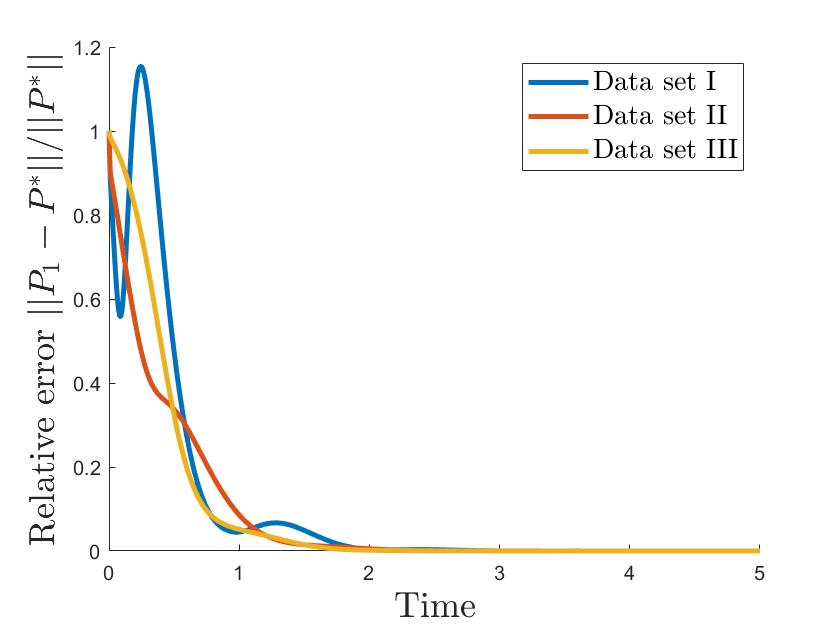}
    \caption{Helicopter dynamics: Asymptotic convergence to the solution of the Riccati equation for $\gamma=500$.}
    \label{fig:helicopter_asym_conv}
\end{figure}

We now assume only partial knowledge of the input matrix $B$. We set $\Delta_B=\kappa \Delta$, where the matrix $\Delta$, with $||\Delta||=1$, is randomly generated, and the scalar $\kappa$ specifies the strength of the uncertainty.
We apply the procedure in Subsection \ref{subsec: Uncertainty} for various levels of $\kappa$, and plot the resulting LQR cost in Figure \ref{fig: helicopter_robustness} (left), which shows how the performance of the suboptimal controller changes with the norm of the uncertainty.  

Finally, consistent with Subsection \ref{subsec: Noisy data}, we assume that the data concerning the rate of change of states are disturbed by various levels of noise, i.e. $||\Delta_d||\leq \tau$.  
We apply the procedure in Subsection \ref{subsec: Noisy data} for various levels of $\tau$, and plot the resulting LQR cost in Figure \ref{fig: helicopter_robustness} (right), which shows how the performance of the suboptimal controller varies with respect to different noise energy levels. 

\begin{figure}[ht]
    \centering
    \begin{subfigure}{0.49\linewidth}
        \centering
        \includegraphics[width=1.\textwidth]{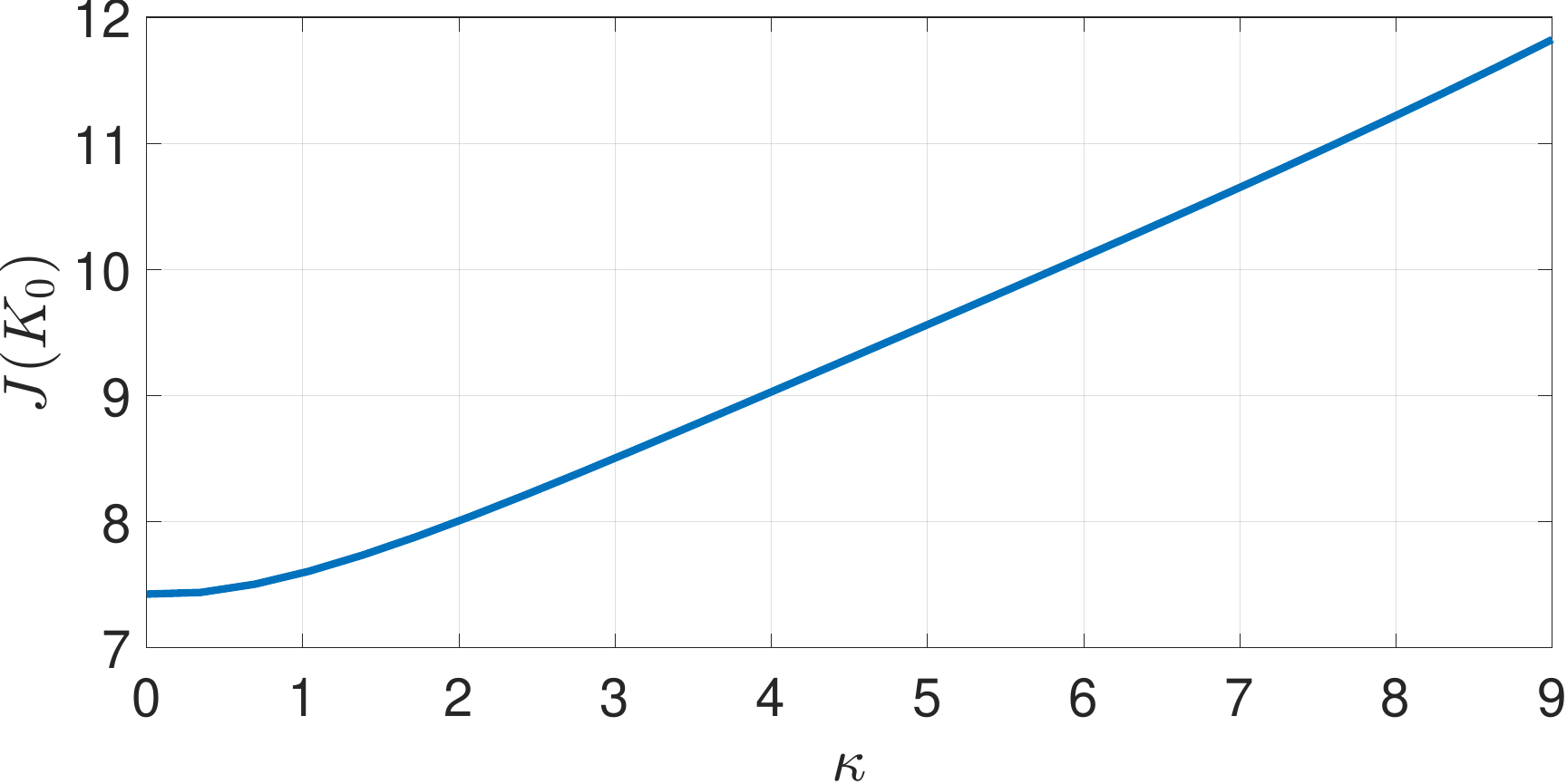}
    \end{subfigure}\hfill
    \begin{subfigure}{0.49\linewidth}
        \centering
        \includegraphics[width=\linewidth]{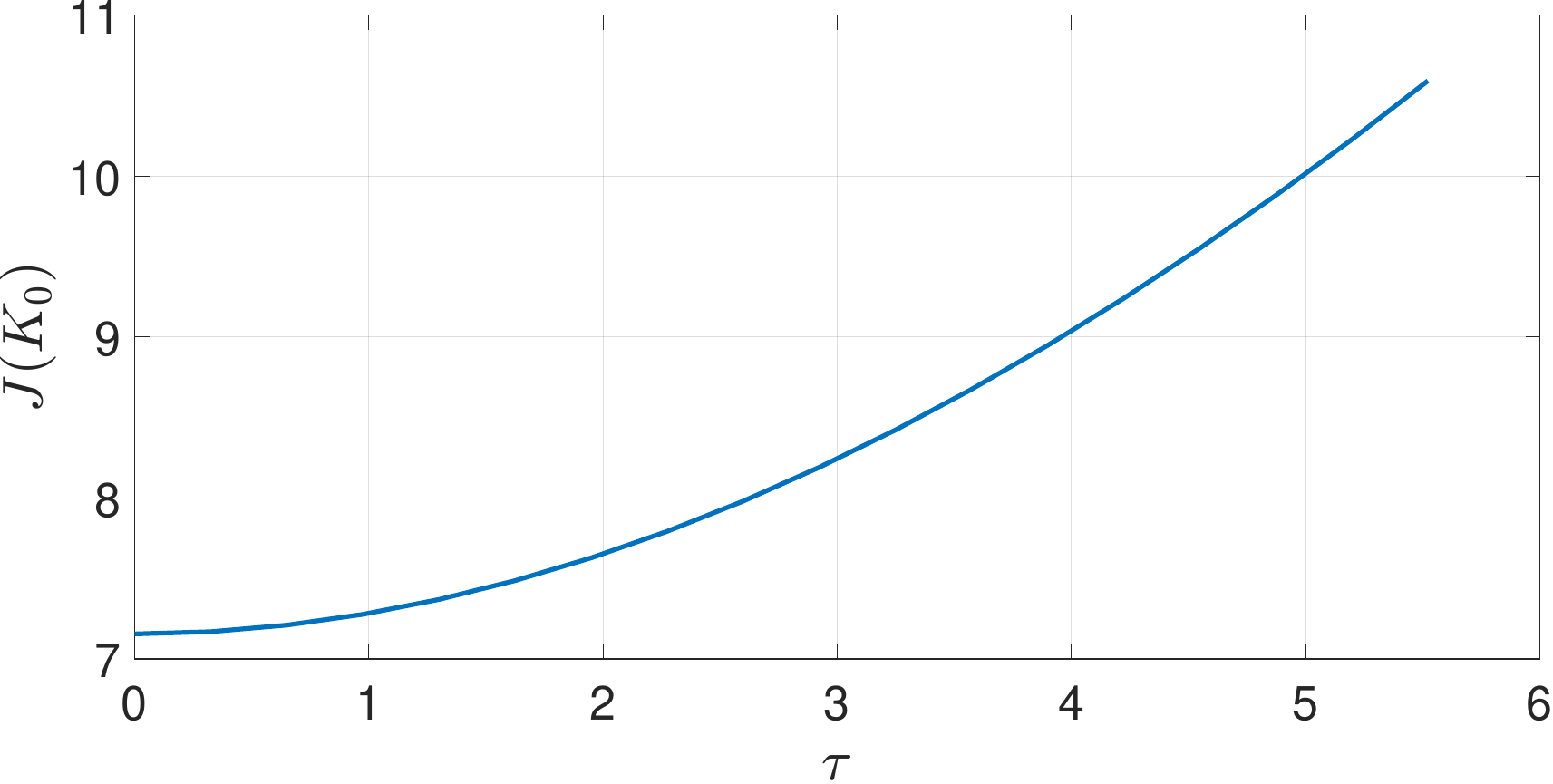}
    \end{subfigure} 
    \caption{Helicopter dynamics: Expected LQR cost vs. size of the uncertainty (left) and vs. noise energy level (right).}
    \label{fig: helicopter_robustness}
\end{figure}

\section{Conclusions}\label{sec:concl}
We investigated a data-driven control setting in which measurements are not centrally accessible but are instead held across multiple agents, each possessing only limited local samples and communicating only with neighbors. We developed distributed dynamical algorithms that enable agents to collectively compute global system quantities from such locally held data. In particular, we obtained distributed solutions to the Lyapunov equation, yielding a quadratic stability certificate for stable linear time-invariant systems, and to the algebraic Riccati equation, yielding the linear–quadratic regulator (LQR) controller. The proposed schemes achieved practical convergence, while PI-augmented variants ensured exact convergence. The resulting controller was shown to be robust to uncertainty and measurement noise. Future work will consider extensions of the framework to broader system classes and information structures.

\bibliographystyle{unsrt}
\bibliography{References}           

@book{jiang2021trends,
  title={Trends in Nonlinear and Adaptive Control: A Tribute to Laurent Praly for His 65th Birthday},
  author={Jiang, Zhong-Ping and Prieur, Christophe and Astolfi, Alessandro},
  volume={488},
  year={2021},
  publisher={Springer Nature}
}

@article{callier1981criterion,
  title={Criterion for the convergence of the solution of the {Riccati} differential equation},
  author={Callier, F and Willems, J},
  journal={IEEE Transactions on Automatic Control},
  volume={26},
  number={6},
  pages={1232--1242},
  year={1981},
  publisher={IEEE}
}

@article{callier1994convergence,
  title={Convergence of the time-invariant {Riccati} differential equation and {LQ-problem}: mechanisms of attraction},
  author={Callier, Frank M and Winkin, Joseph and Willems, Jacques L},
  journal={International journal of control},
  volume={59},
  number={4},
  pages={983--1000},
  year={1994},
  publisher={Taylor \& Francis}
}

@article{dorfler2023certainty,
  title={On the certainty-equivalence approach to direct data-driven {LQR} design},
  author={D{\"o}rfler, Florian and Tesi, Pietro and De Persis, Claudio},
  journal={IEEE Transactions on Automatic Control},
  volume={68},
  number={12},
  pages={7989--7996},
  year={2023},
  publisher={IEEE}
}

@article{kim2023decentralized,
  title={Decentralized design and plug-and-play distributed control for linear multichannel systems},
  author={Kim, Taekyoo and Lee, Donggil and Shim, Hyungbo},
  journal={IEEE Transactions on Automatic Control},
  volume={69},
  number={5},
  pages={2807--2822},
  year={2023},
  publisher={IEEE}
}

@book{abou2012matrix,
  title={Matrix Riccati equations in control and systems theory},
  author={Abou-Kandil, Hisham and Freiling, Gerhard and Ionescu, Vlad and Jank, Gerhard},
  year={2012},
  publisher={Birkh{\"a}user}
}

@inproceedings{coulson2019data,
  title={Data-enabled predictive control: In the shallows of the {DeePC}},
  author={Coulson, Jeremy and Lygeros, John and D{\"o}rfler, Florian},
  booktitle={2019 18th European control conference (ECC)},
  pages={307--312},
  year={2019},
  organization={IEEE}
}

@article{de2019formulas,
  title={Formulas for data-driven control: Stabilization, optimality, and robustness},
  author={De Persis, Claudio and Tesi, Pietro},
  journal={IEEE Transactions on Automatic Control},
  volume={65},
  number={3},
  pages={909--924},
  year={2019},
  publisher={IEEE}
}

@article{van2020data,
  title={Data informativity: A new perspective on data-driven analysis and control},
  author={Van Waarde, Henk J and Eising, Jaap and Trentelman, Harry L and Camlibel, M Kanat},
  journal={IEEE Transactions on Automatic Control},
  volume={65},
  number={11},
  pages={4753--4768},
  year={2020},
  publisher={IEEE}
}

@article{berberich2020data,
  title={Data-driven model predictive control with stability and robustness guarantees},
  author={Berberich, Julian and K{\"o}hler, Johannes and M{\"u}ller, Matthias A and Allg{\"o}wer, Frank},
  journal={IEEE transactions on automatic control},
  volume={66},
  number={4},
  pages={1702--1717},
  year={2020},
  publisher={IEEE}
}

@article{markovsky2021behavioral,
  title={Behavioral systems theory in data-driven analysis, signal processing, and control},
  author={Markovsky, Ivan and D{\"o}rfler, Florian},
  journal={Annual Reviews in Control},
  volume={52},
  pages={42--64},
  year={2021},
  publisher={Elsevier}
}

@article{campi2002virtual,
  title={Virtual reference feedback tuning: a direct method for the design of feedback controllers},
  author={Campi, Marco C and Lecchini, Andrea and Savaresi, Sergio M},
  journal={Automatica},
  volume={38},
  number={8},
  pages={1337--1346},
  year={2002},
  publisher={Elsevier}
}

@inproceedings{dai2023data,
  title={Data-driven control of nonlinear systems from input-output data},
  author={Dai, Xiaoyan and De Persis, Claudio and Monshizadeh, Nima and Tesi, Pietro},
  booktitle={2023 62nd IEEE Conference on Decision and Control (CDC)},
  pages={1613--1618},
  year={2023},
  organization={IEEE}
}

@book{khalil1996robust,
  title={Robust and optimal control},
  author={Khalil, IS and Doyle, JC and Glover, K},
  volume={2},
  year={1996},
  publisher={Prentice hall New York}
}

@article{baghcheband2024machine,
  title={Machine {Learning} {Data} {Market} {Based} on {Multiagent} {Systems}},
  author={Baghcheband, Hajar and Soares, Carlos and Reis, Luis Paulo},
  journal={IEEE Internet Computing},
  volume={28},
  number={4},
  pages={7--13},
  year={2024},
  publisher={IEEE}
}

@inproceedings{eising2022informativity,
  title={Informativity for centralized design of distributed controllers for networked systems},
  author={Eising, Jaap and Cort{\'e}s, Jorge},
  booktitle={2022 European Control Conference (ECC)},
  pages={681--686},
  year={2022},
  organization={IEEE}
}

@book{padfield2008helicopter,
  title={Helicopter flight dynamics: the theory and application of flying qualities and simulation modelling},
  author={Padfield, Gareth D},
  year={2008},
  publisher={John Wiley \& Sons}
}

@article{teixeira2015secure,
  title={A secure control framework for resource-limited adversaries},
  author={Teixeira, Andr{\'e} and Shames, Iman and Sandberg, Henrik and Johansson, Karl Henrik},
  journal={Automatica},
  volume={51},
  pages={135--148},
  year={2015},
  publisher={Elsevier}
}

@article{pasqualetti2013attack,
  title={Attack detection and identification in cyber-physical systems},
  author={Pasqualetti, Fabio and D{\"o}rfler, Florian and Bullo, Francesco},
  journal={IEEE transactions on automatic control},
  volume={58},
  number={11},
  pages={2715--2729},
  year={2013},
  publisher={IEEE}
}

@article{alonso2022data,
  title={Data-driven distributed and localized model predictive control},
  author={Alonso, Carmen Amo and Yang, Fengjun and Matni, Nikolai},
  journal={IEEE Open Journal of Control Systems},
  volume={1},
  pages={29--40},
  year={2022},
  publisher={IEEE}
}

@article{celi2023distributed,
  title={Distributed data-driven control of network systems},
  author={Celi, Federico and Baggio, Giacomo and Pasqualetti, Fabio},
  journal={IEEE Open Journal of Control Systems},
  volume={2},
  pages={93--107},
  year={2023},
  publisher={IEEE}
}

@article{lee2020tool,
  title={A tool for analysis and synthesis of heterogeneous multi-agent systems under rank-deficient coupling},
  author={Lee, Jin Gyu and Shim, Hyungbo},
  journal={Automatica},
  volume={117},
  pages={108952},
  year={2020},
  publisher={Elsevier}
}

@article{bianchin2023online,
  title={Online stochastic optimization for unknown linear systems: Data-driven controller synthesis and analysis},
  author={Bianchin, Gianluca and Vaquero, Miguel and Cortes, Jorge and Dall'Anese, Emiliano},
  journal={IEEE Transactions on Automatic Control},
  volume={69},
  number={7},
  pages={4411--4426},
  year={2023},
  publisher={IEEE}
}

@inproceedings{zheng2022fl,
  title={{FL-market}: Trading private models in federated learning},
  author={Zheng, Shuyuan and Cao, Yang and Yoshikawa, Masatoshi and Li, Huizhong and Yan, Qiang},
  booktitle={2022 IEEE International Conference on Big Data (Big Data)},
  pages={1525--1534},
  year={2022},
  organization={IEEE}
}

@article{gao2023data,
  title={Data-driven injection attack strategy for linear cyber-physical systems: An input-output data-based approach},
  author={Gao, Sheng and Zhang, Hao and Wang, Zhuping and Huang, Chao and Yan, Huaicheng},
  journal={IEEE Transactions on Network Science and Engineering},
  volume={10},
  number={6},
  pages={4082--4095},
  year={2023},
  publisher={IEEE}
}

@article{li2019optimal,
  title={Optimal stealthy innovation-based attacks with historical data in cyber-physical systems},
  author={Li, Yi-Gang and Yang, Guang-Hong},
  journal={IEEE Transactions on Systems, Man, and Cybernetics: Systems},
  volume={51},
  number={6},
  pages={3401--3411},
  year={2019},
  publisher={IEEE}
}

@article{nortmann2023direct,
  title={Direct data-driven control of linear time-varying systems},
  author={Nortmann, Benita and Mylvaganam, Thulasi},
  journal={IEEE Transactions on Automatic Control},
  volume={68},
  number={8},
  pages={4888--4895},
  year={2023},
  publisher={IEEE}
}

@article{cappello2021distributed,
  title={Distributed differential games for control of multi-agent systems},
  author={Cappello, Domenico and Mylvaganam, Thulasi},
  journal={IEEE Transactions on Control of Network Systems},
  volume={9},
  number={2},
  pages={635--646},
  year={2021},
  publisher={IEEE}
}

@inproceedings{zhao2023data,
  title={Data-enabled policy optimization for the linear quadratic regulator},
  author={Zhao, Feiran and D{\"o}rfler, Florian and You, Keyou},
  booktitle={2023 62nd IEEE Conference on Decision and Control (CDC)},
  pages={6160--6165},
  year={2023},
  organization={IEEE}
}

@article{hu2023toward,
  title={Toward a theoretical foundation of policy optimization for learning control policies},
  author={Hu, Bin and Zhang, Kaiqing and Li, Na and Mesbahi, Mehran and Fazel, Maryam and Ba{\c{s}}ar, Tamer},
  journal={Annual Review of Control, Robotics, and Autonomous Systems},
  volume={6},
  number={1},
  pages={123--158},
  year={2023},
  publisher={Annual Reviews}
}

@article{johansson2002quadruple,
  title={The quadruple-tank process: A multivariable laboratory process with an adjustable zero},
  author={Johansson, Karl Henrik},
  journal={IEEE Transactions on control systems technology},
  volume={8},
  number={3},
  pages={456--465},
  year={2002},
  publisher={IEEE}
}

\section{Appendix A: Distributed share computation}\label{sec: Appendix A}
Consider the constraint 
\begin{equation}\label{eqn: Distributed right inverse problem statement}
    \sum_{i=1}^N v(i) w^T(i) = M,
\end{equation}
where each agent $i$ has access to the vector $v(i) \in \R^{n_1}$. The matrix $M=[m_{jk}]\in\R^{n_1 \times n_2}$ is known to all agents. The objective is that each agent $i$ compute $w(i) \in \R^{n_2}$, such that the resulting computed vectors  satisfy the above constraint.  
This is equivalent to the $n_1n_2$ scalar constraints
\begin{equation}\label{eqn: Distributed right inverse as set of scalar constraints}
    \sum_{i=1}^{N}v_j(i)w_k(i)= m_{jk}, \text{ for } j=1,\ldots,n_1,k=1,\ldots,n_2,
\end{equation}
where $v_j(i)$ and $w_k(i)$ are the $j$th and $k$th elements of the vectors $v(i)$ and $w(i)$, respectively. 

Assume that $L$ is the Laplacian matrix of a connected, undirected communication graph. Then, for each pair $(j,k)$, $j=1,\ldots,n_1$ and $k=1,\ldots,n_2$, the constraints in \eqref{eqn: Distributed right inverse as set of scalar constraints} can be rewritten as 
$
    (L \mu_{jk})_i = v_j(i)w_k(i)-\frac{1}{N}m_{jk}, i=1,\ldots,N, 
$
where $\mu_{jk} \in \R^N$. By viewing the latter constraints as a distributed feasibility problem, we define the Lagrangian 
\begin{align}\label{eqn: Lagrangian for the distributed computation of right inverse}
        \nonumber\mathcal{L} =& \sum_{j=1}^{n_1} \sum_{k=1}^{n_2} \sum_{i=1}^{N} \lambda_{jk,i}\left((L\mu_{jk})_i - v_j(i)w_k(i) + \frac{1}{N}m_{jk} \right) \\ &+ \frac{k_w}{2}\sum_{i=1}^N||w(i)||^2_2,
\end{align}
where the last term of $\mathcal{L}$, with $k_w>0$, is added to ensure asymptotic stability of the subsequent  primal-dual algorithm.
Let $\mu_{jk,i}$ and $\lambda_{jk,i}$ 
denote the $i${th} element of $\mu_{jk}$ and $\lambda_{jk}$, respectively. Then, the distributed primal-dual algorithm 
\begin{align}\label{eqn: Distributed algorithm for right inverse}
    \begin{split}
        \dot{w}_k(i)&=-k_w w_k(i)+\sum_{j=1}^{n_1} v_j(i)\lambda_{jk,i},\\
        \dot{\mu}_{jk,i}&=-(L\lambda_{jk})_i,\\
        \dot{\lambda}_{jk,i}&=(L\mu_{jk})_i-v_j(i)w_k(i)+\frac{1}{N}m_{jk}
    \end{split}
\end{align} 
computes the vector $w(i)$ satisfying \eqref{eqn: Distributed right inverse problem statement} as desired. 

\section{Appendix B: Proofs of technical lemmas}\label{sec: Appendix B}
\subsection{Proof of Lemma \ref{lem: DLE Vectorization and simplification}}\label{apx: Dist alg for LE}   
Let $q:=\mathrm{vec}(Q)$, and define 
\begin{align}\label{eqn: Vec definitions pyqA}
    \begin{split}
        p_i&:=\mathrm{vec}(P_i), ~~~~~~ y_i:=\mathrm{vec}(Y_i),\\
        \bar{A}_i&:=(I_N\otimes A^T_i) + (A^T_i\otimes I_N),
    \end{split}
\end{align}
for each $i \in \I$. Then the dynamics \eqref{eqn: Distributed DLE with PI coupling} take the following vectorized form:
\begin{align*}
    \dot{p}_i&=N\bar{A}_ip_i + q + \gamma\sum_{j\in \mathcal{N}_i} (p_j-p_i) + \gamma\sum_{j\in \mathcal{N}_i} (y_j-y_i),\\
    \dot{y}_i&= -\gamma\sum_{j\in \mathcal{N}_i} (p_j-p_i).
\end{align*}
    Let $L$ denote the Laplacian matrix of the communication graph.  
    Aided by the notations 
    \begin{equation*}
        \begin{aligned}
            \mathbf{p}&:=\mathrm{col}(p_1,\ldots,p_N), &\mathbf{y}&:=\mathrm{col}(y_1,\ldots,y_N),\\
            \mathds{1}_N&:=\mathrm{col}(\underbrace{1,\ldots,1}_{N \text{ times}}),&\mathcal{A}&:=N\,\mathrm{blkdiag}(\bar{A}_1,\ldots,\bar{A}_N),\\
            \mathbf{q}&:=\mathds{1}_N\otimes q,& \mathcal{L}&:=L\otimes I_{n^2}, 
        \end{aligned}
    \end{equation*}
    the dynamics are compactly written as
    \begin{align*}
        \begin{split}
            \dot{\mathbf{p}}&=\mathcal{A}\mathbf{p}+\mathbf{q}-\gamma\mathcal{L}\mathbf{p}-\gamma\mathcal{L}\mathbf{y}, \\
            \dot{\mathbf{y}}&=\gamma\mathcal{L}\mathbf{p}.
        \end{split}
    \end{align*}
The Laplacian matrix $L$ can be decomposed as 
    $L=U\Gamma U^T$ where $\Gamma>0$ is a diagonal matrix containing the positive eigenvalues of $L$ and $U\in \R^{N \times (N-1)}$ satisfies $U^TU=I_{N-1}$ and $\mathds{1}^T U=0$. Define
\begin{equation}\label{eqn: Definitions Kronecker}
    \omega:=\mathds{1}_N \otimes I_{n^2}, \quad
        \mathcal{U}:=U \otimes I_{n^2}, \quad
        \Lambda:=\Gamma \otimes I_{n^2},
\end{equation}
and the coordinate transformation 
\begin{equation}\label{eqn: Coord trans split}
    \begin{bmatrix}
        \bar{\mathbf{p}}\\\tilde{\mathbf{p}}
    \end{bmatrix}:=\begin{bmatrix}
        \frac{1}{N}\omega^T\\\mathcal{U}^T
    \end{bmatrix}\mathbf{p}, \quad \begin{bmatrix}
        \bar{\mathbf{y}}\\\tilde{\mathbf{y}}
    \end{bmatrix}:=\begin{bmatrix}
        \frac{1}{N}\omega^T\\\mathcal{U}^T
    \end{bmatrix}\mathbf{y}.
\end{equation}
The inverse transformation of \eqref{eqn: Coord trans split} is 
\begin{equation}\label{eqn: Inverse coord trans split}
    \mathbf{p}=\omega\bar{\mathbf{p}}+\mathcal{U}\tilde{\mathbf{p}}, \quad \mathbf{y}=\omega\bar{\mathbf{y}}+\mathcal{U}\tilde{\mathbf{y}}.
\end{equation}
Notice that the state $\bar{\mathbf{p}}$ represents the average of the vectors $\{p_i\}_{i\in\I}$. Moreover, the state $\tilde{\mathbf{p}}$ is related to the errors of the vectors $p_i$ with respect to their average, and $\tilde{\mathbf{p}}=0$ if and only if $p_i=\bar{\mathbf{p}},~ \forall i\in\I$.\\
These relations follow from the properties of the Laplacian matrix and the Kronecker product:
\begin{equation}\label{eqn: Laplacian Properties Kronecker}
    \begin{aligned}
        \mathcal{L}\omega &= 0,
         & \omega^T\mathcal{U} &=0,\\
        \mathcal{U}^T\mathcal{U}&=I_{(N-1)n^2},
         & \mathcal{U}^T\mathcal{L} &=\Lambda \mathcal{U}^T,
    \end{aligned}
\end{equation}
\vspace{-2mm}
\begin{equation}\label{eqn: Property Kronecker}
    \omega^T \mathcal{A}\omega = N\sum_{i=1}^N \bar{A}_i =: N \bar{A}.
\end{equation}
The dynamics of $\mathbf{y}$ in the new coordinates reduce to 
\begin{equation}\label{eqn: Vectorized split dynamics for y}
    \dot{\bar{\mathbf{y}}}\overset{\eqref{eqn: Laplacian Properties Kronecker}}{=}0, \quad \dot{\tilde{\mathbf{y}}}\overset{\eqref{eqn: Laplacian Properties Kronecker}}{=}\gamma \Lambda \tilde{\mathbf{p}},
\end{equation}
and the dynamics of $\mathbf{p}$ in the new coordinates are written as
\begin{align*}
    \dot{\bar{\mathbf{p}}}&=\frac{1}{N}\omega^T\mathcal{A}\mathbf{p}+\frac{1}{N}\omega^T\mathbf{q} -\frac{1}{N}\gamma \omega^T \mathcal{L}(\mathbf{p}+\mathbf{y})\\
    &\overset{\eqref{eqn: Inverse coord trans split},\eqref{eqn: Laplacian Properties Kronecker}}{=}\frac{1}{N}\omega^T\mathcal{A}\omega \bar{\mathbf{p}}+ \frac{1}{N}\omega^T\mathcal{A}\mathcal{U}\tilde{\mathbf{p}} + \frac{1}{N}\omega^T (\mathds{1}_N \otimes q)\\
    &\overset{\eqref{eqn: Property Kronecker}}{=}\bar{A}\bar{\mathbf{p}} + \frac{1}{N}\omega^T\mathcal{A}\mathcal{U}\tilde{\mathbf{p}} + q.
\end{align*}
Similarly,
\begin{align*}
    \dot{\tilde{\mathbf{p}}}&=\underbrace{\mathcal{U}^T\mathcal{A}\mathbf{p}+\mathcal{U}^T\mathbf{q}}_{[\mathbf{I}]}-\underbrace{(\gamma\mathcal{U}^T\mathcal{L}\mathbf{p}+\gamma\mathcal{U}^T\mathcal{L}\mathbf{y})}_{[\mathbf{II}]}.
\end{align*}
By the use of \eqref{eqn: Inverse coord trans split} and \eqref{eqn: Definitions Kronecker}, the first expression can be written as
\begin{align*}
    [\mathbf{I}]&=\mathcal{U}^T\mathcal{A}(\omega \bar{\mathbf{p}}+\mathcal{U}\tilde{\mathbf{p}})+(U^T\otimes I_{n^2})(\mathds{1}_N\otimes q)\\
    &\overset{\eqref{eqn: Laplacian Properties Kronecker}}{=}\mathcal{U}^T\mathcal{A}\omega \bar{\mathbf{p}} + \mathcal{U}^T\mathcal{A}\mathcal{U}\tilde{\mathbf{p}},
\end{align*}
and the second one as
\begin{align*}
    [\mathbf{II}]&\overset{\eqref{eqn: Laplacian Properties Kronecker},\eqref{eqn: Inverse coord trans split}}{=}\gamma \Lambda \mathcal{U}^T (\omega \bar{\mathbf{p}}+\mathcal{U}\tilde{\mathbf{p}} + \omega \bar{\mathbf{y}}+\mathcal{U}\tilde{\mathbf{y}})\\
    \overset{\eqref{eqn: Laplacian Properties Kronecker}}{=}& \gamma \Lambda I_{(N-1)n^2} (\tilde{\mathbf{p}}+\tilde{\mathbf{y}}).\\
    \dot{\tilde{\mathbf{p}}}=&\left( \mathcal{U}^T\mathcal{A}\mathcal{U} -\gamma \Lambda \right)\tilde{\mathbf{p}} + \mathcal{U}^T\mathcal{A}\omega \bar{\mathbf{p}} -\gamma \Lambda \tilde{\mathbf{y}}.
\end{align*}
Therefore,
\begin{subequations}\label{eqn: Vectorized split dynamics for p}
    \begin{align}
        \dot{\bar{\mathbf{p}}}&=\bar{A}\bar{\mathbf{p}} + \frac{1}{N}\omega^T\mathcal{A}\mathcal{U}\tilde{\mathbf{p}} + q, \\
        \dot{\tilde{\mathbf{p}}}&=( \mathcal{U}^T\mathcal{A}\mathcal{U} -\gamma \Lambda )\tilde{\mathbf{p}} + \mathcal{U}^T\mathcal{A}\omega \bar{\mathbf{p}} -\gamma \Lambda \tilde{\mathbf{y}}.
    \end{align}
\end{subequations}
The equilibrium for the combined dynamics \eqref{eqn: Vectorized split dynamics for y} and \eqref{eqn: Vectorized split dynamics for p} is given by
\begin{equation}\label{eqn: Equilibrium of split dynamics}
    \tilde{\mathbf{p}}^*=0, \quad \bar{A}\bar{\mathbf{p}}^* + q =0, \quad \tilde{\mathbf{y}}^*=\frac{1}{\gamma}(\Lambda)^{-1}\mathcal{U}^T\mathcal{A}\omega\bar{\mathbf{p}}^*.
\end{equation}

We shift the equilibrium to the origin by defining
\begin{equation}\label{eqn: xi def}
(\xi_1, \xi_2, \xi_3, \xi_4):=(\bar{\mathbf{p}}-\bar{\mathbf{p}}^*, \tilde{\mathbf{p}}, \tilde{\mathbf{y}}-\tilde{\mathbf{y}}^*, \bar{\mathbf{y}})
\end{equation}
which transforms \eqref{eqn: Vectorized split dynamics for p} and \eqref{eqn: Vectorized split dynamics for y} into 
\begin{align*}
    \dot{\xi}_1&=\bar{A}\bar{\mathbf{p}}^*+\bar{A}\xi_1 + q + \frac{1}{N}\omega^T\mathcal{A}\mathcal{U}\xi_2
    \\&\overset{\eqref{eqn: Equilibrium of split dynamics}}{=}\bar{A} \xi_1 + \frac{1}{N}\omega^T\mathcal{A}\mathcal{U}\xi_2,\\
    \dot{\xi}_2&=( \mathcal{U}^T\mathcal{A}\mathcal{U} -\gamma \Lambda )\xi_2 + \mathcal{U}^T\mathcal{A}\omega (\bar{\mathbf{p}}^*+\xi_1) -\gamma \Lambda (\tilde{\mathbf{y}}^*+\xi_3)\\
    &\overset{\eqref{eqn: Equilibrium of split dynamics}}{=} (\mathcal{U}^T\mathcal{A}\mathcal{U} -\gamma \Lambda )\xi_2 +\mathcal{U}^T\mathcal{A}\omega \xi_1 -\gamma\Lambda \xi_3,\\
    \dot{\xi}_3&=\gamma \Lambda \xi_2.
\end{align*}
Note that all the coordinate transformations performed are reversible, and hence, there is an equivalence between the original coordinates $(P_i,Y_i)_{i \in \I}$ and $(\xi_1,\xi_2,\xi_3,\xi_4)$.\\ 
In a compact form, the dynamics are written as \eqref{eqn: Compact dynamics DLE}
where
\begin{equation*}
    \begin{aligned}
          A_{12}&:=\frac{1}{N}\omega^T\mathcal{A}\mathcal{U}, & A_{21} &:= \mathcal{U}^T\mathcal{A}\omega, \\A_{22}&:=\mathcal{U}^T\mathcal{A}\mathcal{U},& A_{23}&:=\Lambda>0. \rlap{ \quad \qquad\;\;\,$\blacksquare$}
    \end{aligned}
\end{equation*}

\subsection{Proof of Lemma \ref{lem: Simplified dynamics DRE}}\label{apx: Dist alg for ARE}
Let $D=BR^{-1}B^T$ as before. 
Recall the definition of the map $\phi^r$ from Section \ref{sec: Dist sol to ARE}, namely, $\phi^r(v)=I_n \otimes \mathrm{mat}(v)D$.  In addition, we define a map $\phi^{\ell}:\R^{n^2}\to \R^{n^2 \times n^2}$ as $\phi^{\ell}(v)=(D \,\mathrm{mat}(v))^T\otimes I_n$. The maps $\phi^k,$ $k\in \{\ell,r\}$, satisfy the following properties.
\begin{equation}\label{eqn: Linearity of phi}
    \phi^k(a_1v_1+a_2v_2)=a_1\phi^k(v_1)+a_2\phi^k(v_2), \forall v_1, v_2 \in \R^{n^2},
\end{equation}
\begin{equation}\label{eqn: Interchangeability of phi}
        \mathrm{vec}(V_1DV_2)=\phi^r(v_1)v_2=\phi^{\ell}(v_2)v_1, \forall V_1,V_2 \in \R^{n\times n}.
    \end{equation}
    We reuse the definitions of $q, p_i, y_i, \bar{A}_i, \mathbf{p}, \mathbf{y}, \mathcal{A}, \mathbf{q},\mathcal{L}$, $\mathcal{U}, \Lambda$, and $\omega$ from Appendix \ref{apx: Dist alg for LE}.
    The vectorized dynamics \eqref{eqn: Distributed DRE with PI coupling} are given as follows. For each $i \in \I$, we have
    \begin{subequations}\label{eqn: Vectorized Dist PI DRE}
        \begin{align}
            \nonumber\dot{p}_i=&(\bar{A}_i - \phi^r(p_i))p_i + q + \gamma\sum_{j\in \mathcal{N}_i} (p_j-p_i) \\&+ \gamma\sum_{j\in \mathcal{N}_i} (y_j-y_i),\\
            \dot{y}_i=& -\gamma\sum_{j\in \mathcal{N}_i} (p_j-p_i),
        \end{align}
    \end{subequations}
To write the above dynamics compactly, let $\mathbf{v}:=\mathrm{col}(v_1,\ldots,v_N)$, and $\Phi^k(\mathbf{v}):=\mathrm{diag}(\phi^k(v_1),\ldots,\phi^k(v_N))$, for $k\in \{\ell,r\}$. 
Then, \eqref{eqn: Vectorized Dist PI DRE} can be written compactly as
\begin{align}\label{eqn: Vectorized PI dynamics}
    \begin{split}
        \dot{\mathbf{p}}&=\mathcal{A}\mathbf{p}-\Phi^r(\mathbf{p})\mathbf{p}+\mathbf{q}-\gamma\mathcal{L}\mathbf{p}-\gamma\mathcal{L}\mathbf{y}, \\
        \dot{\mathbf{y}}&=\gamma\mathcal{L}\mathbf{p}.
    \end{split}
\end{align}
The maps $\Phi(k)$, $k\in \{\ell, r\}$ satisfy the following properties:
\begin{align}                   
    \Phi^k(a_1\mathbf{v_1}+a_2\mathbf{v_2})&=a_1\Phi^k(\mathbf{v_1})+a_2\Phi^k(\mathbf{v_2}),~k \in \{\ell,r\},\label{eqn: Linearity of Phi}\\
    \Phi^{\ell}(\mathbf{v_1})\mathbf{v_2}&=\Phi^r(\mathbf{v_2})\mathbf{v_1},\label{eqn: Interchangeability of Phi}\\
    \Phi^k(\omega v)&= I_N \otimes \phi^k(v).\label{eqn: Properties Kronecker 2}
\end{align}
for any $v, v_1, v_2 \in \R^{n^2}$
The first two properties follows from the properties \eqref{eqn: Linearity of phi} and \eqref{eqn: Interchangeability of phi}, while the third one results from 
the definitions of $\omega$ and the properties of Kronecker product.

By applying the same coordinate transformation as in \eqref{eqn: Coord trans split},  we obtain 
\begin{equation}\label{eqn: Vectorized split dynamics for y DRE}
    \dot{\bar{\mathbf{y}}}\overset{\eqref{eqn: Laplacian Properties Kronecker}}{=}0, \quad \dot{\tilde{\mathbf{y}}}\overset{\eqref{eqn: Laplacian Properties Kronecker}}{=}\gamma \Lambda \tilde{\mathbf{p}},
\end{equation}
and  
\begin{align*}
    \dot{\bar{\mathbf{p}}}=&\underbrace{\frac{1}{N}\left(\omega^T\mathcal{A}\mathbf{p}+\omega^T\mathbf{q}-\gamma \omega^T \mathcal{L}(\mathbf{p}+\mathbf{y})\right)}_{[\mathbf{I}]}-\underbrace{\frac{1}{N}\omega^T\Phi^r(\mathbf{p})\mathbf{p}}_{[\mathbf{II}]}
\end{align*}
Noting that $\mathbf{[I]}$ coincides with  the expression of $\dot{\bar{\mathbf{p}}}$ appeared in the proof of Lemma \ref{lem: DLE Vectorization and simplification}, we work out only the terms in $[\mathbf{II}]$. 
\begin{align*}
    &[\mathbf{II}]\overset{\eqref{eqn: Inverse coord trans split}}{=} \frac{1}{N} \omega^T \Phi^r(\omega\bar{\mathbf{p}}+\mathcal{U}\tilde{\mathbf{p}})(\omega\bar{\mathbf{p}}+\mathcal{U}\tilde{\mathbf{p}})\\
    &\overset{\eqref{eqn: Linearity of Phi}}{=}\frac{1}{N} \omega^T\bigl(\Phi^r(\omega \bar{\mathbf{p}})\omega \bar{\mathbf{p}}+\Phi^r(\omega \bar{\mathbf{p}})\mathcal{U}\tilde{\mathbf{p}}+ \Phi^r(\mathcal{U}\tilde{\mathbf{p}})\omega \bar{\mathbf{p}}  \\ & \qquad+ \Phi^r(\mathcal{U}\tilde{\mathbf{p}})\mathcal{U}\tilde{\mathbf{p}}\bigl).
\end{align*}
By using \eqref{eqn: Definitions Kronecker},\eqref{eqn: Laplacian Properties Kronecker}, \eqref{eqn: Properties Kronecker 2}, and \eqref{eqn: Interchangeability of Phi}, we obtain
\[
[\mathbf{II}]=
\phi^r(\bar{\mathbf{p}})\bar{\mathbf{p}}+\frac{1}{N}\omega^T (\Phi^r(\mathcal{U}\tilde{\mathbf{p}})\mathcal{U}\tilde{\mathbf{p}}).
\]
Similarly,
\begin{align*}
    \dot{\tilde{\mathbf{p}}}&=\underbrace{\mathcal{U}^T\mathcal{A}\mathbf{p}+\mathcal{U}^T\mathbf{q}-(\gamma\mathcal{U}^T\mathcal{L}\mathbf{p}+\gamma\mathcal{U}^T\mathcal{L}\mathbf{y})}_{[\mathbf{III}]}-\underbrace{\mathcal{U}^T\Phi^r(\mathbf{p})\mathbf{p}}_{[\mathbf{IV}]}.
    \end{align*}
Noting again that $[\mathbf{III}]$ coincides with the expression of $\dot{\tilde{\mathbf{p}}}$ obtained in the proof of Lemma \ref{lem: DLE Vectorization and simplification},
we here focus on the terms in $[\mathbf{IV}]$.
    \begin{align*}
    &[\mathbf{IV}]\overset{\eqref{eqn: Inverse coord trans split}}{=}\mathcal{U}^T\Phi^r(\omega \bar{\mathbf{p}}+\mathcal{U}\tilde{\mathbf{p}})(\omega \bar{\mathbf{p}}+\mathcal{U}\tilde{\mathbf{p}})\\
    &\overset{\eqref{eqn: Linearity of Phi}}{=}\mathcal{U}^T(\Phi^r(\omega \bar{\mathbf{p}})\omega \bar{\mathbf{p}}+\Phi^r(\omega \bar{\mathbf{p}})\mathcal{U}\tilde{\mathbf{p}}+ \Phi^r(\mathcal{U}\tilde{\mathbf{p}})\omega \bar{\mathbf{p}}\\& \qquad + \Phi^r(\mathcal{U}\tilde{\mathbf{p}})\mathcal{U}\tilde{\mathbf{p}}).
\end{align*}
Using \eqref{eqn: Definitions Kronecker}, \eqref{eqn: Laplacian Properties Kronecker}, \eqref{eqn: Properties Kronecker 2}, and \eqref{eqn: Interchangeability of Phi}, and omitting routine intermediate steps for brevity, we obtain 
\begin{equation*}
[\mathbf{IV}]=\mathcal{U}^T\Phi^r(\mathcal{U}\tilde{\mathbf{p}})\mathcal{U}\tilde{\mathbf{p}}+ \left(I_{N-1} \otimes (\phi^r(\bar{\mathbf{p}})+\phi^{\ell}(\bar{\mathbf{p}}))\right)\tilde{\mathbf{p}}
\end{equation*} 
Therefore, we have
\begin{subequations}\label{eqn: Vectorized split dynamics for p DRE}
    \begin{align}
        \nonumber\dot{\bar{\mathbf{p}}}=&\bar{A}\bar{\mathbf{p}} + \frac{1}{N}\omega^T\mathcal{A}\mathcal{U}\tilde{\mathbf{p}}{N} + q -\phi^r(\bar{\mathbf{p}})\bar{\mathbf{p}}\\&-\frac{1}{N}\omega^T (\Phi^r(\mathcal{U}\tilde{\mathbf{p}})\mathcal{U}\tilde{\mathbf{p}}),\label{eqn: DRE p_bar dyn}\\
        \nonumber\dot{\tilde{\mathbf{p}}}=&( \mathcal{U}^T\mathcal{A}\mathcal{U} -\gamma \Lambda )\tilde{\mathbf{p}} + \mathcal{U}^T\mathcal{A}\omega \bar{\mathbf{p}} -\gamma \Lambda \tilde{\mathbf{y}} - \mathcal{U}^T\Phi^r(\mathcal{U}\tilde{\mathbf{p}})\mathcal{U}\tilde{\mathbf{p}} \\&- \left(I_{N-1} \otimes (\phi^r(\bar{\mathbf{p}})+\phi^{\ell}(\bar{\mathbf{p}}))\right)\tilde{\mathbf{p}}\label{eqn: DRE p_tilde dyn}.
    \end{align}
\end{subequations}
The equilibrium for the combined dynamics \eqref{eqn: Vectorized split dynamics for y DRE} and \eqref{eqn: Vectorized split dynamics for p DRE} is given by
\begin{align}\label{eqn: Equilibrium of split dynamics DRE}
    \begin{split}
        \tilde{\mathbf{p}}^*&=0,
        \bar{A}\bar{\mathbf{p}}^* + q -\phi^r(\bar{\mathbf{p}}^*)\bar{\mathbf{p}}^*=0,\\
        \tilde{\mathbf{y}}^*&=\frac{1}{\gamma}(\Lambda)^{-1}\mathcal{U}^T\mathcal{A}\omega\bar{\mathbf{p}}^*.
    \end{split}
\end{align}
By comparing \eqref{eqn: Equilibrium of split dynamics DRE} with the vectorized algebraic Riccati 
equation \eqref{eqn: ARE}, namely $\bar{A}p-\phi^r(p)p+q=0$ , $p:=\mathrm{vec}(P)$, we find that 
$\bar{\mathbf{p}}^*=\mathrm{vec}(P^*)$ where $P^*$ is the solution to the ARE \eqref{eqn: ARE}. 
We shift the equilibrium to the origin by defining
$(\xi_1, \xi_2, \xi_3, \xi_4):=(\bar{\mathbf{p}}-\bar{\mathbf{p}}^*, \tilde{\mathbf{p}}, \tilde{\mathbf{y}}-\tilde{\mathbf{y}}^*, \bar{\mathbf{y}})$.
Consequently,  \eqref{eqn: DRE p_bar dyn} in the shifted coordinates becomes
\begin{align*}
    &\dot{\xi}_1\overset{\eqref{eqn: Linearity of phi}}{=}\bar{A}\bar{\mathbf{p}}^*+\bar{A}\xi_1 -\phi^r(\bar{\mathbf{p}}^*)\bar{\mathbf{p}}^* - \phi^r(\bar{\mathbf{p}}^*)\xi_1 - \phi^r(\xi_1)\bar{\mathbf{p}}^* \\&-\phi^r(\xi_1)\xi_1 + \frac{1}{N}\omega^T\mathcal{A}\mathcal{U}\xi_2-\frac{1}{N}\omega^T (\Phi^r(\mathcal{U}\xi_2)\mathcal{U}\xi_2) + q.
\end{align*}
Using \eqref{eqn: Equilibrium of split dynamics DRE} and \eqref{eqn: Interchangeability of phi}, the expression simplifies to 
\begin{align*}
    \dot{\xi}_1=~~&\bar{A} \xi_1- \phi^r(\bar{\mathbf{p}}^*)\xi_1 - \phi^{\ell}(\bar{\mathbf{p}}^*)\xi_1 - \phi^r(\xi_1)\xi_1\\&\:+ \frac{1}{N}\omega^T\mathcal{A}\mathcal{U}\xi_2-\frac{1}{N}\omega^T (\Phi^r(\mathcal{U}\xi_2)\mathcal{U}\xi_2)\\
    =&\bar{A}_{cl}\xi_1 -\phi^r(\xi_1)\xi_1+ \frac{1}{N}\omega^T\mathcal{A}\mathcal{U}\xi_2\\&\:-\frac{1}{N}\omega^T (\Phi^r(\mathcal{U}\xi_2)\mathcal{U}\xi_2),
\end{align*}
where the second equality follows from the fact that $\bar{A}_{cl}=\bar{A}-\phi^r(\bar{\mathbf{p}}^*)-\phi^{\ell}(\bar{\mathbf{p}}^*)$, bearing in mind the definitions of $\phi^k$ and $\bar{A}_{cl}$ in \eqref{eqn: Abar_cl}.
Similarly,  \eqref{eqn: DRE p_tilde dyn} in the shifted coordinates takes the form 
\begin{align*}
    \dot{\xi}_2\overset{\eqref{eqn: Linearity of phi}}{=}&( \mathcal{U}^T\mathcal{A}\mathcal{U} -\gamma \Lambda )\xi_2 + \mathcal{U}^T\mathcal{A}\omega \bar{\mathbf{p}}^*+\mathcal{U}^T\mathcal{A}\omega \xi_1 \\&-\gamma \Lambda \tilde{\mathbf{y}}^* -\gamma \Lambda \xi_3 - \mathcal{U}^T\Phi^r(\mathcal{U}\xi_2)\mathcal{U}\xi_2 \\&- \left(I_{N-1} \otimes (\phi^r(\bar{\mathbf{p}}^*)+\phi^{\ell}(\bar{\mathbf{p}}^*))\right)\xi_2 \\&- \left(I_{N-1} \otimes (\phi^r(\xi_1)+\phi^{\ell}(\xi_1))\right)\xi_2.
\end{align*}
By \eqref{eqn: Equilibrium of split dynamics DRE} and \eqref{eqn: Laplacian Properties Kronecker}, this simplifies to
\begin{align*}
    \dot{\xi}_2&=(\mathcal{U}^T\mathcal{A}\mathcal{U} -\gamma \Lambda )\xi_2 +\mathcal{U}^T\mathcal{A}\omega \xi_1 -\gamma\Lambda \xi_3 \\&\quad-\left((U^T\otimes I_{n^2})(I_N \otimes (\phi^r(\bar{\mathbf{p}}^*)+\phi^{\ell}(\bar{\mathbf{p}}^*)))(U \otimes I_{n^2})\right)\\&\quad-\left(I_{N-1} \otimes (\phi^r(\xi_1)+\phi^{\ell}(\xi_1))\right)\xi_2- \mathcal{U}^T\Phi^r(\mathcal{U}\xi_2)\mathcal{U}\xi_2, 
\end{align*}
and subsequently to
\begin{align*}
    \dot \xi_2&=(\mathcal{U}^T(\mathcal{A}-I_N \otimes (\phi^r(\bar{\mathbf{p}}^*)+\phi^{\ell}(\bar{\mathbf{p}}^*)))\mathcal{U} -\gamma \Lambda )\xi_2\\&\quad +\mathcal{U}^T\mathcal{A}\omega \xi_1 -\gamma\Lambda \xi_3 -\mathcal{U}^T\Phi^r(\mathcal{U}\xi_2)\mathcal{U}\xi_2\\&\quad -\left(I_{N-1} \otimes (\phi^r(\xi_1)+\phi^{\ell}(\xi_1))\right)\xi_2\\
    &=(\mathcal{U}^T\mathcal{A}_{cl}\mathcal{U}-\gamma\Lambda)\xi_2+\mathcal{U}^T\mathcal{A}\omega \xi_1 -\gamma\Lambda \xi_3\\&\quad -\mathcal{U}^T\Phi^r(\mathcal{U}\xi_2)\mathcal{U}\xi_2-\left(I_{N-1} \otimes (\phi^r(\xi_1)+\phi^{\ell}(\xi_1))\right)\xi_2,
    \end{align*}
    where $\mathcal{A}_{cl}:=\mathrm{diag}(\bar{A}_{1,cl},\ldots,\bar{A}_{N,cl})$, $\bar{A}_{i,cl}:=I_n \otimes A^T_{i,cl}+ A^T_{i,cl}\otimes I_n$, and $A_{i,cl}:=A_i-DP^*$. Consequently, the dynamics of $\xi_1$, $\xi_2$, and $\xi_3$ can be written in a compact form as \eqref{eqn: Compact dynamics DRE},
where 
\begin{align*}
    &A_{12}:=\frac{1}{N}\omega^T\mathcal{A}\mathcal{U}, A_{21}:= \mathcal{U}^T\mathcal{A}\omega, A_{22}:=\mathcal{U}^T\mathcal{A}_{cl}\mathcal{U},\\ &A_{23}:=\Lambda, g_{21}(\xi_1):= I_{N-1} \otimes (\phi^r(\xi_1)+ \phi^{\ell}(\xi_1)),\\
    &g_{12}(\xi_2):= \frac{1}{N}\omega^T\Phi^r(\mathcal{U}\xi_2)\mathcal{U},\; g_{22}(\xi_2):= \mathcal{U}^T \Phi^r(\mathcal{U}\xi_2)\mathcal{U}. \rlap{$\blacksquare$}
\end{align*}
\end{document}